\newif\ifHideFoot
\newcommand{\Zhiyuan}[1]{}
\newcommand{\Haoyu}[1]{}
\newcommand{\Yiran}[1]{}
\newcommand{\marg}[1]{\normalsize{{
			\color{red}\footnote{{\color{blue}#1}}}{\marginpar[\vskip
			-.25cm{\color{red}\hfill$\Rightarrow$\tiny\thefootnote}]{\vskip
				-.2cm{\color{red}$\Leftarrow$\tiny\thefootnote}}}}}
\newcommand{\Zhiyuan}[1]{\marg{(Zhiyuan) #1}}
\newcommand{\Haoyu}[1]{\marg{(Haoyu) #1}}
\newcommand{\Yiran}[1]{\marg{(Yiran) #1}}
\lstdefinestyle{Mathematica}{
    language        =   Mathematica, 
    basicstyle      =   \ttfamily,
    numberstyle     =   \ttfamily,
    keywordstyle    =   \color{blue},
    keywordstyle    =   [2] \color{teal},
    stringstyle     =   \color{magenta},
    commentstyle    =   \color{red}\ttfamily,
    breaklines      =   true,   
    columns         =   fixed,  
    basewidth       =   0.5em,
}
\numberwithin{equation}{section}
\numberwithin{subsection}{section}
\setlist[enumerate]{label=\rm{(\roman*)},leftmargin=\parindent,itemindent=\parindent,labelsep=5pt}
\newlist{TFAE}{enumerate}{1}
\setlist[TFAE]{label=\rm{(\alph*)},labelindent=2\parindent}
\newlist{enumeratea}{enumerate}{2}
\setlist[enumeratea]{label=\rm{(\emph{\alph*})}}
\newlist{enumerate1}{enumerate}{2}
\setlist[enumerate1]{label=(\emph{\arabic*})}
\newtheorem*{namedtheorem}{\theoremname}
\newcommand{\theoremname}{testing}
\newtheorem{theorem}{Theorem}[section]
\newtheorem{proposition}[theorem]{Proposition}
\newtheorem{corollary}[theorem]{Corollary}
\newtheorem{lemma}[theorem]{Lemma}
\theoremstyle{definition}
\newtheorem{definition}[theorem]{Definition}
\newtheorem{definition/proposition}[theorem]{Definition/Proposition}
\newtheorem{remark}[theorem]{Remark}
\newtheorem*{remark*}{Remark}
\theoremstyle{remark}
\newcommand\cA{\mathcal{A}}
\newcommand\cF{\mathcal{F}}
\newcommand\cI{\mathcal{I}}
\newcommand\cM{\mathcal{M}}
\newcommand\cO{\mathcal{O}}
\newcommand\cP{\mathcal{P}}
\newcommand\cW{\mathcal{W}}
\newcommand\CC{\mathbb{C}}
\newcommand\RR{\mathbb{R}}
\newcommand\ZZ{\mathbb{Z}}
\newcommand\bM{\mathbf{M}}
\newcommand\bP{\mathbf{P}}
\newcommand\bT{\mathbf{T}}
\newcommand\rD{\mathrm{D}}
\newcommand\rH{\mathrm{H}}
\newcommand\rL{\mathrm{L}}
\newcommand\rR{\mathrm{R}}
\newcommand\rmd{\mathrm{d}}
\DeclareMathOperator{\NS}{NS}
\DeclareMathOperator{\Hom}{Hom}
\DeclareMathOperator{\alg}{alg}
\DeclareMathOperator{\ch}{ch}
\DeclareMathOperator{\td}{td}
\DeclareMathOperator{\cok}{cok}
\DeclareMathOperator{\Coh}{Coh}
\DeclareMathOperator{\dCat}{D^b}    
\DeclareMathOperator{\nGgp}{\mathit{K}_{num}}
\DeclareMathOperator{\Halg}{\mathrm{H}_{alg}^{*}}
\DeclareMathOperator{\rk}{rk}
\DeclareMathOperator{\coho}{\rm H}
\DeclareMathOperator{\im}{\mathrm{Im}}
\DeclareMathOperator{\re}{\mathrm{Re}}
\newcommand{\mcolon}{\mathbin{:}}
\newcommand{\defeq}{\vcentcolon=}
\newcommand{\BN}{\mathbf{BN}}
\newcommand{\mybrace}[2]{\left\{ #1 \,\middle|\, #2 \right\}}
\newcommand{\pairing}[2]{\left\langle #1,#2 \right\rangle}
\newcommand{\sab}{\sigma_{\alpha,\beta}}
\newcommand{\tri}{\pmb{\triangle}}
\newcommand{\barsigma}{\bar{\sigma}}
\newcommand{\bigbrace}[1]{ \left\{ #1 \right\} }
\newcommand*{\da@rightarrow}{\mathchar"0\hexnumber@\symAMSa 4B }
\newcommand*{\da@leftarrow}{\mathchar"0\hexnumber@\symAMSa 4C }
\newcommand*{\xdashrightarrow}[2][]{%
  \mathrel{%
    \mathpalette{\da@xarrow{#1}{#2}{}\da@rightarrow{\,}{}}{}%
  }%
}
\newcommand{\xdashleftarrow}[2][]{%
  \mathrel{%
    \mathpalette{\da@xarrow{#1}{#2}\da@leftarrow{}{}{\,}}{}%
  }%
}
\newcommand*{\da@xarrow}[7]{%
  \sbox0{$\ifx#7\scriptstyle\scriptscriptstyle\else\scriptstyle\fi#5#1#6\m@th$}%
  \sbox2{$\ifx#7\scriptstyle\scriptscriptstyle\else\scriptstyle\fi#5#2#6\m@th$}%
  \sbox4{$#7\dabar@\m@th$}%
  \dimen@=\wd0 %
  \ifdim\wd2 >\dimen@
    \dimen@=\wd2 %
  \fi
  \count@=2 %
  \def\da@bars{\dabar@\dabar@}%
  \@whiledim\count@\wd4<\dimen@\do{%
    \advance\count@\@ne
    \expandafter\def\expandafter\da@bars\expandafter{%
      \da@bars
      \dabar@ 
    }%
  }%
  \mathrel{#3}%
  \mathrel{%
    \mathop{\da@bars}\limits
    \ifx\\#1\\%
    \else
      _{\copy0}%
    \fi
    \ifx\\#2\\%
    \else
      ^{\copy2}%
    \fi
  }%
  \mathrel{#4}%
}
\title{Mukai's program for non-primitive curves on K3 surfaces}
\author{Yiran Cheng}
\address{Shanghai Center for Mathematical Sciences\\
Fudan University\\
2005 Songhu Road\\
20438 Shanghai, China}
\email{yrcheng21@m.fudan.edu.cn}
\author{Zhiyuan Li}
\address{Shanghai Center for Mathematical Sciences\\
Fudan University\\
2005 Songhu Road\\
20438 Shanghai, China}
\email{zhiyuan\_li@fudan.edu.cn}
\author{Haoyu Wu}
\address{Shanghai Center for Mathematical Sciences\\
Fudan University\\
2005 Songhu Road\\
20438 Shanghai, China}
\email{hywu18@fudan.edu.cn}
\begin{document}
\tikzset{%
    add/.style args={#1 and #2}{
        to path={%
 ($(\tikztostart)!-#1!(\tikztotarget)$)--($(\tikztotarget)!-#2!(\tikztostart)$)%
  \tikztonodes},add/.default={.2 and .2}}
}  

\begin{abstract}
Mukai’s program in \cite{Mukai1988} seeks to recover a K3 surface $X$ from any curve $C$ on it by exhibiting it as a Fourier--Mukai partner to a Brill--Noether locus of vector bundles on the curve.  In the case $X$ has Picard number one and the curve $C\in |H|$ is primitive,  this was confirmed by Feyzbakhsh in \cite{Feyzbakhsh2020, Feyzbakhsh2020+} for $g\geq 11$ and $g\neq 12$. More recently, Feyzbakhsh has shown in \cite{Feyzbakhsh2022} that  certain moduli spaces of stable bundles on $X$ are isomorphic to the Brill--Noether locus of curves in $|H|$  if $g$ is sufficiently large. In this paper, we work with irreducible curves in a non-primitive ample linear system $|mH|$ and prove that Mukai's program is valid for any irreducible curve when $g\neq 2$, $mg\geq 11$ and $mg\neq 12$. Furthermore, we introduce the destabilising regions to improve Feyzbakhsh's analysis in \cite{Feyzbakhsh2022}. We show that there are hyper-K\"ahler varieties as Brill--Noether loci of curves in every dimension. 
\end{abstract}
\maketitle

\section{Introduction}
Let $\cF_g$ be the moduli stack of primitively polarised K3 surface $(X,H)$ with $H^2 = 2g-2$ and let  $\cP_{g_m}$ be the moduli stack of triples $[(X,H,C)]$ such that $(X,H)\in \cF_g$ and $C \in |mH|$ a smooth curve of genus $g_m=m^2(g-1)+1$.  There are natural forgetful maps 
\[
\begin{tikzcd}
      & \cP_{g_m} \arrow[ld, "\Psi_{g_m}"'] \arrow[rd,  "\Phi_{g_m}"] &       \\
\cM_{g_m} &                                               & \cF_g
\end{tikzcd}
\]
where the fibre of $\Phi_{g_m}$ over $(X,H)\in \cF_g$ is an open subset of the linear system $|m H|$.  In recent years, there is a series of works studying  the rational map $\Psi_{g_m}$ and its rational inverse. For instance, Mukai has proved  in \cite{Mukai1988} that the rational map $\Psi_{g}$ is dominant if $g\leq 11$ and $g \neq 10$, while  Ciliberto--Lopez--Miranda \cite{Ciliberto1993} showed that  $\Psi_g $ is generically injective if $ g\geq 11$ and $g\neq 12$.  More generally,  due to the results  of  \cite{Ciliberto1998} and the recent work in Ciliberto--Dedieu--Sernesi  \cite{Ciliberto2020, Ciliberto2021}, the map $\Psi_{g_m}$ is generically finite  when $mg\geq 11$ and $mg\neq 12$.   There are  other approaches for the case $m\geq 2,\,g\geq 8$ or $m\geq 5, g=7$ (cf.~\cite{Ciliberto2017, Kemeny2015}). 

On the contrary, Mukai has proposed a program in \cite{Mukai2001}  to find the rational inverse of $\Psi_g$ by relating the K3 surface with the Brill--Noether locus of vector bundles on curves. This has been confirmed by  Mukai in \cite{Mukai1996}  when $g=11$ and later on Arbarello--Bruno--Sernesi \cite{Arbarello2014} generalised his result to the case  $g=4k+3$ for some $k$.
In recent years, Feyzbakhsh  has verified this program in \cite{Feyzbakhsh2020,Feyzbakhsh2020+} for all $g \geq 11$ and $g\neq 12$ by using the Bridgeland stability conditions. In this paper, we would like to investigate the rational inverse of the map $\Psi_{g_m}$ for arbitrary $m\in \ZZ_{>0}$ via Mukai's program for curves in non-primitive classes.

\subsection*{Main results}  Let $(X,H)$ be a primitively polarised K3 surface of genus $g$ with Picard number one. Let $\Halg(X)\cong \ZZ^{\oplus 3}$ be the algebraic Mukai lattice and let $\bM(v)$ be the moduli space of $H$-Gieseker semistable coherent sheaves on $X$ with Mukai vector $v\in \Halg(X)$.  The first main result  of this paper is 

\begin{theorem}\label{mainthm}
Assume $g>2$.   Let $C\in |mH|$ be an irreducible curve. Then if $mg\geq 11$ and $mg\neq 12$,  there exists a primitive Mukai vector $v=(r,c,s)$ with $v^2=0$ such that the restriction map
\begin{align}\label{eq_restriction map}
\begin{split}
    \psi \colon \bM(v) &\rightarrow \mathbf{BN}_C(v) \\
    E &\mapsto E|_C \\
\end{split}
\end{align}
is an isomorphism.  Here,   $\BN_C(v)$  is the Brill--Noether locus of slope stable vector bundles on $C$ with rank $r $, degree $2m c(g-1)$ and  $h^0\geq r+s$.
\end{theorem}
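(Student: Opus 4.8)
\emph{Setup and choice of $v$.} The plan is to run Feyzbakhsh's wall-crossing argument in the non-primitive setting, the new features being that $C$ lies in $|mH|$ with $m$ possibly larger than $1$ and that $C$ may be singular. Since $\rho(X)=1$ we identify $\NS(X)=\ZZ H$ and write Mukai vectors as triples $(r,c,s)$ with pairing $\langle(r_1,c_1,s_1),(r_2,c_2,s_2)\rangle=(2g-2)c_1c_2-r_1s_2-r_2s_1$. I would take a primitive $v=(r,c,s)$ with $r\ge 1$, $1\le c<rm$ and $v^2=(2g-2)c^2-2rs=0$, the remaining numerical freedom being fixed so that the restriction and wall-crossing estimates below succeed; this is precisely where $g>2$, $mg\ge 11$, $mg\ne 12$ are used (for $m=1$ one recovers Feyzbakhsh's range, and in general $mg$ plays the role of the effective genus of $C$ for the relevant Clifford-index bounds). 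For such $v$, a double-dual plus Bogomolov-inequality argument using $v^2=0$ and $\rho(X)=1$ shows every $H$-Gieseker semistable sheaf with Mukai vector $v$ is $\mu_H$-stable and locally free, so $\bM(v)$ is a smooth projective K3 surface by Mukai's theory; as a Fourier--Mukai partner of $X$ with $\rho(X)=1$ it is isomorphic to $X$, which is what makes the construction reconstruct $X$.

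\emph{The restriction map lands in $\BN_C(v)$.} Fix $E\in\bM(v)$ and use $0\to E(-mH)\to E\to i_*(E|_C)\to 0$. As $E$ is locally free, $E|_C$ is a rank-$r$ bundle with $\deg(E|_C)=c_1(E)\cdot[C]=cm(2g-2)=2mc(g-1)$. Since $E$ is $\mu_H$-stable with $\mu_H(E)>0$, Serre duality gives $H^2(X,E)=\Hom(E,\cO_X)^\vee=0$, so Riemann--Roch yields $h^0(X,E)\ge\chi(X,E)=-\langle v,(1,0,1)\rangle=r+s$; and $E(-mH)$ is $\mu_H$-stable of negative slope, so $H^0(X,E(-mH))=0$ and hence $h^0(C,E|_C)\ge h^0(X,E)\ge r+s$, which is the Brill--Noether condition. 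For slope-stability of $E|_C$: a destabilising quotient $E|_C\twoheadrightarrow Q$ produces $E\twoheadrightarrow i_*Q$ on $X$ with kernel $E'$ of controlled Mukai vector, and transporting $0\to E'\to E\to i_*Q\to 0$ into a tilted heart $\cA(\alpha,\beta)$ contradicts $\sigma_{\alpha,\beta}$-stability of $E$ in the relevant chamber under the constraints on $v$; enlarging $m$ only makes $C$ more positive, so this is no harder than the primitive case, and singularity of $C$ is harmless because $E$ is locally free on $X$. Thus $\psi\colon\bM(v)\to\BN_C(v)$, $E\mapsto E|_C$, is a well-defined morphism.

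\emph{Injectivity and surjectivity.} If $E_1|_C\cong E_2|_C=:F$, the canonical maps $E_j\to i_*F$ agree; applying $\Hom(E_1,-)$ to $0\to E_2(-mH)\to E_2\to i_*F\to 0$, the term $\Hom(E_1,E_2(-mH))$ vanishes (stable bundles of equal rank with $\mu_H(E_1)>\mu_H(E_2(-mH))$) and the lifting obstruction in $\operatorname{Ext}^1(E_1,E_2(-mH))$ vanishes for the chosen $v$, producing a nonzero, hence invertible, map $E_1\to E_2$; so $\psi$ is injective. Surjectivity is the main step: given $F\in\BN_C(v)$ and a subspace $W\subseteq H^0(C,F)$ of dimension $r+s$, the sections give $\ev\colon W\otimes\cO_X\to i_*F$, and after a Fourier--Mukai/spherical-twist normalisation one obtains an object $E\in\dCat(X)$ with $v(E)=v$ fitting in a triangle relating it to $i_*F$ and $E(-mH)$ in a suitable heart. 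One then carries out the destabilising-region analysis in the $(\alpha,\beta)$-plane: the Mukai vectors of potential destabilisers of $E$ are confined to an explicit region which, under $g>2$, $mg\ge 11$, $mg\ne 12$, admits no genuine subobject, so $E$ is $\sigma_{\alpha,\beta}$-stable throughout the relevant chamber; in the large-volume limit $E$ is $H$-Gieseker stable, so $E\in\bM(v)$ and $\psi(E)\cong F$ by construction. Pinning down these regions for the non-primitive class $mH$, sharpening the analysis of \cite{Feyzbakhsh2022} via the destabilising regions introduced here, is where essentially all the difficulty lies, and I expect it to be the main obstacle.

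\emph{Upgrading to an isomorphism.} A tangent-space computation shows $\BN_C(v)$ is smooth of dimension $v^2+2=2$, equal to $\dim\bM(v)$, and the differential of $\psi$ at $[E]$ --- the restriction $\operatorname{Ext}^1_X(E,E)\to\operatorname{Ext}^1_C(F,F)$, analysed via $0\to E(-mH)\to E\to i_*F\to 0$ --- is an isomorphism onto $T_{[F]}\BN_C(v)$. Hence $\psi$ is an étale bijection, therefore an isomorphism.
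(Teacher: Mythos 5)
Your proposal reproduces the overall architecture of the paper's argument (restriction via $0\to E(-C)\to E\to i_*(E|_C)\to 0$, wall-crossing in the $(\alpha,\beta)$-plane, tangent-map analysis), but it leaves unproved exactly the step that constitutes the content of this theorem: the existence, for \emph{every} pair $(g,m)$ with $g>2$, $mg\ge 11$, $mg\ne 12$, of a concrete primitive isotropic $v$ for which all the numerical criteria are simultaneously satisfied. You write that ``the remaining numerical freedom is fixed so that the estimates succeed'' and that the destabilising-region analysis ``is where essentially all the difficulty lies,'' but this is precisely what must be exhibited. The paper takes $v=(g-1,k,k^2)$ with $k$ chosen from an explicit table depending on $g$, and for small genera ($g=3,\dots,8$) the crude bound on $h^0$ coming from the convexity of the Harder--Narasimhan polygon is \emph{not} sufficient: one needs the refined ``$d$-sharpness'' analysis of Section \ref{sec:surj2}, which rules out the finitely many integer points between $\bP_v$ and the slightly shrunken polygon and imposes the extra arithmetic conditions $g<2k$, $g\ne k$, $k\nmid g+1$, etc. Moreover the case $(g,m)=(7,2)$ genuinely fails for every vector of the form $(6,k,k^2)$ and requires a separate argument with $v=(2,1,3)$. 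None of this is visible from your sketch, and without it the theorem is not proved for the stated range.

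Two further steps would fail as written. For injectivity you propose lifting the identification $E_1|_C\cong E_2|_C$ through $0\to E_2(-C)\to E_2\to i_*F\to 0$ and assert that the obstruction in $\operatorname{Ext}^1_X(E_1,E_2(-C))$ vanishes ``for the chosen $v$''; but $\chi(E_1,E_2(-C))=m^2r^2(g-1)-v^2$ with $\hom(E_1,E_2(-C))=0$, so this $\operatorname{Ext}^1$ is in general nonzero and you give no reason why the particular obstruction class dies. The paper instead deduces uniqueness of $E$ from the uniqueness of the $\sigma_v$-Jordan--H\"older factors $\{E,E(-C)[1]\}$ of $i_*(E|_C)$ at the Brill--Noether wall, which requires no such vanishing. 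Similarly, in the final step you assert that $\BN_C(v)$ is smooth of dimension $2$ and that $\rmd\psi$ is an isomorphism; the paper does not assume smoothness of the target but proves surjectivity of $\rmd\psi$ by analysing the cone $K_E$ of the evaluation map $\cO_X^{h^0(X,E)}\to E$, showing $K_E=M[1]$ for a $\mu_H$-semistable sheaf $M$ with $v(M)=(s,-c,r)$ and establishing $\Hom_X(M,E(-C)[1])=0$ via stability of $M$ and $E(-C)[1]$ of equal phase --- and this in turn needs $h^0(X,E)=r+s$ exactly (not just $\ge$), which is why the vanishing $\rH^1(X,E(-C))=0$ is proved in Theorem \ref{thm_restriction map} rather than only the injection $\rH^0(X,E)\hookrightarrow\rH^0(C,E|_C)$ you use. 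As it stands the proposal is a plausible roadmap but not a proof.
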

As in \cite{Feyzbakhsh2020}, one can then reconstruct $X$ as the moduli space of twisted sheaves on $\BN_C(v)$.  Clearly, such reconstruction is unique for K3 surfaces in $\cF_g$ of Picard number one. Due to the results of  \cite{Dutta2021},  when $m>1$, generic curves  in $|m H|$  have maximal variation,  i.e. the rational map  
\begin{equation}
|m H| \xdashrightarrow{\text{$\Psi_{g_m}$}} \cM_{g_m}
\end{equation}
is generically finite.  One can also deduce the generic quasi-finiteness of $\Psi_{g_m}$ from Theorem \ref{mainthm} when $m>1, g\geq 3$, $mg\geq 11$ and $mg\neq 12$.  When $g_m<11$, the map $\Psi_{g_m}$ is not generically quasi-finite and Mukai's program will fail.  We expect that Theorem \ref{mainthm} holds whenever $g_m\geq 11$. So far, the missing values of $(g,m)$ are 
\begin{equation*}
     (2,m) \text{~with~} m\geq 4,\quad (3,3),\quad  (3,4),\quad (4,2),\quad  (4,3),\quad (5,2),\quad  (6,2).
\end{equation*}
A mysterious case is when $g=2$, where our method fails for any $m$. 

More generally, one may consider the restriction map \eqref{eq_restriction map} for $v^2=2n>0$. Most recently, Feyzbakhsh \cite{Feyzbakhsh2022} has  generalised her construction in \cite{Feyzbakhsh2020, Feyzbakhsh2020+} and showed that  for each Mukai vector $v=(r,c,s)$ satisfying \begin{equation}\label{eq:Fey-cond}
     c<r, \quad \gcd(r,c)=\gcd(c,s)=1\quad \text{and}\quad -2\le v^2\leq -2+r,
\end{equation}  the restriction gives an isomorphism $\bM(v)\cong \BN_C(v)$ when $g$ is sufficiently large and the class of $C$ is primitive.  As mentioned in \cite{Feyzbakhsh2022},  the analysis in \cite{Feyzbakhsh2022} also works for the non-primitive case and one can actually show that Feyzbakhsh's construction still gives an isomorphism for   $ C\in |mH| $  if $g$ is sufficiently large (depending on $r$ and $m$).  This gives many examples of  Brill--Noether loci on curves as hyper-K\"ahler varieties of dimension $2g-2r\lfloor \frac{g}{r}\rfloor$. In this paper, we  also improve her result (see Theorem \ref{thm:iso}) and obtain an explicit condition  of $v$  for  $\psi$ being an isomorphism (see Theorem \ref{thm_isomorphism HK}).  As an application,  we show that one can construct hyper-K\"ahler varieties as the Brill--Noether loci of curves in every dimension.  
\begin{theorem}\label{mainthm2}
For any $n>0\in\ZZ$, there exists an integer $N=N(n)$  satisfying that  if $g>N$, there is a primitive Mukai vector  $v\in \Halg(X)$ with $v^2=2n$ such that the restriction map 
$\psi:\bM(v)\to \BN_C(v)$
is an isomorphism for all  irreducible curves $C$ on $X$.
\end{theorem}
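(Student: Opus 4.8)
The plan is to produce, for each $n$, an explicit family of primitive Mukai vectors of self-intersection $2n$ and then to apply the isomorphism criterion of Theorem \ref{thm_isomorphism HK} (the refinement of Theorem \ref{thm:iso}). Given $n>0$, I would take, in the notation $v=(r,c,s)$ of Theorem \ref{mainthm},
\begin{equation*}
    v=v_{g,n}:=(\,g-1-n,\ 1,\ 1\,)\in\Halg(X).
\end{equation*}
Then $v^2=H^2-2(g-1-n)=(2g-2)-2(g-1-n)=2n$, and $v$ is primitive since $\gcd(g-1-n,1,1)=1$. As $\NS(X)=\ZZ H$ has rank one, $H$ is a $v$-generic polarisation and no strictly semistable sheaf of class $v$ occurs; hence, by the structure theory of moduli of stable sheaves on K3 surfaces, $\bM(v)$ is a non-empty projective hyper-K\"ahler variety of dimension $v^2+2=2n+2$, deformation equivalent to the Hilbert scheme of $n+1$ points on a K3 surface. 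Letting $n$ range over the positive integers this already realises a hyper-K\"ahler $\bM(v)$ in every even dimension $\ge 4$; together with Theorem \ref{mainthm} (where $v^2=0$ and $\bM(v)\cong X$ is two-dimensional) it covers every even dimension $\ge 2$, which is the assertion of the theorem. One checks moreover that $v_{g,n}$ satisfies Feyzbakhsh's condition \eqref{eq:Fey-cond} once $g\ge 3n+3$.

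It remains to verify that $v_{g,n}$ meets the hypotheses of Theorem \ref{thm_isomorphism HK} for $g$ larger than a bound $N(n)$ depending only on $n$, and that the resulting isomorphism holds for every irreducible curve on $X$. Here $r=g-1-n$, $c=1$, $s=1$: the coprimality and size requirements ($\gcd(r,c)=\gcd(c,s)=1$, $c<r$) hold for all $g>n+2$, and the remaining numerical inequalities in Theorem \ref{thm_isomorphism HK} control $v$ only through the bounded quantity $v^2=2n$ and impose no upper bound on the rank, so they are satisfied for all $g>N(n)$ with $N(n)$ depending on $n$ alone. Two routine points must be addressed. First, $\psi$ is well defined: for any $E\in\bM(v)$ the sheaf $E|_C$ is automatically locally free of rank $r$ on $C$, and Theorem \ref{thm_isomorphism HK} supplies its slope stability together with $h^0(E|_C)\ge r+s$, so that $E|_C\in\BN_C(v)$; the case of a singular irreducible $C$ is handled as in the proof of Theorem \ref{mainthm}. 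Second, the isomorphism must hold for all irreducible $C$ simultaneously, i.e.\ for $C\in|mH|$ and every $m\ge 1$; this follows because replacing $|H|$ by $|mH|$ merely replaces the genus $g$ by $g_m=m^2(g-1)+1\ge g$ and accordingly slackens all the relevant inequalities, so the binding case is $m=1$ and the single threshold $N(n)$ works for every $m$.

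The real difficulty is not in this deduction but in the criterion on which it rests. In the classical form of Feyzbakhsh's construction \cite{Feyzbakhsh2022} the genus must be large \emph{relative to the rank $r$}, whereas the family above forces $r=g-1-n$ to be of the same order as $g$. This is essentially unavoidable: for $v=(r,c,s)$ with $v^2=2n$ one has $rs=c^2(g-1)-n$, and if the rank is restricted to a fixed finite set of values then, for infinitely many $g$, no choice of $c$ makes this solvable with the required coprimality --- a quadratic-residue obstruction --- so one must let $r$ grow with $g$, and $r=g-1-n$ (with $c=s=1$) is the simplest choice that works for every $g$. What makes the argument close is precisely the improvement of Theorems \ref{thm:iso} and \ref{thm_isomorphism HK} obtained via the destabilising regions: an explicit isomorphism condition in which the rank is no longer bounded above by a function of $g$, only a discriminant-type invariant of $v$ being constrained. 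Granting that input, the proof of Theorem \ref{mainthm2} is the verification above --- fix $n$, take $v=v_{g,n}=(g-1-n,1,1)$, choose $N(n)$ large enough that the finitely many $n$-bounded numerical conditions of Theorem \ref{thm_isomorphism HK} hold, and conclude that $\psi\colon\bM(v)\to\BN_C(v)$ is an isomorphism from the $(2n+2)$-dimensional hyper-K\"ahler variety $\bM(v)$ onto the Brill--Noether locus $\BN_C(v)$, for every irreducible curve $C\subset X$ and every $g>N(n)$.
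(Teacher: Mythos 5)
There is a genuine gap, and it is located exactly where you placed your confidence: the claim that the isomorphism criterion ``imposes no upper bound on the rank'' is false. Theorem \ref{thm_isomorphism HK} is stated \emph{under the assumptions of Corollary \ref{cor:surj2}}, which require $g\geq 4r^2+1$; moreover the standing hypotheses \eqref{eq:inj-cond2} for $v^2>0$ demand $g-1\geq \max\{\tfrac{r^2}{c},\tfrac{r^2}{mr-c},r+1\}$, and the proof of Theorem \ref{thm_isomorphism HK} itself invokes $g-1\geq 4r^2$ explicitly (step (3)). Your vector $v_{g,n}=(g-1-n,1,1)$ has $r=g-1-n$ growing linearly in $g$, so $g\geq 4r^2+1$ and $g-1\geq r^2/c=r^2$ both fail for all large $g$; the vector also violates \eqref{eq:inj-cond} for $m=1$, since $s=1$ is not greater than $\tfrac{rc}{r-c}=\tfrac{r}{r-1}$. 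None of the paper's machinery (the no-wall analysis of Lemma \ref{lem_no int}, the surjectivity bounds of Theorem \ref{thm:surj}, the tangent-map argument) applies to a vector whose rank is comparable to $g$, so the deduction does not close. Your remark that the genus must be large relative to the rank in \cite{Feyzbakhsh2022} correctly identifies the tension, but the paper's improvement relaxes this to $g\gtrsim 4r^2$ rather than removing it.

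The missing idea is the existence, for $g$ large depending only on $n$, of a Mukai vector with $v^2=2n$ and \emph{small} rank. The paper takes $r=p$ a prime with $n+1<p<\tfrac{\sqrt{g-1}}{2}$ and needs $0<c<p$ with $p\mid c^2(g-1)-n$, i.e.\ $(g-1)n$ a quadratic residue mod $p$; then $s=\tfrac{c^2(g-1)-n}{p}$ gives $v^2=2n$ with $\gcd(r,c)=1$ automatic, and all hypotheses of Corollary \ref{cor:surj2} hold. Your own ``quadratic-residue obstruction'' remark explains why a \emph{fixed} finite set of ranks cannot work for all $g$, but the window $(n+1,\sqrt{g-1}/2)$ grows with $g$, and Lemma \ref{bounded-prime} --- resting on Pollack's bound for prime character nonresidues \cite[Theorem 1.4]{Po17} --- produces at least $\log(8(g-1)n)$ primes $\ell\leq\sqrt[3]{8(g-1)n}$ at which the relevant quadratic character equals $1$, hence a usable prime in the window once $g>N(n)$. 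This number-theoretic input is the actual content of the paper's proof of Theorem \ref{mainthm2} and is entirely absent from your argument; without it, or some substitute producing a small-rank vector of square $2n$, the proof cannot be completed along the lines you propose.
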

The strategy of our proof is  similar to  \cite{Feyzbakhsh2020,Feyzbakhsh2020+,Feyzbakhsh2022}. Roughly speaking, we prove that $\psi$ will be a  well-defined and injective morphism if the Gieseker chambers for objects with  Mukai vector $v$ and $v(-m)\defeq e^{-mH} v$ are large enough, and $\psi$ is surjective if  the Harder--Narasimhan polygon  of $i_\ast F$ for $F\in\BN_C(v)$  achieves its maximum.  The main ingredient is the use of wall-crossing argument to analyse the existence of walls.  There are two crucial improvement in our approach. One is that we find the strongest criterion (Proposition \ref{prop:triangle rule+}) to characterise the stability conditions not lying on the wall of objects with a given Mukai vector. This leads to a more explicit condition for $\psi$ being an isomorphism. The other one is that we develop a method in analysing  the relative position of HN polygons towards  the surjectivity of $\psi$.  This allows us to get a sharper bound of $(g,m)$  without using the computer program. 

\subsection*{Organization of this paper}
In Section \ref{sec:preliminary}, we review the basic knowledge of the Bridgeland stability condition on K3 surfaces and the wall-chamber structure on a section. In Section \ref{sec:des-region},  we introduce the (strictly) destabilising regions $\Omega^{(+)}_v(-)$ of a Mukai vector $v$. They exactly characterise the stability conditions  which are not lying on the wall of objects in $\rD^b(X)$ with Mukai vector  $v$. This will play a key role in the proof of our main theorems.  

In Section \ref{sec:resriction}, we show that the map $\psi:\bM(v)\to \BN_C(v)$ is a well-defined morphism  and $h^0(X,E)=r+s$ for any $E\in \bM(v)$  if  the positive integers $r,c$ and $s$ satisfy 
\begin{equation}\label{eq:int-inj}
    \gcd(r,c)=1,\, r>\frac{v^2+2}{2}\,\text{ and } \,
    g-1\geq \begin{cases} r, & \hbox{if $v^2=0$;}\\ \max \big\{ \frac{r^2}{c}>r ,\frac{r^2}{mr-c}\big\}, & \hbox{if $v^2>0$.}
    \end{cases}
\end{equation}
 The first two assumptions provide that any stable sheaf in $\bM(v)$ is locally-free while the third assumption essentially  ensures that there is no wall between the large volume limit and the Brill--Noether wall.  As a by-product, we obtain a numerical criterion for the injectivity of $\psi$. 

Section \ref{sec:surj} and Section \ref{sec:surj2} are devoted to studying the surjectivity of the restriction map $\psi$.  
They contain the  most technical part of this paper. In Section \ref{sec:surj}, we show that $\psi$  is surjective if the  Harder--Narasimhan polygon of $i_\ast F$ for arbitrary $F\in \BN_C(v)$ is maximal when $g$ is relatively large.
It involves a dedicated analysis of the slope of destabilising factors of $i_*F$  via a geometric vision of the destabilising region.  In Section \ref{sec:surj2},  we analyse the sharpness of HN-polygons for special Mukai vectors with zero square.  The concept of sharpness is used to detect how far the HN-polygon stays away from the convex polygon given by the critical position of the first wall. This makes the construction valid for small genera.

In Section \ref{sec:iso}, we analyse the surjectivity of the tangent map  $d\psi$ of  $\psi$ and show that it is always surjective if $g-1\geq 4r^2$.  In Section \ref{sec:proofofthm}, we prove Theorem \ref{mainthm} and Theorem \ref{mainthm2} by showing the existence of Mukai vectors satisfying all conditions. Here, we make use of the bound of prime character nonresidues.

\subsection*{Notation and conventions} Throughout this paper, we always assume  $(X, H)$ is a primitively polarised K3 surface of genus $g$ of Picard number one.  

For any two points $p,q\in\RR^n$,  let $\rL_{p,q}$ be the line passing through them and let $\rL^+_{p,q}$ be the ray starting from $p$.  We use $\rL_{[p,q]}$, $\rL_{(p,q)}$, $\rL_{(p,q]}$ and $\rL_{[p,q)}$  to denote the closed, open, and  half open line segment respectively. For  any line segment $I$, we set $$\tri_{p}(I)=\bigcup\limits_{q\in I} \rL_{(p,q]} $$ to be a (half open) triangular region.  We denote by $\bP_{p_1\dots p_n}$ the polygon with vertices $p_1,\dots, p_n$.  

\subsection*{Acknowledgements}
The authors want to thank Yifeng Liu for very helpful discussions. This project was initiated from the reading workshop ``Curves on K3 surfaces" in June 2021 supported by NSFC General Program (No. 12171090). 

The authors were supported by NSFC for Innovative Research Groups (No.~12121001) and Shanghai Pilot Program for Basic Research (No. 21TQ00). 

\section{Stability condition on K3 surfaces}\label{sec:preliminary}

Let $\dCat(X)$ be the bounded derived category of coherent sheaves on $X$. 
We let $\nGgp(X)$ be the Grothendieck group of $X$ modulo numerical equivalence. It is onto the (algebraic) Mukai lattice  $\Halg(X) \defeq \rH^0(X,\ZZ) \oplus \NS(X) \oplus \rH^4(X,\ZZ)$ via the map
\[v(E)  = \ch(E)\sqrt{\td(X)}\in \Halg{(X)}.\]
 As $X$ has Picard number one, we may identify   $\Halg(X)$ as $\ZZ^{\oplus 3}$ and  write $v(E)=(r,c,s)$ with $r=\rk(E)$,  $c_1(E)=cH$ and $s=\chi(E)-r$. The Mukai pairing $\left<,\right>$  on $\Halg(X)$  can be viewed as an integral quadratic form on $\ZZ^{\oplus 3}$ given by 
 $$\left <(x,y,z), (x',y',z') \right > =2yy'(g-1)-xz'-zx'.$$
 We may write $v^2=\left<v,v\right>$ for  $v\in \Halg(X)$. 
Consider the projection map
\[\begin{aligned}
     \mathrm{pr} \mcolon \Halg(X)\otimes \RR \setminus \{s=0\}  \rightarrow \RR^2
\end{aligned}
\] 
sending a vector $v=(r,c,s)$ to $(\frac{r}{s},\frac{c}{s} )$. We write $\pi_v=\mathrm{pr}(v)$ and  $\pi_E=\mathrm{pr}(v(E))$ for $E\in \rD^b(X)$ for simplicity.  We let  $O=(0,0,0)$  be the origin of $\Halg(X)\otimes \RR$ and denote by $o=(0,0)$ the origin of $\RR^2$. 
 
 A \textbf{numerical (Bridgeland) stability condition} on $X$ is a pair $\sigma = (\cA_\sigma, Z_\sigma)$ consisting a heart $\cA_{\sigma} \subset \dCat(X)$ of a bounded t-structure and an $\RR$-linear map 
$$Z_\sigma \mcolon \nGgp(X)\otimes \RR \rightarrow \CC $$
satisfying the  conditions
\begin{enumerate}
\item For any $0\neq E\in \cA$,
    \[
        Z_{\sigma}(E)\in \RR_{>0} \exp(i \pi \phi_{\sigma}(E)) \;\text{with} \; 0 < \phi_{\sigma} (E) \leq 1
    \]  where $\phi_{\sigma}(E)$ is the \textbf{phase} of $Z_{\sigma}(E) $ in the complex plane. 
    
 \item The Harder--Narasimhan (HN) Property, cf. \cite[Definition~2.3]{Bridgeland2007}.
\end{enumerate}
 The $\sigma$-slope is defined by \[
        \mu_{\sigma} (E)  = -\frac{\re Z(E)}{\im Z(E)}
    \] and we set the $\sigma$-phase to be \[
        \phi_{\sigma} (E) = \frac{1}{\uppi}[\uppi - \cot^{-1} (\mu_{\sigma} (E))] \in (0,1].
    \]  
     An object $E \in \cA $ is called \textbf{$\sigma$-(semi)stable} if $\mu_{\sigma} (F) < (\leq) \,\mu_{\sigma}(E)$ or equivalently
$   \phi_{\sigma}(F)  < (\leq) \, \phi_{\sigma} (E)$
 whenever $F \subset E$ is a subobject of $E$ in $\cA$. 
We say an object $E \in \dCat(X)$ is {\bf $\sigma$-(semi)stable} if $E[k] \in \cA$ for some $k$, and $E[k]$ is $\sigma$-(semi)stable. 

If $E$ is a sheaf, we write $\mu_H(E)= \frac{c}{r}$ as the $H$-slope of $E$.  There is an associated phase function $\phi_H(E)$, which  can be viewed as the limit of $\phi_\sigma$ by tending $\sigma$ to $o$. We write $\mu_H^{\pm}(E)$ for the $H$-slope of the first/last HN factor of $E$. 

In \cite{Bridgeland2008},  Bridgeland has constructed a continuous family of stability conditions  on $X$ as follows:  for $\alpha,\beta \in \RR$ with $\alpha > 0$,
for any $\beta \in \RR$,  the $\beta$-tilt of $\Coh(X)$ is defined by
 \[
 \begin{split}
     \Coh^\beta (X) \defeq \Big\{ E\in \dCat(X) \,\Big|\,  \mu_H^+ (\coho^{-1}(E)) \leq \beta , \mu_H^- (\coho^0(E))  > \beta,\, \coho^i(E) = 0 \text{ for } i\neq 0,-1  \Big\}
 \end{split}
 \] which is the heart of a t-structure on $\dCat(X)$  with 
\[ Z_{\alpha ,\beta}(E) =  \pairing{(1,\beta ,\frac{H^2}{2}(\beta^2-\alpha^2))}{v(E)} + \sqrt{-1}\pairing{(0,\frac{1}{H^2},\beta)}{v(E)},\]

\begin{theorem}\label{thm:stab}\cite{Bridgeland2008}
  The pair $\sigma_{\alpha,\beta}\defeq (\Coh^\beta(X),Z_{\alpha,\beta})$ 
is a Bridgeland stability condition on $\dCat(X)$ if $\re Z_{\alpha,\beta}(\delta) >0$ for all roots $\delta \in \rR(X)$ with $\rk(\delta)>0$ and $\im Z_{\alpha,\beta}(\delta) =0$.  
\end{theorem}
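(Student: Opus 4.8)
The plan is to follow Bridgeland's original construction in \cite{Bridgeland2008}, which splits into three parts: producing the heart $\Coh^\beta(X)$, verifying that $Z_{\alpha,\beta}$ is a stability function on it (the positivity axiom), and establishing the Harder--Narasimhan property. The hypothesis on roots enters only in the second part, where it is exactly what is needed.

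For the first part, one starts from ordinary $\mu_H$-slope stability, which furnishes $\Coh(X)$ with its HN filtrations and hence with the torsion pair $(\cT_\beta,\cF_\beta)$, where $\cT_\beta$ is generated by all torsion sheaves together with the $\mu_H$-semistable torsion-free sheaves of slope $>\beta$ and $\cF_\beta$ by the $\mu_H$-semistable torsion-free sheaves of slope $\leq\beta$. Tilting at this torsion pair yields the heart of a bounded t-structure on $\dCat(X)$, which one identifies with the category $\Coh^\beta(X)$ of the excerpt by comparing cohomology sheaves. I would also record at this stage the two finiteness inputs needed later: $\Coh^\beta(X)$ is Noetherian, and $\im Z_{\alpha,\beta}$ takes locally finitely many values on its objects. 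Both rest on boundedness of the relevant families of sheaves, hence ultimately on the Bogomolov--Gieseker inequality, and this is the most technical ingredient of the proof.

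For the second part, one must show $Z_{\alpha,\beta}(E)\in\RR_{>0}\exp(i\pi\phi)$ with $0<\phi\leq1$ for every nonzero $E\in\Coh^\beta(X)$; that is, $\im Z_{\alpha,\beta}(E)\geq0$ and $Z_{\alpha,\beta}(E)\notin\RR_{\geq0}$. Using the two-term filtration $0\to T\to E\to F[1]\to0$ with $T\in\cT_\beta$ and $F\in\cF_\beta$, together with the $\mu_H$-HN filtrations of $T$ and $F$ and the additivity of $Z_{\alpha,\beta}$, this reduces to three cases: torsion sheaves, $\mu_H$-semistable torsion-free sheaves of slope $>\beta$, and shifts $F[1]$ of $\mu_H$-semistable torsion-free sheaves of slope $\leq\beta$. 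Since $\im Z_{\alpha,\beta}(F)=\rk(F)(\mu_H(F)-\beta)$ for a torsion-free sheaf and $\im Z_{\alpha,\beta}(F)>0$ for a torsion sheaf with one-dimensional support, the only objects of the heart with $\im Z_{\alpha,\beta}=0$ are zero-dimensional torsion sheaves, for which $\re Z_{\alpha,\beta}<0$ is immediate, and shifts $F[1]$ with $F$ $\mu_H$-semistable torsion-free of slope exactly $\beta$, for which one needs $\re Z_{\alpha,\beta}(F)>0$. Passing to the $\mu_H$-stable factors $A_i$ of a Jordan--H\"{o}lder filtration of $F$ (all of slope $\beta$, so all with $\im Z_{\alpha,\beta}(A_i)=0$) and using additivity again, it suffices to prove $\re Z_{\alpha,\beta}(A)>0$ for every $\mu_H$-stable sheaf $A$ of slope $\beta$. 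A Riemann--Roch computation on the K3 surface gives
\[
\re Z_{\alpha,\beta}(A)=\frac{\alpha^2H^2}{2}\,\rk(A)+\frac{v(A)^2}{2\,\rk(A)},
\]
while simplicity of $A$ and Serre duality give $v(A)^2=-\chi(A,A)=\mathrm{ext}^1(A,A)-2\geq-2$. If $v(A)^2\geq0$ the displayed expression is positive; if $v(A)^2=-2$ then $v(A)\in\rR(X)$ is a root of positive rank with $\im Z_{\alpha,\beta}(v(A))=0$, so $\re Z_{\alpha,\beta}(v(A))>0$ precisely by the hypothesis of the theorem. This is the only point at which the root condition is invoked, and I expect this reduction, rather than any single computation, to be where care is required.

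For the third part, $Z_{\alpha,\beta}$ is now a stability function on $\Coh^\beta(X)$, and I would apply the general criterion for the HN property in \cite[Prop.~2.4]{Bridgeland2007}: the required absence of infinite chains of subobjects with monotone phases follows from the Noetherianity of $\Coh^\beta(X)$ and the local finiteness of the values of $\im Z_{\alpha,\beta}$ noted above (for irrational $\beta$ one first observes that the subobjects of a fixed object have Mukai vectors confined to a finite set, by Noetherianity and boundedness, so that only finitely many values of $\im Z_{\alpha,\beta}$ occur on them). Local finiteness of $\sigma_{\alpha,\beta}$ itself follows in the same way. Combining the three parts shows that $\sigma_{\alpha,\beta}=(\Coh^\beta(X),Z_{\alpha,\beta})$ is a numerical Bridgeland stability condition whenever the stated condition on roots holds. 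The principal obstacle is the boundedness/Noetherian input behind the first and third parts; the second part, although it is the conceptual core and the precise place where the theorem's hypothesis is used, is short once the heart is available.
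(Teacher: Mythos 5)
The paper gives no proof of this statement; it is quoted directly from \cite{Bridgeland2008}, so the relevant comparison is with Bridgeland's original argument. Your proposal is a correct reconstruction of exactly that argument — tilting at the slope-$\beta$ torsion pair, reducing positivity to $\mu_H$-stable sheaves of slope $\beta$ where $\re Z_{\alpha,\beta}(A)=\frac{\alpha^2H^2}{2}\rk(A)+\frac{v(A)^2}{2\rk(A)}$ and the root hypothesis handles the $v(A)^2=-2$ case, and deducing the HN property from Noetherianity plus discreteness of $\im Z_{\alpha,\beta}$ — so there is nothing to add.
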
 

The stability condition $\sab$ is uniquely characterised by its kernel \[
    \ker Z_{\alpha,\beta} = \mybrace{
  (r,c, s)\in  \Halg(X)}{c=r\beta, s = \frac{r H^2}{2}(\alpha^2+\beta^2)}.
\]  
According to \cite[Lemma 2.4]{Feyzbakhsh2020}, if we  set  $ k(\alpha,\beta)= \mathrm{pr}(\ker Z_{\alpha, \beta}) \in \RR^2$,  then  $k(\alpha,\beta)$ are parameterised by the space
\begin{equation} \label{eq_structure of stab_condit }
V(X) = \mybrace{(x,y)\in \RR^2}{x>\frac{H^2 y^2}{2}} \setminus \bigcup_{\delta \in \rR(X)} \rL_{(\pi'_\delta, \pi_\delta]}
\end{equation}
where $\rR(X)$ is the collection of roots in $\Halg(X)$ and $\pi'_\delta$ is the intersection point of the parabola $\Big\{x=\frac{H^2 y^2}{2}\Big\}$ and the line $\rL_{o, \pi_\delta} $. Therefore, we may view the stability condition $\sigma_{\alpha,\beta}$ as a point in  $V(X)$. The following are some simple observations that will be frequently used in this paper:

\begin{enumerate}[label=(\Alph*)]
    \item \label{obs-A} If $\sigma\in V(X)$, then the line segment $\rL_{( o, \sigma]}$ is contained in  $V(X)$.
    \item \label{obs-B} If $\gcd(r,c)=1$ and $r>0$, the line $ry=cx$ contains a (unique) projection of root if and only if $r \mid c^2(g-1)+1$ (cf.~ \cite{Yoshioka1999}).  
In particular, the unique projection of root on the $x$-axis is $(1,0)$, which we denote by $o'$.
\end{enumerate}

A simple observation is, for elements in the same heart, we can read their phases from the plane.

\begin{proposition}[Phase reading]\label{prop_phasereading}
Fix $\sigma_{\alpha,\beta} \in V(X)$. For $E\in \Coh^\beta(X)$, let $0<\theta_\sigma\leq \uppi$ be the directed angle from $\overrightarrow{\sigma \pi_E}$ to $\overrightarrow{o\sigma}$ modulo $\uppi$.
Then, $\phi_\sigma$ is a strictly increasing function of $\theta_\sigma$. 
\begin{figure}[ht]
\centering
\begin{tikzpicture}[domain=-3:3,trim right = 4cm]

	\draw (0,0) node [below] {$o$};
	\filldraw [black] (7/6, 22/7) circle [radius=1pt] node [below right=-1 pt] {$\pi_E$};
	\filldraw [black] (-0.75, 1.5) circle [radius=1pt] node [below=3pt] {$\sigma$};

	\draw[->] (-3,0) -- (3,0) node [right]{$y=c/s$};
	\draw[->] (0,0) -- (0,4) node [above]{$x= r/s$};

	\draw plot (\x,{0.45*(\x)^2}) node[right] {$x =\frac{H^2}{2}y^2$};
	\draw [densely dotted] plot (\x,{(6/7) *(\x)+(15/7)}) ;
	\draw [densely dotted]  (-2.5,5)  -- (0,0)  ;
	\coordinate (b) at (-0.75-1, 1.5+2);
	\coordinate (v) at (-0.75, 1.5);
	\coordinate (a) at (-0.75+0.7, 1.5+0.6);
	\pic["$\theta_\sigma$", draw, <-, angle eccentricity=1.5, angle radius=0.6cm]
    {angle=a--v--b};
\end{tikzpicture}
\caption{An example of $\theta_\sigma(E)$.}
\label{fig:Phase reading}
\end{figure}
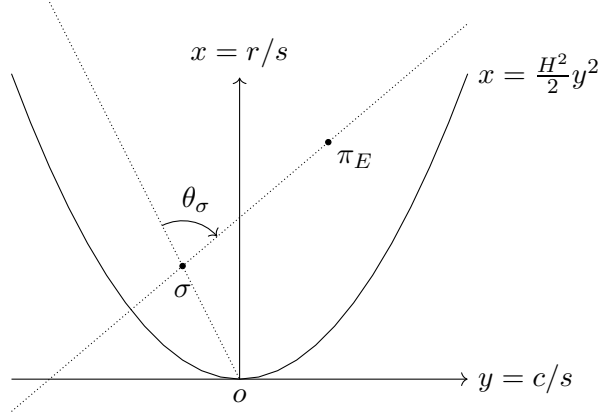
\end{proposition}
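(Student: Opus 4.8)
The plan is to transport the whole statement to the plane $\RR^2$ via $\mathrm{pr}$, where it reduces to the elementary fact that an orientation-preserving $\RR$-linear automorphism of $\RR^2$ acts on the circle of directions by a monotone homeomorphism. Concretely, I will show that $\arg Z_\sigma(E)$ depends on $\pi_E$ only through the direction of the line $\rL_{\sigma,\pi_E}$, and monotonically so; since $\theta_\sigma$ is by construction a monotone coordinate for that direction, the proposition follows.

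First I would write out the central charge explicitly. With $v(E)=(r,c,s)$ and $\tfrac{H^2}{2}=g-1$, unwinding the Mukai pairing gives
\[
\re Z_{\alpha,\beta}(E)=2(g-1)\beta c-s-(g-1)(\beta^2-\alpha^2)r,\qquad \im Z_{\alpha,\beta}(E)=c-\beta r .
\]
Both are $\RR$-linear in $v(E)$, so the restriction of $Z_{\alpha,\beta}$ to the affine plane $\{s=1\}$ is an affine map $w\colon\RR^2\to\CC$, $w(x,y)=Z_{\alpha,\beta}(x,y,1)$, with $Z_{\alpha,\beta}(v)=s\cdot w(\mathrm{pr}(v))$ whenever the last coordinate $s$ of $v$ is nonzero. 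Write $L$ for the linear part of $w$. Two observations: (i) the generator $(1,\beta,\tfrac{H^2}{2}(\alpha^2+\beta^2))$ of $\ker Z_{\alpha,\beta}$ projects precisely to $\sigma=k(\alpha,\beta)$, so $w(\sigma)=0$ and hence $w(p)=L(p-\sigma)$ for all $p\in\RR^2$; (ii) a one-line determinant computation gives $\det L=(g-1)(\alpha^2+\beta^2)>0$, i.e. $L\in\mathrm{GL}_2^+(\RR)$.

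Next I would connect this to $\pi_E$ and to the phase. For $E\in\Coh^\beta(X)$ with $s=\chi(E)-\rk(E)\neq 0$ one has $Z_\sigma(E)=s\,w(\pi_E)=s\,L(\pi_E-\sigma)$, whence $\arg Z_\sigma(E)\equiv\arg L(\pi_E-\sigma)\pmod\pi$; note $\pi_E\neq\sigma$ always, since $\pi_E=\sigma$ would force $Z_\sigma(E)=0$, impossible for $0\neq E$ in the heart. Since $E$ lies in $\Coh^\beta(X)$, axiom (1) of a stability condition gives $\pi\phi_\sigma(E)=\arg Z_\sigma(E)\in(0,\pi]$, so the congruence above pins down $\pi\phi_\sigma(E)$ as the representative in $(0,\pi]$ of $\arg L(\pi_E-\sigma)\bmod\pi$, for either sign of $s$. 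Now set $\xi=\arg(\pi_E-\sigma)\bmod\pi\in\RR/\pi\ZZ$, the direction of $\rL_{\sigma,\pi_E}$. Then $\pi\phi_\sigma(E)$ is obtained from $\xi$ by applying $L$ and reading off the argument, and $\tfrac{d}{d\xi}\arg L(\cos\xi,\sin\xi)=\det L\,/\,|L(\cos\xi,\sin\xi)|^2>0$, so this is a strictly monotone (orientation-preserving) map from the circle of directions onto $(0,\pi]$. On the other hand, by its definition $\theta_\sigma$ is the directed angle from $\overrightarrow{\sigma\pi_E}$ to the fixed ray $\overrightarrow{o\sigma}$ modulo $\pi$, i.e. $\theta_\sigma=(\arg\sigma-\xi)\bmod\pi$, again a monotone bijection between the circle of directions and $(0,\pi]$. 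Composing, $\phi_\sigma$ is a strictly monotone function of $\theta_\sigma$; that the monotonicity is \emph{increasing} (with the axis orientation of Figure~\ref{fig:Phase reading}) is confirmed on the test object $E=\cO_x$, for which $\pi_E=o$, so $\overrightarrow{\sigma\pi_E}$ and $\overrightarrow{o\sigma}$ are antiparallel and $\theta_\sigma=\pi$, while $Z_\sigma(\cO_x)=-1$ gives $\phi_\sigma=1$ — both at the common endpoint.

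I expect the only genuine subtlety to be the orientation bookkeeping of this last step: keeping track of the axis convention of Figure~\ref{fig:Phase reading} and of the wrap-around of the two circle coordinates $\theta_\sigma$ and $\xi$ (so that ``strictly increasing'' is read on the circle of directions, equivalently on an interval avoiding the one wrap-around). The harmless degenerate case $s=0$ fits the same framework after replacing $(\pi_E,1)$ by the direction $(r,c)$ of $\rL_{\sigma,\pi_E}$ with $\pi_E$ at infinity (one checks directly $Z_\sigma(E)=L(r,c)$), or by continuity. Everything else reduces to the identity $Z_\sigma(E)=s\,L(\pi_E-\sigma)$ together with $\det L=(g-1)(\alpha^2+\beta^2)>0$; as a byproduct this also shows that the level sets $\{\phi_\sigma=\mathrm{const}\}$ are exactly the lines through $\sigma$, which is the geometric content underlying ``phase reading''.
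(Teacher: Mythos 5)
Your proof is correct. It rests on the same geometric fact as the paper's proof --- that $\phi_\sigma(E)$ depends on $\pi_E$ only through the direction of the line $\rL_{\sigma,\pi_E}$, with level sets of $\phi_\sigma$ being exactly the lines through $\sigma$ --- but you justify the monotonicity differently. The paper argues softly: two objects have equal phase iff a real combination of their Mukai vectors lies in $\ker Z_{\alpha,\beta}$, iff their projections are collinear with $\sigma$; strict monotonicity then follows ``by continuity,'' and the direction from comparing endpoints. You instead make the mechanism explicit by writing $Z_\sigma(E)=s\,L(\pi_E-\sigma)$ and computing $\det L=(g-1)(\alpha^2+\beta^2)>0$, so that $L\in\mathrm{GL}_2^+(\RR)$ acts monotonically on directions with an explicit positive derivative $\det L/|L(\cos\xi,\sin\xi)|^2$; the endpoint check via the skyscraper sheaf ($\theta_\sigma=\uppi$, $\phi_\sigma=1$) then fixes the sign, exactly as the paper's $\phi_\sigma(0^+)<\phi_\sigma(\uppi)$ does. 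What your route buys is rigor at the one place the paper is terse (the ``due to continuity'' step, which really needs local injectivity on the circle of directions --- your determinant computation supplies it quantitatively), plus a uniform treatment of the $s=0$ direction at infinity; what it costs is the explicit unwinding of the central charge. Your closing remark that the level sets are the lines through $\sigma$ recovers precisely the paper's starting observation, so the two arguments are logically equivalent reorderings of the same content.
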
 

\begin{proof}
Note that $\phi_\sigma(E_1)=\phi_\sigma(E_2)$ if and only if
\begin{equation*}
      v(E_1) + \lambda v(E_2) \in  \ker Z_{\alpha, \beta} 
\end{equation*}
for some $\lambda \in \RR^*$, which is equivalent to $\sigma,\pi_{v(E_1)},\pi_{v(E_2)}$ being colinear, as $\sigma\in V(X)$ is precisely the projection of the kernel of $Z_\sigma$. 
This already proves $\phi_\sigma$ is a strictly monotonic function of $\theta_\sigma$ due to continuity.
It is increasing since $ \phi_{\sigma}(0+)< \phi_{\sigma}(\pi)$. 
The interchange phase $\phi_\sigma=1$ corresponds to the line $\rL_{o, \sigma}$.
\end{proof}

\subsection*{Wall and chamber structure} For any object $E\in \dCat(X)$, there is a wall and chamber structure of $V(X)$ described as follows. 

\begin{proposition}[cf.~{\cite[Proposition 2.6]{Feyzbakhsh2020}}] \label{prop_chamber decomposition}
Given an object $E\in \dCat(X)$, there exists a locally finite set of walls
(line segments) in $V(X)$ with the following properties:
\begin{enumeratea}
    \item The $\sab$-(semi)stability  of $E$ is independent of the choice of the stability condition $\sab$ in any chamber. 
    \item If $\sigma_{\alpha_0, \beta_0}$ is on a wall $\cW_E$, i.e. the point $k(\alpha_0, \beta_0)\in \cW_E$,  $E$ is strictly $\sigma_{\alpha_0, \beta_0}$-semistable.
    \item If $E$ is semistable in one of the adjacent chambers to a wall, then it is unstable in other adjacent chambers.
    \item  Any wall $\cW_E$ is a connected component of $\rL \cap V (X)$, where $\rL$ is a line  passing through the point $\pi_E$ if $\chi(E)\neq \rk(E) $, or with slope $\rk(E)/c_H(E)$ if $\chi(E) = \rk(E)$.
\end{enumeratea}
\end{proposition}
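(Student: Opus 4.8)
The proposition is the specialization to the two-real-parameter slice $\{\sab\}\cong V(X)$ of Bridgeland's general wall-and-chamber theory \cite{Bridgeland2008}, combined with the planar translation of the numerical wall equations supplied by Proposition \ref{prop_phasereading}. The plan is to argue in three steps.

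\emph{Step 1 (finitely many potential destabilizers).} Fix $E\in\dCat(X)$ and a relatively compact open $U\subset V(X)$. A wall adjacent to a region where $E$ is semistable can only be created by a subobject $F\hookrightarrow E$ in $\cA_\sigma=\Coh^\beta(X)$ which is $\sigma$-semistable of the same phase as $E$ at some point of $U$, with $E$ itself semistable there; for such data $\phi_\sigma(F)$ lies in a bounded interval and $|Z_{\alpha,\beta}(F)|\leq|Z_{\alpha,\beta}(E)|$, uniformly for $\sigma\in U$ since $Z_{\alpha,\beta}(E)$ is continuous in $(\alpha,\beta)$. The support property satisfied by $\sab$ (part of Theorem \ref{thm:stab}) then forces the Mukai vectors $v(F)$ that can occur to lie in a finite subset of $\Halg(X)$ — this is the local finiteness of walls, cf.\ \cite[Prop.~2.6]{Feyzbakhsh2020}. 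Hence only finitely many numerical loci $\{\sigma:\phi_\sigma(F)=\phi_\sigma(E)\}$ meet $U$; a \emph{wall} $\cW_E$ is a connected component of one of these loci on which $E$ is strictly semistable.

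\emph{Step 2 (geometry of the walls, i.e.\ (d)).} Fix one such $F$, so $v(F)+v(E/F)=v(E)$ in $\Halg(X)$. By the computation in the proof of Proposition \ref{prop_phasereading}, $\phi_\sigma(F)=\phi_\sigma(E)$ holds exactly when $v(F)+\lambda\, v(E)\in\ker Z_\sigma$ for some $\lambda\in\RR$, equivalently when $\sigma=\mathrm{pr}(\ker Z_\sigma)$ lies on the image under $\mathrm{pr}$ of the plane $\Pi_F:=\mathrm{span}_\RR\big(v(F),v(E)\big)\subset\Halg(X)\otimes\RR$; provided $\Pi_F\neq\{s=0\}$ this image is an affine line $\rL$. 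If $\chi(E)\neq\rk(E)$ then $\pi_E=\mathrm{pr}(v(E))$ is defined and $\rL=\rL_{\pi_E,\pi_F}$, which also contains $\pi_{E/F}$; if $\chi(E)=\rk(E)$ then the $s$-coordinate of $v(E)$ vanishes, $\mathrm{pr}(v(E))$ is a point at infinity in the direction $(\rk(E),c_H(E))$, and $\rL$ is the line through $\pi_F$ of slope $\rk(E)/c_H(E)$. In either case $\cW_E$ is a connected component of $\rL\cap V(X)$, a line segment because $V(X)$ is open in $\RR^2$; this is (d).

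\emph{Step 3 (parts (a)--(c)) and the main difficulty.} These are now formal. On a chamber (a connected component of $V(X)$ with every wall of $E$ deleted) none of the finitely many relevant $F$ has $\phi_\sigma(F)=\phi_\sigma(E)$, so each difference $\phi_\sigma(F)-\phi_\sigma(E)$ keeps a constant sign there; hence the $\sab$-(semi)stability of $E$ is constant on the chamber, which is (a). If $\sigma_0\in\cW_E$ and $E$ is semistable on an adjacent chamber, continuity of $Z_{\alpha,\beta}$ forces $\phi_{\sigma_0}(F)=\phi_{\sigma_0}(E)$ for the class $F$ defining $\cW_E$, while $E$ remains semistable at $\sigma_0$ by the standard limiting argument on Harder--Narasimhan factors \cite{Bridgeland2008}; so $E$ is strictly $\sigma_0$-semistable, giving (b), and since crossing $\cW_E$ reverses the sign of $\phi_\sigma(F)-\phi_\sigma(E)$ we obtain (c). The one genuinely non-formal ingredient is the finiteness in Step 1: it rests on the support property together with the fact that $\mathrm{pr}(\ker Z_{\alpha,\beta})$ stays off the discriminant parabola, which is built into the description of $V(X)$, and this is the step I would expect to demand the most care; everything afterwards is bookkeeping funneled through Proposition \ref{prop_phasereading}.
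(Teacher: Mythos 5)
The paper does not actually prove this proposition: it is imported verbatim, with a ``cf.''\ citation, from \cite[Proposition 2.6]{Feyzbakhsh2020}, which in turn rests on the wall-and-chamber machinery of \cite{Bridgeland2008}. So there is no in-paper proof to compare against; what you have written is a reconstruction of the standard argument behind the cited result, and in outline it is the right one. Your Step 2 in particular is exactly the mechanism the paper itself uses elsewhere (the proof of Proposition \ref{prop_phasereading} and the remark around \eqref{eq_def}): the numerical wall for a destabilizer $F$ is $\mathrm{pr}\bigl(\mathrm{span}_\RR(v(F),v(E))\bigr)$, which passes through $\pi_E$ when $\chi(E)\neq\rk(E)$ and degenerates to the line of slope $\rk(E)/c_H(E)$ when the $s$-coordinate of $v(E)$ vanishes; this gives (d) correctly.

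Two places in your sketch carry essentially all of the content and are stated rather than proved. First, the finiteness in Step 1 needs the support property for the family $\sab$, which Theorem \ref{thm:stab} as reproduced in this paper does not record; you are entitled to it (it holds for Bridgeland's construction on a K3), but it should be invoked explicitly as an input, and the boundedness must be applied not just to subobjects of a semistable $E$ but to the classes of all HN factors of $E$ over the compact set $\overline{U}$, since chambers where $E$ is unstable must also be controlled. Second, in Step 3 the heart $\Coh^\beta(X)$ varies with $\sigma$, so ``$F\hookrightarrow E$ with $\phi_\sigma(F)-\phi_\sigma(E)$ of constant sign on the chamber'' does not by itself yield (a) or (c): one must check that the destabilizing subobject (or the JH factors at a point of the wall) remain objects of the nearby hearts, which is the ``standard limiting argument'' you defer to. Neither point is a wrong turn --- both are handled in \cite{Bridgeland2008} and \cite{Feyzbakhsh2020} --- but they are precisely where the proof lives, so a complete write-up would have to supply them rather than cite them.
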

By definition, if $E\in \Coh^\beta(X)$  is $\sigma_{\alpha,\beta}$-semistable, then $\pi_E\neq \sigma_{\alpha,\beta}$.
Combined with Proposition~\ref{prop_chamber decomposition},  one can see that for any line segment $\rL_{[ \sigma_1, \sigma_2 ]}\subseteq V(X)$ containing   $\sigma_{\alpha,\beta }$  with $\sigma_1, \sigma_2$, and $\pi_E$  colinear, we have 
\begin{equation}\label{eq_def}
    \pi_E\notin \rL_{[\sigma_1, \sigma_2 ]}.
\end{equation}
This will be used in later sections. 

\section{The destabilising regions}\label{sec:des-region}
In this section, we characterise the stability conditions which are not lying on the walls of an object  $E\in \rD^b(X)$. As a warm up, we first assume  $\pi_E\in \partial V(X)$ and hence $v(E)^2=0$ or $-2$. Then we have
\begin{proposition}[Triangle rule]\label{prop_triangle rule}
Let $E\in \dCat(X)$ and let $I \subseteq V(X)$ be a line segment.  Assume 
\begin{equation}\label{eq_triangle rule}
    \tri_{\pi_E}(I) \subseteq V(X).
\end{equation}
Then any point in $I$ is not on a wall. In particular, if $I=\rL_{[ \sigma_1,\sigma_2]}$, then $E$ is $\sigma_1$-stable if and only if it is $\sigma_2$-stable. 
\begin{figure}[ht]
\centering
\begin{tikzpicture}[domain=-2.7:2.7,trim right = 4cm]
    
	\draw (0,0) node [below] {$o$};
	\coordinate (E) at (2, 1.8);
	\coordinate (p1) at (-1.5, 3);
	\coordinate (p2) at (-1, 1.5);
	\draw [blue,opacity=0.4] (p1) -- (p2);
	\coordinate (p0) at ($ (p1)!.5!(p2)$);
	\filldraw [black] (E) circle [radius=1pt] node [below right] {$\pi_E$};
	\filldraw [black] (p1) circle [radius=1pt] node [left] {$\sigma_1$};
	\filldraw [black] (p2) circle [radius=1pt] node [below left] {$\sigma_2$};
	\draw[red] (p0) -- (E);
	\draw[->] (-2.7,0) -- (2.7,0) node [right]{$y$};
	\draw[->] (0,0) -- (0,4) node [above]{$x$};

	\draw plot (\x,{0.45*(\x)^2}) node[above] {$x =\frac{H^2}{2}y^2$};
	\filldraw [fill = blue!20!white,draw = blue,opacity=0.4] (p1)  -- (E) -- (p2);
	\filldraw [black] (p0) circle [radius=1pt] node [left] {$\sigma_0$};
\end{tikzpicture}
\caption{}
\label{fig:Triangle Rule}
\end{figure}
\end{proposition}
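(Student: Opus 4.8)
The key point is that the triangle hypothesis \eqref{eq_triangle rule} lets us walk from $\sigma_2$ to $\sigma_1$ along $I$ without ever crossing a wall of $E$. First I would recall from Proposition~\ref{prop_chamber decomposition}(d) that every wall $\cW_E$ is a connected component of $\rL\cap V(X)$ for a line $\rL$ through $\pi_E$ (using here that $\pi_E\in\partial V(X)$, so $v(E)^2=0$ or $-2$ and in particular $\chi(E)\neq\rk(E)$ is not the exceptional case, or handle that case by the slope condition). So to show no point of $I$ lies on a wall, it suffices to show: for every $\sigma_0\in I$, the line $\rL_{o',\pi_E}$—rather, the line $\rL_{\pi_E,\sigma_0}$ through $\pi_E$ and $\sigma_0$—does not have $\sigma_0$ in the same connected component of $\rL_{\pi_E,\sigma_0}\cap V(X)$ as a genuine wall. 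Concretely, I would argue that the relevant ray from $\pi_E$ through $\sigma_0$ stays in $V(X)$ on the closed segment $\rL_{(\pi_E,\sigma_0]}$ by the triangle hypothesis, and then use the wall-avoidance observation \eqref{eq_def}: since $\tri_{\pi_E}(I)\subseteq V(X)$, the point $\pi_E$ itself is a limit of points in $V(X)$ along each such ray, and a wall through $\pi_E$ would have to be a connected component of $\rL_{\pi_E,\sigma_0}\cap V(X)$ \emph{not containing} $\pi_E$; but the triangle region being inside $V(X)$ forces the component of $\sigma_0$ to be the one accumulating at $\pi_E$, hence it cannot be a wall.

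Let me restate that more carefully as the actual step sequence. Step 1: reduce to showing no $\sigma_0\in I$ lies on a wall of $E$, since then the chamber containing $I$ (or, more precisely, a connected neighbourhood of $I$ minus walls) is constant, giving $\sigma_1$-stability $\iff$ $\sigma_2$-stability by Proposition~\ref{prop_chamber decomposition}(a). Step 2: fix $\sigma_0\in I$ and suppose for contradiction $\sigma_0\in\cW_E$ for some wall $\cW_E\subseteq\rL\cap V(X)$ with $\pi_E\in\rL$ (Proposition~\ref{prop_chamber decomposition}(d)). Step 3: observe $\rL=\rL_{\pi_E,\sigma_0}$, and by the triangle hypothesis the half-open segment $\rL_{(\pi_E,\sigma_0]}$ lies in $\tri_{\pi_E}(I)\subseteq V(X)$—wait, one must check $\sigma_0\in I$ implies $\rL_{(\pi_E,\sigma_0]}\subseteq\tri_{\pi_E}(I)$, which is immediate from the definition $\tri_{\pi_E}(I)=\bigcup_{q\in I}\rL_{(\pi_E,q]}$. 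Step 4: therefore $\rL_{(\pi_E,\sigma_0]}$ is a connected subset of $\rL\cap V(X)$ containing $\sigma_0$, so the connected component $\cW$ of $\rL\cap V(X)$ containing $\sigma_0$ contains all of $\rL_{(\pi_E,\sigma_0]}$ and hence has $\pi_E$ as a boundary point. Step 5: but by \eqref{eq_def}, any wall $\cW_E$ that is a component of $\rL_{[\sigma_1',\sigma_2']}\cap V(X)$ type configuration with $\sigma_0,\pi_E$ colinear satisfies $\pi_E\notin\overline{\cW_E}$—more precisely, $E$ $\sigma$-semistable for $\sigma$ near $\pi_E$ on this line is impossible because $\pi_E=\sigma$ would be forced. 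This contradicts $\cW$ being a wall. Hence $\sigma_0$ lies in no wall.

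**Main obstacle.** The delicate point is Step 5: pinning down exactly why a wall on the line $\rL_{\pi_E,\sigma_0}$ cannot accumulate at $\pi_E$. The content is the remark after Proposition~\ref{prop_chamber decomposition}: if $E\in\Coh^\beta(X)$ is $\sigma_{\alpha,\beta}$-semistable then $\pi_E\neq\sigma_{\alpha,\beta}$, so along the line through $\pi_E$ and $\sigma_0$, as $\sigma$ tends to $\pi_E$ from within $V(X)$, $E$ cannot remain semistable; in fact one gets that the component of $\rL\cap V(X)$ abutting $\pi_E$ is a chamber-type piece adjacent to the "non-semistable side", so it cannot itself be a wall. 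I would need to phrase this using the large-volume/phase-reading picture (Proposition~\ref{prop_phasereading}): as $\sigma_0$ moves along $\rL_{(\pi_E,\sigma_0]}$ toward $\pi_E$, the directed angle $\theta_\sigma$ and hence $\phi_\sigma(E)$ versus $\phi_\sigma$ of a hypothetical destabiliser vary monotonically, so a wall (where phases coincide) can be crossed at most finitely often and in particular the endpoint segment touching $\pi_E$ is wall-free. The second sentence of the statement then follows formally from part (a) of Proposition~\ref{prop_chamber decomposition} applied along $I=\rL_{[\sigma_1,\sigma_2]}$, since stability is constant on a wall-free connected set.
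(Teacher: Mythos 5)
Your setup (Steps 1--4) matches the paper's: assume some $\sigma_0\in I$ lies on a wall $\cW_E\subseteq \rL\cap V(X)$ with $\pi_E\in\rL$, and use the hypothesis \eqref{eq_triangle rule} to conclude that the whole half-open segment $\rL_{(\pi_E,\sigma_0]}$ lies in $V(X)$, hence in the wall, so that the wall accumulates at $\pi_E$. The problem is Step 5, which you yourself flag as the delicate point: neither of the two justifications you offer for why this accumulation is impossible actually works. First, \eqref{eq_def} only forbids $\pi_E$ from lying \emph{on} a closed segment $\rL_{[\sigma_1,\sigma_2]}\subseteq V(X)$ at whose points $E$ is semistable; here $\pi_E$ is merely a limit point of $\rL_{(\pi_E,\sigma_0]}$ and typically $\pi_E\notin V(X)$ at all (it sits on $\partial V(X)$ when $v(E)^2=0$ or $-2$), so \eqref{eq_def} is vacuous and gives no contradiction. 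Second, the phase-monotonicity argument is a non sequitur: as $\sigma$ moves along $\rL_{(\pi_E,\sigma_0]}$ you are travelling \emph{along} the wall, not across it, so $\phi_\sigma(E)$ and the phase of any destabiliser coincide identically on the whole segment; ``a wall can be crossed at most finitely often'' says nothing about whether a single wall can abut $\pi_E$.

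The missing idea is a comparison of \emph{magnitudes} of central charges, not phases. Since $E$ is strictly $\sigma$-semistable at every $\sigma\in\rL_{(\pi_E,\sigma_0]}$, one extracts a semistable factor $F\subset E$ with $\phi_{\sigma_0}(F)=\phi_{\sigma_0}(E)$ and $\pi_F\neq\pi_E$, and checks (via Proposition~\ref{prop_chamber decomposition}(b) applied to $E$, $F$ and $\cok(F\to E)$) that $F$ remains a proper subobject of $E$ in the relevant heart along the entire segment. Because $F$, $E/F$ and $E$ all have the same $\sigma$-phase, their central charges lie on a common ray, giving $0<\lvert Z_\sigma(F)\rvert<\lvert Z_\sigma(E)\rvert$ for all $\sigma\in\rL_{(\pi_E,\sigma_0]}$. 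Letting $\sigma\to\pi_E$ forces $\lvert Z_\sigma(E)\rvert\to 0$ (since $v(E)$ becomes proportional to $\ker Z_\sigma$) while $\lvert Z_\sigma(F)\rvert$ stays bounded away from $0$ because $\pi_F\neq\pi_E$; this is the contradiction. Without this quantitative step your argument does not close.
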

\begin{proof}
Assume on the contrary, i.e. there is a wall $\cW_E\subseteq \rL\cap V(X)$ where L passes through $\pi_E$ and intersects with $I$. Let $\sigma_0=I \cap \cW_E$. By our assumption, one has  
$$\rL_{(\pi_E,\, \sigma_0 ]} \subseteq \cW_E \subseteq V(X).$$
By Proposition \ref{prop_chamber decomposition} (b),  $E$ is strictly $\sigma$-semistable for any $\sigma\in \rL_{(\pi_E , \sigma_0 ]}$. 
Up to a shift, one may assume that $E\in \Coh^{\beta(\sigma_0)}(X)$.   Since $\sigma_0$ is on a wall, there exists some semistable factor $F\subset E$ in $\Coh^{\beta(\sigma_0)}(X)$ such that $\phi_{\sigma_0}(F)=\phi_{\sigma_0}(E)$ and $\phi_{\sigma}(F)>\phi_{\sigma}(E)$ for $\sigma$ in an adjacent chamber. In particular, $\pi_F \neq  \pi_E$. 
Applying Proposition \ref{prop_chamber decomposition}(b) to $E$, $F$, and $\cok(F \to E)$ respectively, we know that they remain in the heart for any $\sigma\in  \rL_{( \pi_E,\sigma_0]}$.  Hence $F\subset E$ is a proper subobject in the corresponding $\Coh^\beta(X)$. As a consequence, we get
$$0 <  |Z_\sigma(F)| < | Z_\sigma(E)|. $$
Now, if we tend $\sigma$ to $\pi_E$, then $| Z_\sigma(E)| \to 0$ while $| Z_\sigma(F)|\to \epsilon >0$ since $\pi_F \neq \pi_E$. This is a contradiction.
\end{proof}

\subsection*{Destabilising regions}
The Proposition above only works for  $\pi_E\in \partial V(X)$ due to \eqref{eq_def}.
For the case $v(E)^2\geq 0$,  we need to make use of the three-dimensional region defined  as below:   for any $\sigma \in V(X)$ and $v\in \Halg(X)$, 
let $\rL_{( \sigma',\sigma'')} \subseteq \rL_{\sigma, \pi_v} \cap\,  V(X)$ be the connected  component  containing $\sigma$. 
Let $[\sigma]\subseteq \RR^3$ be the preimage of $\sigma$ via the projection $\mathrm{pr}:\RR^3\dashrightarrow\RR^2$. We define the \textbf{destabilising region of $v$ with respect to $\sigma$} as
$$\Omega_v(\sigma) =   (\bP_{Ov^{\!+}_{\sigma} v v^{\!-}_{\sigma}} \setminus \{o,v\} ) \cap \left \{u\in \RR^3\mid u^2\geq -2, (u-v)^2\geq -2 \right \},$$
where $ v^+_\sigma=  [\sigma'] \cap ( [ \sigma'']+v) $ and $v^-_\sigma=[\sigma''] \cap ( [\sigma']+v)$. Up to switching $\sigma'$ and $\sigma''$, we may always let $$v^+_\sigma\in \{(x,y,z)|~x\geq 0, z\geq0 \}\quad\text{and}\quad v^-_\sigma \in \{(x,y,z)|~ x\leq 0, z\leq0\}.$$  There is a natural decomposition 
$$\Omega_v(\sigma) = \Omega_v^+(\sigma)\sqcup \rL_{(O,v)}\sqcup \Omega_v^-(\sigma),$$
where  $\Omega^\pm_v(\sigma)=\Omega_v(\sigma)\cap \bP_{Ov^{\!\pm}_{\sigma} v} \setminus \rL_{(O,v)}$. 
We call $\Omega^+_v(\sigma)$ the \textbf{strictly destabilising region of $v$ with respect to $\sigma$}.   A key result is

\begin{lemma}\label{lemma_triangle rule+}
For $E\in \dCat(X)$ and $\sigma \in V(X) $,  if $\sigma$ is lying on a wall of $E$, then there exists an integer point  in $ \Omega^+_{v(E)} (\sigma) .$ 
\end{lemma}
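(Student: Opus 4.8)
The plan is to exploit the fact that $\sigma$ lying on a wall $\cW_E$ produces, after a suitable shift so that $E\in\Coh^{\beta(\sigma)}(X)$, a destabilising short exact sequence $0\to F\to E\to G\to 0$ in $\Coh^{\beta(\sigma)}(X)$ with $\phi_\sigma(F)=\phi_\sigma(E)=\phi_\sigma(G)$ and $F$ (say) a semistable factor with $\phi_{\sigma'}(F)>\phi_{\sigma'}(E)$ in an adjacent chamber, as recorded in the proof of Proposition~\ref{prop_triangle rule}. The point $\pi_F$ (or the line of slope $\rk F/c_H(F)$ when $\chi(F)=\rk F$) is then colinear with $\sigma$ and $\pi_E$, so $\pi_F$ lies on the line $\rL_{\sigma,\pi_v}$; moreover, since $F$, $E$, $G$ all stay in the heart along $\rL_{(\sigma',\sigma'')}$, one has $v(E)=v(F)+v(G)$ as Mukai vectors with $v(F),v(G)$ projecting onto this line. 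The first step is therefore to pin down \emph{where} on the segment $\rL_{\sigma',\sigma''}$ the projections $\pi_F,\pi_G$ can sit: I would argue that $v(F)$ must lie in the planar region $\bP_{Ov^+_\sigma v v^-_\sigma}$ — this is exactly the set of Mukai vectors $u$ with $u$ and $v-u$ both projecting into the connected component $\rL_{(\sigma',\sigma'')}$ of $\rL_{\sigma,\pi_v}\cap V(X)$, which is how $v^\pm_\sigma$ were defined as the ``extreme'' lifts $[\sigma']\cap([\sigma'']+v)$ and $[\sigma'']\cap([\sigma']+v)$.

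Next I would show $v(F)$ (or $v(G)$, whichever is the semistable factor that destabilises toward the relevant side) lands in $\Omega^+_{v(E)}(\sigma)$ rather than in $\rL_{(O,v)}\sqcup\Omega^-_{v(E)}(\sigma)$. The equality case $v(F)\in\rL_{(O,v)}$ is excluded because then $\pi_F=\pi_E$, contradicting $\pi_F\ne\pi_E$ (a strictly semistable factor at a genuine wall has distinct projection). The side — i.e. whether $v(F)$ is on the $\bP_{Ov^+_\sigma v}$ or the $\bP_{Ov^-_\sigma v}$ side — is determined by the sign of $\phi_{\sigma'}(F)-\phi_{\sigma'}(E)$ in the adjacent chamber via Phase Reading (Proposition~\ref{prop_phasereading}): crossing the wall makes $\phi(F)$ overtake $\phi(E)$ on one side, and by the orientation convention ($v^+_\sigma$ in the quadrant $x\ge 0,z\ge 0$) this forces $v(F)$ into the half labelled $+$. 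Up to relabelling $\sigma'\leftrightarrow\sigma''$ I can always arrange that the destabilising factor is the one giving the $+$ region, so $v(F)\in\Omega^+_{v(E)}(\sigma)$.

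Finally, the constraints defining $\Omega_v(\sigma)$ beyond the polygon, namely $u^2\ge -2$ and $(u-v)^2\ge -2$, hold because $v(F)^2\ge -2$ and $v(G)^2=(v(F)-v(E))^2\ge -2$ for any semistable object on a K3 surface (Mukai's inequality / the structure of the Mukai lattice), while $v(F)$ is an integer point since it is the Mukai vector of an honest object. Assembling these, $v(F)$ is an integer point of $\Omega^+_{v(E)}(\sigma)$, which is the claim. The main obstacle I anticipate is the bookkeeping in the second step: making rigorous that ``$v(F)$ and $v(G)=v(E)-v(F)$ both project into the open segment $\rL_{(\sigma',\sigma'')}$'' is equivalent to ``$v(F)\in\bP_{Ov^+_\sigma v v^-_\sigma}$'', i.e. correctly identifying $v^\pm_\sigma$ as the vertices of this planar slice and checking the endpoint/degenerate configurations (e.g.\ when $\sigma'$ or $\sigma''$ lies on $\partial V(X)$, or when $F$ has $\chi=\rk$ so its "projection" is a direction rather than a point) — these edge cases are where the geometric picture needs to be matched carefully against the algebra of $v(E)=v(F)+v(G)$.
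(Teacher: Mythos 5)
Your first two steps match the paper's: the containment $v(F)\in \bP_{Ov^{+}_{\sigma}vv^{-}_{\sigma}}$ is proved there exactly as you suggest (by intersecting the constraints $\rL_{[0,Z_\tau(F)]}\subseteq \rL_{[0,Z_\tau(E)]}$ over $\tau\in\rL_{(\sigma',\sigma'')}$), and passing to the $+$ half is handled by reordering the Jordan--H\"older factors by angle so that the partial sums land in $\bP_{Ov^{+}_{\sigma}v}\setminus\rL_{(O,v)}$. The gap is in your final step. You claim that $u^2\geq -2$ and $(u-v)^2\geq -2$ hold automatically because ``$v(F)^2\geq -2$ and $v(G)^2\geq -2$ for any semistable object on a K3 surface.'' Mukai's inequality $v^2\geq -2$ holds for $\sigma$-\emph{stable} objects; a strictly $\sigma$-semistable object can violate it (e.g.\ $S_1\oplus S_2$ for two non-isomorphic spherical stable objects of the same phase with $\mathrm{Ext}^1(S_1,S_2)=0$ has Mukai square $-4$). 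In a destabilising sequence $F\hookrightarrow E\twoheadrightarrow G$ one can arrange $F$ stable (take the first JH factor), but then $G$ is only semistable and $(v(F)-v)^2=v(G)^2$ may well be $<-2$; dually, taking $G$ stable leaves $v(F)^2$ uncontrolled. So neither single choice of destabilising sequence yields a point of $\Omega^{+}_{v}(\sigma)$, and your argument stops short of the statement.

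This is precisely where the paper does its real work: it considers the two extreme candidates $v(E_1)$ and $\sum_{i=1}^{k-1}v(E_i)$ (each of which satisfies \emph{one} of the two quadratic constraints for free, by stability of $E_1$ resp.\ $E_k$) and rules out the scenario in which both fail the other constraint. That exclusion is a genuine Euclidean argument: if both candidates failed, the vertex $v^{+}_{\sigma}$ would have to lie in $\{u^2<-2\}\cap\{(u-v)^2<-2\}$, one then locates the intersection point $w$ of the two hyperbolas inside $\bP_{Ov^{+}_{\sigma}v}$, projects along the direction $\rL_{w,v^{+}_{\sigma}}$ onto $\rL_{[O,v]}$, and derives $\bigl\lvert Z_{\tau}(v(E_1))\bigr\rvert>\bigl\lvert Z_{\tau}\bigl(\sum_{i=1}^{k-1}v(E_i)\bigr)\bigr\rvert$ for the associated degenerate stability function $Z_\tau$, contradicting additivity. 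Without this (or some substitute for it), the lemma is not proved. A secondary, smaller point: the $+$ in $\Omega^{+}_{v}(\sigma)$ is fixed by the sign convention $v^{+}_{\sigma}\in\{x\geq 0,\,z\geq 0\}$, not by which side of the wall destabilises, so you cannot simply ``relabel $\sigma'\leftrightarrow\sigma''$''; instead one uses that $u\mapsto v-u$ exchanges the two halves and preserves the quadratic constraints, or, as in the paper, one orders the JH factors so that the relevant partial sums lie in the $+$ half from the start.
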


\begin{figure}[ht]
\label{}
\centering
\begin{tikzpicture}[domain=-6:6,trim right = 6cm,samples = 100]

	\coordinate (O) at (-3,0) ;
	\coordinate (v) at (3,1);
	\coordinate (v+) at (0,7);
	\filldraw [black] (O) circle [radius=1pt] node [below left] {$O$};
    
    \filldraw [black] (v) circle [radius=1pt] node [below right] {$v$};
    
    \filldraw [black] (v+) circle [radius=1pt] node [above right] {$v^+_{\sigma}$};
    
    \draw[] (O) -- (v) --(v+)--(O);
    \path [name path = baseline] (O)--(v);
    
    	%
		%
		\newcommand\tikzhyperbola[9][thick]{%
			\draw [#1, rotate around={#2: (0, 0)}, shift=#3]
			plot [variable = \t,  domain=-#6:#7] ({#4 / cos( \t )}, {#5 * tan( \t )}) node [black][#9] {#8};
		}
		
		\def\angle{90}
		\def\bigaxis{1cm}
		\def\smallaxis{0.8cm}

		\tikzhyperbola[name path = hyperbola_1, line width = 1.2pt, color=blue!80!black]{\angle}{(O)}{\bigaxis}{\smallaxis}{82}{70}{$u^2=-2$}{above};
		
		\tikzhyperbola[name path = hyperbola_2, line width = 1.2pt, color=blue!80!black]{\angle}{(v)}{\bigaxis}{\smallaxis}{70}{80}{$(u-v)^2=-2$}{below right =4.5 and 5};
		
		\pgfmathsetmacro\axisratio{\smallaxis / \bigaxis}
		
		

	 \filldraw[name intersections={of=hyperbola_1 and hyperbola_2, by=w}] [black] (w) circle [radius=1pt] node [ left] {$w$};
	 \path[name path = sline] (v+) -- ($(w)!-1.8!(v+)$);
	 \draw [name intersections={of=sline and baseline, by=q}] [thick,orange] (v+) -- (q) ;
    \filldraw [black] (q) circle [radius=1pt] node [below] {$q$};
    
     \fill[fill = black!40!white,opacity = 0.6] (w) -- ($(w)!-0.7!(v)$) -- ($(w)!-0.7!(O)$)  --cycle;
     
     \coordinate (p1) at ($(O)!.5!(q)$);
     \coordinate (p2) at ($(v)!.5!(q)$);
     \filldraw [black] (p1) circle [radius=1pt] node [below] {$p_1$};
     \filldraw [black] (p2) circle [radius=1pt] node [below] {$p_2$};

    \draw (p1) -- ($(p1)+($(v+)-(q)$)$) ;
    \draw (p2) -- ($(p2)+($(v+)-(q)$)$) ;
    \filldraw [black] ($(p1)+0.4*($(v+)-(q)$)$) circle [radius=1pt];
    \filldraw [black] ($(p2)+0.4*($(v+)-(q)$)$) circle [radius=1pt];
\end{tikzpicture}
\caption{}
\label{fig:Q-integer point}
\end{figure}

\begin{proof}
Set $v=v(E)$ for temporary notation. Firstly, for any $G\subseteq F \subseteq E$ in $\Coh^{\beta(\sigma)}(X)$ satisfying that $E,F,G$ have the same $\sigma$-phase,
we always have that $v(F/G)$ is lying in the parallelogram $ \bP_{Ov^{\!+}_{\sigma} v v^{\!-}_{\sigma}}$.
This is because  there are inclusions $$ \rL_{[0, Z_{\tau}(F/G) ]} \subseteq  \rL_{[ 0, Z_{\tau}(F) ] } \subseteq \rL_{[0, Z_{\tau}(E) ]} $$ for any $\tau \in  \rL_{(\sigma',\sigma'' )}$, which yields that
\begin{equation*}
 v(F/G) \in     \bigcap_{\tau \in \rL_{( \sigma', \sigma'' )}} Z_\tau^{-1}(\rL_{[ 0,Z_\tau(v)]})=\bP_{Ov^+_{\sigma} v v^-_{\sigma}}.
\end{equation*}
In particular, if $0=\widetilde{E}_0 \subset \dots \subset \widetilde{E}_k=E$ is a $\sigma$-JH filtration of $E$ with $E_i = \widetilde{E}_i / \widetilde{E}_{i-1}$ its JH-factors, then any $v(E_i)$ and also   $v-v(E_i)$ is lying  $\bP_{Ov^+_{\sigma} v v^-_{\sigma}}$.

If necessary, reordering these factors $E_i$ such that
the angles between $\rL^+_{O,v(E_i)}$ and $\rL_{O,v_\sigma^+}^+$ increase.
Then  $\sum\limits_{i=1}^{j} v(E_i)$ is an integer point in $\bP_{O v_\sigma^+ v}\setminus \rL_{(O,v)} $ for any $j$. We claim that  either $v(E_1)$ or $\sum\limits_{i=1}^{k-1} v(E_i)$ is lying in $\Omega_v(\sigma)$. This can be proved by using purely Euclidean geometry. 
Suppose this fails, then  we have
\begin{equation}\label{eq:position}
\begin{aligned}
  v(E_1)^2\geq -2 \quad \hbox{and}\quad  (v(E_1)-v)^2  <-2 ,   \\
  (\sum\limits_{i=1}^{k-1} v(E_i) -v)^2\geq -2 \quad \hbox{and}\quad  (\sum\limits_{i=1}^{k-1} v(E_i) )^2<-2,
\end{aligned}
\end{equation}
as $E_i$ is stable. If we restrict the quadratic equation $u^2=-2$ to the plane of $\bP_{ovv_\sigma^+}$, we can obtain a hyperbola, whose center is $O$. The edge  $\rL_{[O,\, v_\sigma^+]}$ can meet the connected component of this hyperbola at most one point. Similarly, $\rL_{[v,\, v_\sigma^+]} $ can intersect with the connected component of  the hyperbola defined by $(u-v)^2=-2 $ at most one point. Note that the edge $\rL_{[O,v]}$ is lying outside the area   
\begin{equation}\label{eq:region}
    \Big\{u^2<-2, (u-v)^2<-2\Big\}.
\end{equation}
see the shadow part in Figure \ref{fig:Q-integer point}. By \eqref{eq:position}, $v_\sigma^+$ has to lie  in the region \eqref{eq:region}. 
Moreover,  as one can see from  the picture, there is a point  $w\in \bP_{Ov^+_\sigma v}^\circ$ lying on the intersection of two hyperboloids
\begin{equation}
\Big\{u\in \RR^3|~ u^2= -2\Big\} \cap \Big\{u\in\RR^3|~(u-v)^2=-2\Big\}. 
\end{equation}
and the line $\rL_{w,v^+_\sigma}$  will intersect  the edge $\rL_{[O,v]}$  at a point, denoted by  $q$. 
Thus we get that the point $v(E_1)$ is contained in the triangle $\bP_{qv v^+_\sigma}$,  while the point $\sum_{i=1}^{k-1} v(E_i)$ is contained  in the triangle $ \bP_{Oq v^+_\sigma  }$.  If we define $\rL_u$ to be the line passing through  $u\in\RR^3$ and parallel to the line $\rL_{w, v^+_{\sigma} }$, the discussion above exactly means 
\begin{equation}\label{eq:inclusions} 
\rL_{[O,\, p_1]} \subset \rL_{[O,\,q)}\subset \rL_{[O,\,p_2]}
\end{equation}
where $p_1=\rL_{\sum_{i=1}^{k-1} v(E_i)}\cap \rL_{[O,\,v]} $  and  $p_2=\rL_{v(E_1)}\cap \rL_{[O,\,v]}$, see Figure \ref{fig:Q-integer point}.

Next,  one can regard $\rL_{w,v^+_\sigma}$ as a stability condition  in a natural way (by taking the projection of the one dimensional vector space determined by $\rL_{w,v^+_\sigma}$). Denote this stability condition by $\tau$. Then  for any two points $p,p'$  with  $\rL_{p,\,p'}$ parallel to $\rL_{w,v^+_\sigma}$, there is $Z_\tau (p) = Z_\tau (p')$. Using the inclusions \eqref{eq:inclusions}, we obtain the inequality
 \begin{equation}
 \frac{\big\lvert Z_{\tau}(v(E_1)) \big\rvert}{\big\lvert Z_{\tau}(\sum_{i=1}^{k-1} v(E_i))\big\rvert }=\frac{\big\lvert Z_{\tau}(p_2)\big\rvert }{ \big\lvert Z_{\tau}(p_1) \big\rvert }    = \frac{\|\rL_{[O,\,p_2]}\|}{\|\rL_{[O,\,p_1]}\|}>1.
\end{equation}
However, this contradicts to  the relation $ \sum_{i=1}^{k-1}\big\lvert Z_{\tau}(v(E_i)) \big\rvert = \big|Z_{\tau} \left(\sum_{i=1}^{k-1} v(E_i) \right)\big|$ which finishes the proof. 
\end{proof}

\begin{remark}\label{rem_triangle rule+}
If  $v(\widetilde E_i)$  already lies in $\bP_{O v v_\sigma^+} \setminus  \rL_{(O, v )}$ for all $i$, 
then  our  argument actually shows that we can always take a destabilising sequence $$F\hookrightarrow E \twoheadrightarrow Q$$ such that $v(F) \in \Omega^+_{v(E)}(\sigma)$. 
This will happen, for instance,  if $E=i_* F$  for some slope stable vector bundle  $F$  on $C$. 
\end{remark}

Then we can obtain a generalisation of Proposition \ref{prop_triangle rule}. 

\begin{proposition}\label{prop:triangle rule+}
Given a region $\cI\subseteq V(X)$, we define $$ \Omega_{v}(\cI)=\bigcup_{\sigma \in \cI}\Omega_v(\sigma)\quad \hbox{and}\quad \Omega^+_v (\cI) =\bigcup_{\sigma \in \cI}\Omega^+_v(\sigma).$$ Then any $\sigma\in \cI$ is not lying on a wall of any $E$ with $v(E) = v$ if and only if 
\begin{equation}\label{eq_no wall}
    \Omega^+_{v}(\cI)\cap \Halg(X) =\emptyset.
\end{equation}   
Similarly, any $E$ with $v(E) = v$ cannot be strictly $\sigma$-semistable for any $\sigma \in \cI$ if and only if 
\begin{equation}\label{eq_no sss}
    \Omega_{v}(\cI)\cap \Halg(X) =\emptyset.
\end{equation}   
\end{proposition}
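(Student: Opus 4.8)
## Proof strategy for Proposition \ref{prop:triangle rule+}

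The plan is to derive the four implications of the proposition from Lemma \ref{lemma_triangle rule} together with the basic geometry of the destabilising regions. Observe first that the two biconditionals are parallel: one concerns walls (hence strict semistability \emph{with a destabilising subobject of smaller mass}, captured by $\Omega^+$), the other concerns \emph{strict} $\sigma$-semistability outright (any nontrivial JH-factor, captured by the full $\Omega_v$). So I would prove the ``wall'' statement in detail and indicate the (easier, since no mass comparison is needed) modification for the ``strictly semistable'' statement.

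For the ``only if'' direction of \eqref{eq_no wall}: suppose some $\sigma \in \cI$ lies on a wall $\cW_E$ of an object $E$ with $v(E)=v$. Then Lemma \ref{lemma_triangle rule} produces an integer point $w \in \Omega^+_{v(E)}(\sigma) = \Omega^+_v(\sigma) \subseteq \Omega^+_v(\cI)$, so $\Omega^+_v(\cI) \cap \Halg(X) \neq \emptyset$. For the converse ``if'' direction, suppose there is an integer point $w \in \Omega^+_v(\sigma)$ for some $\sigma \in \cI$. I need to manufacture an object $E$ with $v(E)=v$ that is on a wall at $\sigma$. The construction is the standard one: by definition of $\Omega^+_v(\sigma)$ we have $w^2 \ge -2$ and $(v-w)^2 \ge -2$, and $w$ lies in the triangle $\bP_{Ov^+_\sigma v}$, off the diagonal $\rL_{(O,v)}$; this forces $\phi_\sigma(w) = \phi_\sigma(v)$ (they are colinear with $\sigma$, since $v^+_\sigma$ and $v^-_\sigma$ were defined via the line $\rL_{\sigma,\pi_v}$) while on an adjacent chamber the phases separate. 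Since $w$ and $v-w$ are effective classes of $\sigma$-semistable objects $A$, $B$ of the same phase as $v$ (using that roots $\delta$ with $\delta^2 \ge -2$ are represented by $\sigma$-semistable objects near a generic point of the segment — this is where Theorem \ref{thm:stab} and the wall-and-chamber structure of Proposition \ref{prop_chamber decomposition} enter), an extension $E$ of $B$ by $A$ has $v(E) = v$ and is strictly $\sigma$-semistable; moreover because $|Z_\sigma(A)| < |Z_\sigma(v)|$ (as $w$ is a \emph{proper} convex combination, lying strictly inside on the $v^+$ side), $A \hookrightarrow E$ destabilises $E$ on the adjacent chamber on one side, so $\sigma$ genuinely lies on a wall $\cW_E$. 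Then $\sigma \in \cI$ lies on a wall of this $E$, proving the contrapositive.

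For the second biconditional \eqref{eq_no sss}: the ``only if'' direction again uses Lemma \ref{lemma_triangle rule} — but note that if $\sigma$ lies on \emph{no} wall of $E$ yet $E$ is strictly $\sigma$-semistable, the JH-factors still have Mukai vectors in the parallelogram $\bP_{Ov^+_\sigma v v^-_\sigma}$ by the first paragraph of the proof of Lemma \ref{lemma_triangle rule}, and (reordering by angle as there) the partial sums give an integer point of $\Omega_v(\sigma)$ lying in $\bP_{Ov^+_\sigma v} \setminus \rL_{(O,v)}$ or its reflection; in either case $\Omega_v(\sigma) \cap \Halg(X) \neq \emptyset$. Conversely, given an integer point $w \in \Omega_v(\sigma)$, the same extension construction yields a strictly $\sigma$-semistable $E$ with $v(E)=v$ (here we do not need the strict mass inequality, so $w$ on the diagonal, i.e. in $\rL_{(O,v)}$, is allowed — but that case is the degenerate one where $w$ and $v$ are proportional, handled by taking a direct sum).

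The main obstacle I anticipate is the ``if'' direction — realising an abstract integer point of $\Omega^+_v(\sigma)$ as the Mukai vector of an honest destabilising subobject. One must check that $w$ and $v-w$, satisfying $w^2 \ge -2$ and $(v-w)^2 \ge -2$, are actually represented by $\sigma$-semistable objects in the heart $\Coh^{\beta(\sigma)}(X)$ of the correct phase. For classes with square $\ge -2$ this is a standard consequence of the existence of stability conditions (every such class, suitably oriented, supports a semistable object on a general point of any wall containing its projection); the careful point is the orientation/shift bookkeeping so that $w$ lands as a genuine subobject rather than $v-w$, which is exactly why the decomposition $\Omega_v = \Omega^+_v \sqcup \rL_{(O,v)} \sqcup \Omega^-_v$ and the normalisation $v^+_\sigma \in \{x \ge 0, z \ge 0\}$ were set up. I would phrase this step by invoking the converse half of the wall-crossing dichotomy already implicit in Proposition \ref{prop_chamber decomposition}, keeping the verification short.
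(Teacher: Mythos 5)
Your proposal is correct and follows essentially the same route as the paper: one implication is the contrapositive of Lemma~\ref{lemma_triangle rule+}, and the converse is obtained by realising an integer point $w\in\Omega^+_v(\sigma)$ as the Mukai vector of a $\sigma$-semistable object and forming a strictly semistable $E$ with $v(E)=v$ (the paper takes $F_1\oplus F_2$ where you take an extension, which is equivalent), with the same observation for \eqref{eq_no sss} that a strictly semistable object not on any wall has JH-factors proportional to $v$, hence integer points of $\rL_{(O,v)}\subseteq\Omega_v(\sigma)$. Only note that your ``if''/``only if'' labels are swapped relative to the statement, and that the existence of semistable objects representing $w$ and $v-w$ --- the point you rightly flag as delicate --- is asserted just as briefly in the paper's own proof.
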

\begin{proof}
The `if' part follows directly from Lemma~\ref{lemma_triangle rule+}. For the `only if' part, suppose there exists some stability condition $\sigma$ and an integer point $w \in \Omega_v(\sigma)$. Then, we can find $\sigma$-stable objects $F_1$ and $F_2$ such that $v(F_1)=w$ and $v(F_2)=v-w$, and $\sigma$ will be lying on a wall of $E\defeq F_1 \oplus F_2$ from the  construction. For the strictly semistable case, one just notes that the Mukai vectors of all the factors are lying on $\rL_{(O,v)}$.
\end{proof}

According to Proposition \ref{prop:triangle rule+}, we will say a Mukai vector $v\in \Halg(X)$ \textbf{admits no wall in $\cI$} if \eqref{eq_no wall} holds, and \textbf{admits no strictly semistable condition} if \eqref{eq_no sss} holds. 

Note that from the definition, one automatically has $\Omega_v(\sigma)=\Omega_v(\rL_{(\sigma',\sigma'')})$. This motivates us to find a subregion of $V(X)$ with regular boundary. A candidate is
\begin{equation}\label{eq_Gamma}
    \Gamma = \mybrace{(y,x)\in \RR^2}{x> gy^2, x < \sqrt{2/H^2} \text{ when } y=0 } \subseteq V(X)
\end{equation}
which is used in \cite{Feyzbakhsh2022}. As a consequence, if $v$ admits no wall in $I\subseteq \rL_{(o,o')}$, then it admits no wall in $\tri_{\pi_v}(I) \cap \Gamma$ as well.

\begin{remark}\label{rmk:square zero}
Comparing Proposition~\ref{prop_triangle rule} with Proposition~\ref{prop:triangle rule+}, one can conclude that for $v^2=0$ and $\cI$ being a line segment, the condition \eqref{eq_triangle rule} implies \eqref{eq_no wall}. Actually, if $\Omega^+_v(\sigma)$ contains any integer point $\delta$, then $\pi_\delta$ is a root lying in $\rL_{(\pi_v,\sigma]}$. This suggests that the condition \eqref{eq_triangle rule} can be replaced by $\tri_{\pi_v}(I) \subseteq V'(X)$, where
\begin{equation*}
V'(X) = \mybrace{(x,y)\in \RR^2}{x>\frac{H^2 y^2}{2}} \setminus \bigcup_{\delta \in \rR(X)} \{\pi_\delta\}.
\end{equation*}
\end{remark}




\section{The restriction map to Brill--Noether locus}\label{sec:resriction}

Given a positive primitive vector $v= (r,c,s)\in \Halg(X)$, let  $\bM(v)$ be  the moduli space  of $H$-Gieseker semistable sheaves on the surface $X$ with Mukai vector $v$. In this section,  we always assume  $r,c,s>0$  and 
 \begin{center}
     $\gcd(r,c)=1$ and $r>\frac{v^2}{2}+1$.
 \end{center}  
 Then $\bM(v)$ is a smooth  variety consisting of $\mu_H$-stable locally free sheaves (cf.~\cite[Remark 3.2]{Yo01}). The main result is

\begin{theorem}
\label{thm_restriction map}
For any irreducible curve $C\in |mH|$,  the restriction map is an injective morphism 
 \begin{equation*}
 \begin{aligned}
     \psi:  \bM(v) &\to \BN_C(v) ,\\
     E &\mapsto E|_C
 \end{aligned}
 \end{equation*}
with stable image (i.e.  $E|_C$ is stable) if the following conditions hold
  \begin{itemize}
     \item  $(mr-c)s>rc$;
     \item $v$ admits no wall in $\rL_{(o, \sigma_v]}$, where $\sigma_v=(\frac{rc}{(mr-c)s},0)$;
     \item $v(-m)=\big(r, c-mr, s+ (g-1)m(mr-2c) \big)$ admits no wall in $\tri_{\pi_{v(-m)}}(\rL_{(o, o']}) \cap \Gamma$, where $\Gamma$ is defined in \eqref{eq_Gamma}.
 \end{itemize}   
\end{theorem}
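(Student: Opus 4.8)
The plan is to pass from the curve to the surface through the defining exact sequence $0\to E(-mH)\to E\to i_*(E|_C)\to 0$ and to use the two no-wall hypotheses, via Proposition~\ref{prop:triangle rule+}, to control the Bridgeland stability of $i_*(E|_C)$ near the explicit stability condition $\sigma_v$. First I would dispatch the elementary part: since $\gcd(r,c)=1$ and $r>\tfrac{v^2}{2}+1$, every $E\in\bM(v)$ is a $\mu_H$-stable locally free sheaf with $\rk E=r$, $c_1(E)=cH$, $\chi(E)=r+s$ (cf.~\cite{Yo01}), so $E|_C$ is locally free on $C$ of rank $r$ and degree $c_1(E)\cdot C=2mc(g-1)$ --- the rank and degree appearing in $\BN_C(v)$. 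From $(mr-c)s>rc>0$ one gets $mr>c$, hence $E(-mH)$ is $\mu_H$-stable of negative slope with $H^0(X,E(-mH))=0$, while $H^2(X,E)\cong H^0(X,E^\vee)^\vee=0$ by Serre duality. The long exact sequence then gives $h^0(C,E|_C)\ge h^0(X,E)\ge\chi(E)=r+s$, so once $E|_C$ is shown to be slope-stable, $\psi$ maps $\bM(v)$ into $\BN_C(v)$; it is a morphism because it is induced by restricting a universal family on $\bM(v)\times X$.

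For the core claim that $E|_C$ is $\mu_C$-stable I would pass to Bridgeland stability. As $\beta=0<\mu_H(E)$, we have $E,\ i_*(E|_C)\in\Coh^0(X)$ and $E(-mH)[1]\in\Coh^0(X)$, so the defining sequence becomes a short exact sequence $0\to E\to i_*(E|_C)\to E(-mH)[1]\to 0$ in $\Coh^0(X)$. Gieseker-stability of $E$ gives $\sigma$-stability near the large-volume limit on the $x$-axis, and the hypothesis ``$v$ admits no wall in $\rL_{(o,\sigma_v]}$'' propagates this --- together with the triangle rule on $\tri_{\pi_E}(\rL_{(o,\sigma_v]})$ --- to a neighbourhood of the line $\rL_{\pi_E,\pi_{v(-m)}}$; the inequality $(mr-c)s>rc$ is exactly what puts $\sigma_v=(\tfrac{rc}{(mr-c)s},0)$ in $\rL_{(o,o']}$. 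Symmetrically, $v(-m)=v(E(-mH))$ has negative $H$-slope, so ``$v(-m)$ admits no wall in $\tri_{\pi_{v(-m)}}(\rL_{(o,o']})\cap\Gamma$'' makes $E(-mH)[1]$ $\sigma$-stable throughout that region. Because $v(E)=v(-m)+v(i_*(E|_C))$, the points $\pi_E$, $\pi_{v(-m)}$, $\pi_{i_*(E|_C)}$ are collinear, so along the wall $\rL_{\pi_E,\pi_{v(-m)}}$ the object $i_*(E|_C)$ is $\sigma$-semistable with Jordan--Hölder factors exactly $E$ and $E(-mH)[1]$ (both $\sigma$-stable there). Now if $E|_C$ were not $\mu_C$-stable, a proper destabilizing subbundle $F\subset E|_C$ would give a proper $\Coh^0$-subobject $i_*F\subset i_*(E|_C)$, hence (Lemma~\ref{lemma_triangle rule+} with Remark~\ref{rem_triangle rule+}, applicable since $i_*(E|_C)$ is a pushforward) an integer point of $\Omega^+_{v(E)-v(-m)}$ over the region swept between the large-volume limit and $\sigma_v$ --- i.e.\ a wall of $i_*(E|_C)$ other than $\rL_{\pi_E,\pi_{v(-m)}}$. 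A Riemann--Roch computation, turning $\mu_C(F)\ge\mu_C(E|_C)$ into the position of $v(i_*F)$ in $\Halg(X)$, would then contradict the no-wall hypothesis on $v$ or on $v(-m)$. Hence $E|_C$ is $\mu_C$-stable, so $\psi$ is a morphism with stable image.

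For injectivity I would recover $E$ canonically from $i_*(E|_C)$ at $\sigma_v$. By the above, at $\sigma_v$ the Harder--Narasimhan (equivalently, Jordan--H\"older) factors of $i_*(E|_C)$ are the $\sigma_v$-stable objects $E$, of Mukai vector $v$, and $E(-mH)[1]$, of Mukai vector $v(-m)$; since $m\ge 1$ these classes differ, so the factor of class $v$, namely $E$, is determined by $i_*(E|_C)$ alone --- the sign of $mr-2c$ fixes whether $\phi_{\sigma_v}(E)$ exceeds $\phi_{\sigma_v}(E(-mH)[1])$ and hence in which chamber $E$ is the maximal destabilizing subobject. Thus $E_1|_C\cong E_2|_C$ forces $i_*(E_1|_C)\cong i_*(E_2|_C)$ and so $E_1\cong E_2$; with the previous step this completes the proof.

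The main obstacle is the heart of the second step: showing that, apart from the expected wall $\rL_{\pi_E,\pi_{v(-m)}}$ on which $i_*(E|_C)$ splits off $E$, the rank-zero class $v(E)-v(-m)$ has no wall in the region swept between the large-volume limit and $\sigma_v$, which is where a curve-level destabilizer of $E|_C$ would create one. Decomposing this ``wall of a difference'' into the separate walls of $v$ and of $v(-m)$ is precisely what the destabilising regions $\Omega^{(+)}_v(\cdot)$ and Proposition~\ref{prop:triangle rule+} are designed for, but one still has to carry out the Mukai-pairing and Riemann--Roch bookkeeping that turns $\mu_C(F)\ge\mu_C(E|_C)$ into a lattice inequality, and to verify the geometry: that $(mr-c)s>rc$ places $\sigma_v$ on (resp.\ just past) the relevant wall, that $\tri_{\pi_E}(\rL_{(o,\sigma_v]})$ lies in $V(X)$, and that the $\Gamma$-region of the third hypothesis covers the part of $V(X)$ through which the wall of $i_*(E|_C)$ sweeps. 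The cohomology vanishings of the first step and the recovery argument for injectivity are then routine.
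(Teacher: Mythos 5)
Your overall architecture matches the paper's: restrict along $0\to E(-C)\to E\to i_*(E|_C)\to 0$, use the no-wall hypotheses to make $E$ and $E(-C)$ both $\sigma_v$-stable of the same phase (so that $i_*(E|_C)$ is $\sigma_v$-semistable with Jordan--H\"older factors $E$ and $E(-C)[1]$), get $h^0(C,E|_C)\ge r+s$ from the cohomology sequence, and recover $E$ from the JH factors for injectivity. Your shortcut on the section count is legitimate for the statement as given: injectivity of $\rH^0(X,E)\to\rH^0(C,E|_C)$ plus $h^2(X,E)=0$ already gives $h^0(C,E|_C)\ge\chi(E)=r+s$, so you may skip the paper's $\rH^1(X,E(-C))=0$ vanishing (the paper proves the stronger equality $h^0(X,E)=h^0(C,E|_C)$ because it is needed later). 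The injectivity argument is the same as the paper's.

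There is, however, a genuine gap in your core step, precisely at the passage from slope \emph{semistability} to slope \emph{stability} of $E|_C$. Your plan is: a destabilising subbundle $F\subset E|_C$ gives $i_*F\subset i_*(E|_C)$, hence via Lemma~\ref{lemma_triangle rule+} an integer point in a strictly destabilising region, contradicting the no-wall hypotheses on $v$ or $v(-m)$. If $\mu_C(F)>\mu_C(E|_C)$ strictly, you do not need any of this: $\phi_{\sigma_v}(i_*F)>\phi_{\sigma_v}(i_*(E|_C))$ directly contradicts the $\sigma_v$-semistability already established. But in the borderline case $\mu_C(F)=\mu_C(E|_C)$ --- which is exactly what distinguishes stability from semistability --- one computes $v(i_*F)=(0,m\,\rk F,\chi(F))$ with $\chi(F)/\rk F=\chi(E|_C)/r$, so $v(i_*F)$ is \emph{proportional} to $v(i_*(E|_C))=v-v(-m)$ and lies on the segment $\rL_{(O,\,v-v(-m))}$, which is excluded from every $\Omega^+$ by construction. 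Such a subobject produces no wall at all (the object is strictly semistable in an entire two-dimensional region, not along a wall), so Lemma~\ref{lemma_triangle rule+} yields nothing and no contradiction with the no-wall hypotheses on $v$ or $v(-m)$ can arise. The paper closes this case by a different mechanism: if $i_*(E|_C)$ were strictly $\sigma$-semistable for all $\sigma$ in a whole segment $\rL_{(o,a)}\subset\rL_{(o,\sigma_v)}$, the destabiliser $F_1$ must satisfy $\pi_{F_1}=\pi_{i_*(E|_C)}$, hence has the same $\sigma_v$-phase and is $\sigma_v$-semistable; its JH factors at $\sigma_v$ would then have to be drawn from $\{E,\ E(-C)[1]\}$, which is impossible since $v(F_1)=\lambda\bigl(v-v(-m)\bigr)$ with $0<\lambda<1$ is not an integral combination $av-b\,v(-m)$. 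You need this (or an equivalent) uniqueness-of-JH-factors argument; the ``extra wall'' route cannot supply it. Relatedly, even in cases where $v(i_*F)$ is not proportional, an integer point in $\Omega^+_{v-v(-m)}$ does not by itself contradict hypotheses phrased in terms of $\Omega^+_{v}$ and $\Omega^+_{v(-m)}$; decomposing a wall of the difference class into walls of the summands is the content of the much more involved Theorem~\ref{thm_1st wall} and Lemma~\ref{lem_factors}, which this theorem deliberately avoids.
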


\begin{figure}[ht]
\centering
\begin{tikzpicture}[domain=-3.5:3.5,trim right = 3cm,samples = 100]
    \coordinate (o) at (0,0);
	\draw (o) node [below] {$o$};
	\draw [->] (-3.5,0) -- (3.5,0) node [right]{$y$};
	\draw[->] (0,3) -- (0,5) node [above]{$x$};
	\path [name path = y-axis] (0,-1) -- (0,5);
       	\draw plot (\x,{0.4*(\x)^2}) node[above] {$x =\frac{H^2}{2}y^2$};
	
	\coordinate (E) at (3.5,4);
	\coordinate (F) at (-3,0);
    \filldraw [black] (E) circle [radius=1pt] node [below right] {$\pi_v$};
	\filldraw [black] (F) circle [radius=1pt] node [below] {$\pi_{i_\ast(E|_C)}$};

	\path [name path = link] (F) -- (E);
	
	\coordinate (Em) at ($ (F)!.2!(E)$);
	\draw [densely dotted] 
	        (o) -- (Em);
	\filldraw [black] (Em) circle [radius=1pt] node [above left] {$\pi_{v(-m)}$};

	\draw [name intersections={of=link and y-axis, by=sigma}] 
	        (o) -- (sigma);
	\coordinate (o') at (0,3);
    \draw (o') -- (sigma);
	\draw [densely dotted] (F) -- (E);
	\node[inner sep=0pt]  (sigma_1) at ($ (E)!.95!(0,0) $) {};
    \filldraw [black] (sigma_1) circle [radius=1pt] node [right] {$\sigma_1$};
    \draw [blue,opacity=0.4] (sigma) -- (sigma_1);
    \filldraw[fill = blue!20!white,opacity=0.2] (sigma_1) -- (sigma) -- (0,0) --cycle;

    \filldraw [black] (sigma) circle [radius=1pt] node [above right] {$\sigma_v$};
    \filldraw [black] (o') circle [radius=1pt] node [above right] {$o'$};
    \draw [densely dotted] (Em) --  (o');
    
    \node[inner sep=0pt]  (sigma_3) at ($ (Em)!.9!(0,3) $) {};
    \filldraw [black] (sigma_3) circle [radius=1pt] node [left] {$\sigma_3$};
    
    \node[inner sep=0pt]  (sigma_2) at ($ (Em)!.88!(o) $) {};
    \filldraw [black] (sigma_2) circle [radius=1pt] node [above left] {$\sigma_2$};
    \draw [blue,opacity=0.4] (sigma_2) -- (sigma_3);
    \draw [densely dotted] (o) -- (E);
    \filldraw [fill = blue!20!white,opacity=0.2] (sigma_2) -- (o) -- (o') -- (sigma_3) -- (sigma_2);
    
\end{tikzpicture}
\caption{}
\label{fig:BN Wall}
\end{figure}

\begin{proof} 
It suffices to prove that for any $E\in \bM(v)$, the restriction $E|_C$ is slope  stable with  $h^0(C, E|_C)\geq r+s$ and $E|_C$ uniquely determines $E$.  

Firstly, we show that  $E|_C$ is slope semistable.  
By \cite[Lemma 2.13 (b)]{Feyzbakhsh2020}, it suffices to show that $i_*(E|_C)$ is $\sigma_v$-semistable. 
Consider the  exact sequence 
\begin{equation}\label{eq_ses}
    0 \to E(-C) \to E \to i_\ast (E|_C) \to 0,
\end{equation} 
we have  $\pi_{i_\ast(E|_C)}= (\frac{r}{(g-1)(2c-mr)},0)$
is lying on $\rL_{\pi_{E}, \pi_{E(-C)}}$. 
Since  $E$ is  slope stable,  according to \cite[Lemma 2.15]{Feyzbakhsh2020}, $E$ is $\sigma$-stable for any $\sigma\in \rL_{(o, \pi_v)} \cap V(X) $.
Choose $\sigma_1 \in \rL_{ ( o, \pi_v)}$ sufficiently close to $o$. 
We have $$\bP_{o\sigma_v\sigma_1}\setminus \{o\} \subseteq \Gamma \subseteq  V(X)$$
as in Figure \ref{fig:BN Wall}. Note that for any line $\rL$ passing through $\pi_E$,  the intersection $\bP_{o\sigma_v\sigma_1}\setminus \{o\} \cap \rL$ is connected. 
By our assumption, $v$ admits no wall in $\rL_{(o,\sigma_v]}$. This implies it also admits no wall in  $\bP_{o\sigma_v\sigma_1}\setminus \{o\}$.  Hence $E$ is $\sigma_v$-stable as $E$ is $\sigma_1$-stable.
Similarly, we have $E(-C)$ is also $\sigma_v$-stable.  
As in the proof of  Proposition~\ref{prop_phasereading}, $E$ and $E(-C)$ are of the same $\sigma_v$-phase since $\sigma_v \in \rL_{\pi_{E},\pi_{E(-C)}} $. 
Hence the restriction $i_*(E|_C)$ is $\sigma_v$-semistable with $E$ and $E(-C)[1]$ as its JH-factors.

Secondly,  we show that  $E|_C$ is slope stable. 
By using \cite[Lemma 2.13 (b)]{Feyzbakhsh2020},  we are reduced to prove $i_*(E|_C)$ is $\sigma$-stable for some $\sigma \in \rL_{ (o,\sigma_v)} $. Moreover,  due to \cite[Lemma 2.13 (a)]{Feyzbakhsh2020},
$i_*(E|_C)$ is semistable for any stability condition lying in a  line segment  $ \rL_{( o,a)}\subseteq \rL_{( o,\sigma_v)}  $.
Suppose that $i_*(E|_C)$ is strictly semistable for all stability conditions in $\rL_{(o,a)} $.  
Then for any $\sigma_0 \in \rL_{(o, a)}$ and any destabilising sequence $$F_1 \hookrightarrow i_*(E|_C) \twoheadrightarrow F_2 \in \Coh^{\beta=0}(X)$$  such that $F_1,F_2$ are $\sigma_{0}$-semistable with the same $\sigma_0$-phase as $i_*(E|_C)$, we have   $\pi_{F_1}=\pi_{i_*(E|_C)}$.
This gives $\phi_{\sigma_v}(F_{1})=\phi_{\sigma_v}(i_\ast (E|_C))$, which implies that  $F_1$ is $\sigma_v$-semistable.
However,  this contradicts to the uniqueness of JH-factors of $i_\ast(E|_C)$ with respect to $\sigma_v$. 
Thus $i_*(E|_C)$ is $\sigma$-stable for some $\sigma \in \rL_{(o,a)}$. 

Next, to show $h^0(C, E|_C)\geq r+s$, let us consider the long exact sequence of cohomology induced by (\ref{eq_ses})\[
 0\rightarrow \rH^0 (X,E(-C)) \rightarrow \rH^0 (X,E) \rightarrow \rH^0 (C, E|_C)) \rightarrow \rH^1 (X,E(-C)) \rightarrow \dots   \]
As   $E(-C)$ is $\mu_H$-stable and $ \mu_H (E(-C))<0$, we have $$\rH^0 (X,E(-C))= \Hom_X (\cO_X, E(-C)) =0. $$ 

Then we  choose  $\sigma_2 \in \rL_{(\pi_{v(-m)} , o)}$ sufficiently close to $o$ and $\sigma_3 \in \rL_{(\pi_{v(-m)} , o')}$ sufficiently close to $o'$ so that $\bP_{o\sigma_2\sigma_3o'} \setminus \bigbrace{o,o'} \subseteq \Gamma$, see Figure \ref{fig:BN Wall}.
As shown above,  $E(-C)$ is $\sigma$-stable for any $\sigma\in \bP_{o\sigma_2\sigma_3 o'} \setminus \bigbrace{o,o'}$. 
In particular,  $E(-C)$ is $\sigma_3$-stable. 
According to \cite[Lemma 2.15]{Feyzbakhsh2020}, we have $$\bP_{o'\sigma_3\sigma_v} \setminus \{o'\} \subseteq V(X)$$ and $\cO_X$ is also $\sigma_v$-stable.  
Note that $\pi_{\cO_X} = o'$.
By Proposition \ref{prop_triangle rule} and Proposition \ref{prop_phasereading}, we know that $\cO_X$ is $\sigma_3$-stable and  $\phi_{\sigma_3}(E(-C))=\phi_{\sigma_3}(\cO_X)$.
Then we have 
$$\rH^1 (X,E(-C))= \Hom_\cA (\cO_X, E(-C)[1]) = 0 $$ where $\cA = \Coh^{\beta(\sigma_3)}(X)$. Therefore, we get an isomorphism $\rH^0 (X,E) \xlongrightarrow{\simeq} \rH^0 (C, E|_C)$. 
By Serre duality and the stability of $E$, we have  $\rH^2(X,E) \cong \Hom_X(E,\omega_X) \cong \Hom_X(E,\cO_X) =0 $. 
It follows that 
\begin{equation} \label{eq_global section X}
h^0(C,E|_C)  = h^0(X,E) \geq \chi(E) =r+s  .
\end{equation}  
This proves our claim. 

In the end,  the uniqueness of $E$ follows from the fact that the JH factors of $i_*(E|_C)$ are unique  with respect to $\sigma_v$. 
\end{proof}

\subsection*{A numerical criterion} 
As in \cite{Feyzbakhsh2020}, we would like to find a purely numerical condition for Theorem \ref{thm_restriction map} to hold. An elementary result is  
\begin{figure}[ht]
\centering
\begin{tikzpicture}[domain=-3.5:3.5,trim right = 4cm,samples = 100]
	\draw (0,0) node [below] {$o$};
	\filldraw [black] (0,3.5)  circle [radius=1pt] node [left] {$o'$};
	\draw [->] (-4,0) -- (4,0) node [right]{$y=c/s$};
	\draw[dashed,blue,opacity=0.4] (0,0) -- (0,5);
	\draw[->] (0,5) -- (0,5.5) node [above]{$x=r/s$};
	\path [name path = y-axis] (0,-1) -- (0,5);
	\path [name path = parabola] plot (\x,{0.4*(\x)^2}) ;
	\draw plot (\x,{0.4*(\x)^2}) node[right] {$x =\frac{H^2}{2}y^2$};
    \filldraw [black] (1.5, 1.8) circle [radius=1pt] node [above left] {$\pi_\delta$};
    \draw [densely dotted] (0,0) -- (3,3.6);
    \filldraw [black] (2, 1.6) circle [radius=1pt] node [below right] {$\pi_v$};
    \draw[dashed,blue,opacity=0.4] (0,0) -- (2,1.6);
    \draw[dashed,blue,opacity=0.4] (2.,1.6) -- (2,5);
    \fill [fill = blue!20!white,opacity=0.2] (0,0) -- (2,1.6) -- (2,5) -- (0,5) -- cycle;
\end{tikzpicture}
\caption{$\pi_\delta$  in the interior of  $\bP^\circ_{o\pi_v\infty}$ (colored area)}
\label{fig:good region}
\end{figure}

\begin{lemma}\label{lem_no int}
Let $\bP_{o\pi_v \infty}$ be the trapezoidal region bounded by $\rL_{[o,\pi_v]}$, the (positive) half $x$-axis $\rL_{[o,\infty)}$ and the vertical ray $\rL_{[\pi_v,\infty)}$ in Figure \ref{fig:good region}.
Then $v$ admits no wall in $\bP_{o\pi_v \infty} \cap \Gamma$ if one of the following conditions holds
\begin{enumerate}
    \item $v^2=0 \,$ and $\, r/\gcd(r,c) \leq g-1$.
    \item  $ s=\lfloor \frac{(g-1)c^2+1}{r} \rfloor$  and $\,  g-1 \geq \max \{ \frac{r^2}{c}, r+1\}$
\end{enumerate}
\end{lemma}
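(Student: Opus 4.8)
The plan is to recast the statement as a lattice--point count in the destabilising region via Proposition~\ref{prop:triangle rule+}, and then to treat the cases $v^2=0$ and $v^2>0$ by different means: for $v^2=0$ the problem collapses to the two--dimensional ``roots on a line'' picture, whereas for $v^2>0$ one must work with the full three--dimensional region. For the reduction: by Proposition~\ref{prop:triangle rule+} the assertion is equivalent to $\Omega^+_v(\cR)\cap\Halg(X)=\emptyset$ with $\cR\defeq\bP_{o\pi_v\infty}\cap\Gamma$. Both $\{x>gy^2\}$ and the trapezoid $\bP_{o\pi_v\infty}$ are convex, so $\cR$ is convex, and since $\cR\subseteq\Gamma$ forces every point of $\cR$ to have small $x$--coordinate, each point of $\cR$ lies on a segment $\rL_{(\pi_v,q]}$ with $q$ in some subsegment $I\subseteq\rL_{(o,o')}$ of the $x$--axis; by the propagation property recorded after \eqref{eq_Gamma} it therefore suffices to show that $v$ admits no wall on $I$, i.e.\ that $\Omega^+_v(\sigma)\cap\Halg(X)=\emptyset$ for every $\sigma\in I$. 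Note that the standing hypothesis $r>v^2/2+1$ gives $r\ge v^2/2+2$, used repeatedly below.

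\emph{The case $v^2=0$.} Here $\pi_v\in\partial V(X)$, so Remark~\ref{rmk:square zero} applies: a lattice point of $\Omega^+_v(\sigma)$ is the Mukai vector of a root $\delta$ with $\pi_\delta\in\rL_{(\pi_v,\sigma]}$, so, passing to the refinement $V'(X)$ of that remark, it is enough by Proposition~\ref{prop_triangle rule} to verify that $\tri_{\pi_v}(I)$ contains no root projection. Write a primitive root as $\delta=(a,b,d)$, so $ad=b^2(g-1)+1$, $\gcd(a,b)=1$, $a\ge1$; by observation~\ref{obs-B} the ray $\rL_{o,\pi_\delta}$ is the line $ay=bx$, its slope $b/a$ lies in $[0,c/r]$, and $\pi_\delta$ has $x$--coordinate $a^2/(b^2(g-1)+1)$. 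Since $rs=c^2(g-1)$ we have $c^2(g-1)\equiv0\pmod r$, hence $r\mid g-1$ (so the hypothesis $r/\gcd(r,c)\le g-1$ is in fact automatic here) and, for $r\ge2$, the line $\rL_{o,\pi_v}$ carries no root projection; the root with $b=0$ is $\cO_X$, whose projection $o'$ lies outside $\Gamma$; and a direct estimate with $ad=b^2(g-1)+1$ shows that a root with $b\ge1$ and $br\le ac$ whose projection falls in the cone $\tri_{\pi_v}(\cR)$ would force $a$ too small for $d=(b^2(g-1)+1)/a$ to be an integer. Hence $\tri_{\pi_v}(I)$ contains no root projection and there is no wall in $\cR$.

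\emph{The case $v^2=2n>0$.} Now $\pi_v\notin V(X)$ and one must use the whole region $\Omega^+_v(\sigma)$. Observe first that $s=\lfloor\frac{(g-1)c^2+1}{r}\rfloor$ is exactly the value making $v^2$ minimal for the given $(r,c)$, so $-2\le v^2\le 2(r-2)$ and in particular $v^2/2+2\le r$. Suppose $w\in\Omega^+_v(\sigma)$ is a lattice point for some $\sigma\in I$, and write $w=(r',c',s')$. By definition $w^2\ge-2$ and $(v-w)^2\ge-2$, and the parallelogram $\bP_{Ov^+_\sigma v v^-_\sigma}$ — which is controlled because $\sigma$ lies near $\partial V(X)$ — confines the rank and slope of $w$: using the symmetry $w\leftrightarrow v-w$ one may normalise so that $1\le r'<r$ and $0<\mu_H(w)=c'/r'<\mu_H(v)=c/r$, the degenerate cases $r'\in\{0,r\}$ (where one of $w$, $v-w$ is a torsion class) being disposed of separately. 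Now $\langle w,v-w\rangle=\tfrac12\bigl(v^2-w^2-(v-w)^2\bigr)\le v^2/2+2\le r$; expanding this together with $w^2\ge-2$ and $(v-w)^2\ge-2$ in the coordinates $(r',c')$ produces a Bogomolov--type inequality for $(r',c')$ that is incompatible with $g-1\ge\max\{r^2/c,\ r+1\}$. Hence no such $w$ exists and $v$ admits no wall in $\cR$. (The same estimate, applied to the twist $v(-m)=e^{-mH}v$, governs the third hypothesis of Theorem~\ref{thm_restriction map}.)

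\emph{Main obstacle.} The case $v^2=0$ is essentially the divisibility bookkeeping of observation~\ref{obs-B} once Remark~\ref{rmk:square zero} is in hand, so the crux is the case $v^2>0$: one cannot collapse to a two--dimensional picture, the region $\Omega^+_v(\sigma)$ degenerates as $\sigma\to\partial V(X)$, and one must track the parallelogram $\bP_{Ov^+_\sigma v v^-_\sigma}$ and the two hyperboloids $u^2=-2$, $(u-v)^2=-2$ simultaneously and uniformly over $\sigma\in\cR$. More delicately, the analysis must be squeezed hard enough to yield the \emph{sharp} threshold $\max\{r^2/c,\ r+1\}$ rather than a cruder bound; I expect the leverage to come from comparing at an $x$--axis stability condition, where the extremal choice of $s$ leaves essentially no room between the two hyperboloids for a lattice point to sit.
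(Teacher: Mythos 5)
Your reduction via Proposition~\ref{prop:triangle rule+} to the statement $\Omega^+_v(\cR)\cap\Halg(X)=\emptyset$ is the right starting point and matches the paper, but the next step already contains an error: you claim that ``$\cR\subseteq\Gamma$ forces every point of $\cR$ to have small $x$--coordinate'' and hence that every point of $\cR$ lies on a segment $\rL_{(\pi_v,q]}$ with $q\in\rL_{(o,o')}$. This is false. The region $\Gamma$ of \eqref{eq_Gamma} imposes the upper bound $x<\sqrt{2/H^2}$ only on the slice $y=0$; for $y\neq 0$ it is the condition $x>gy^2$, with no upper bound, so $\bP_{o\pi_v\infty}\cap\Gamma$ is unbounded in the $x$--direction and is \emph{not} contained in $\tri_{\pi_v}(\rL_{(o,o')})$. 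The reduction to a segment $I$ on the axis therefore does not go through; one must argue directly on the trapezoid, as the paper does (for (i) by showing $\bP^\circ_{o\pi_v\infty}\subseteq V(X)$, for (ii) by bounding $\Omega^+_v(\sigma)$ uniformly over $\sigma\in\bP_{o\pi_v\infty}\cap\Gamma$).

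More seriously, both decisive computations are asserted rather than carried out, and in case (ii) the mechanism you propose is unlikely to close. In case (i) the actual contradiction is not that ``$a$ is too small for $d$ to be an integer'': writing the root as $(r',c',s')$, the two positional inequalities $\tfrac{c'}{r'}<\tfrac{c}{r}$ and $\tfrac{c'}{s'}<\tfrac{c}{s}$, combined with $2rs=c^2(2g-2)$ and $2r's'=(c')^2(2g-2)+2$ and the hypothesis $r/\gcd(r,c)\le g-1$, squeeze the integer $\tfrac{c}{\gcd(r,c)}r'$ strictly between $\tfrac{r}{\gcd(r,c)}c'$ and $\tfrac{r}{\gcd(r,c)}c'+1$; this chain is the whole content of part (i) and is absent from your write-up. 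In case (ii), the inequalities $w^2\ge-2$, $(v-w)^2\ge-2$ and $\langle w,v-w\rangle\le v^2/2+2$ are invariant under many changes of $(r',c',s')$ and by themselves cannot detect the threshold $g-1\ge\max\{r^2/c,\,r+1\}$; the essential input is positional. The paper fixes the $y$--coordinate $y_0$ of the putative lattice point (with $0<y_0\le c$), locates the point inside the triangle with vertices $u_1=(\tfrac{ry_0}{c},y_0,\tfrac{gcy_0}{r})$, $u_2=(\tfrac{ry_0}{c},y_0,\tfrac{sy_0}{c})$, $u_3=(\tfrac{gcy_0}{s},y_0,\tfrac{sy_0}{c})$, and shows that the admissible interval $\tfrac{ry_0}{c}<x<\tfrac{y_0^2(g-1)+1}{sy_0/c}$ has length less than $1/c$, so that $cx$ would be an integer strictly between $ry_0$ and $ry_0+1$. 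Your ``Bogomolov--type inequality incompatible with the hypothesis'' is precisely the step that needs to be exhibited, and as written the proposal does not contain it.
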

\begin{proof}
(i).  By Proposition \ref{prop_triangle rule}, it will be sufficient to show that 
$$\bP^\circ_{o \pi_v \infty} \subseteq V(X).$$
Due to the explicit description of $V(X)$ in \eqref{eq_structure of stab_condit },  this is equivalent to showing that there is no projection of root lying in $\bP^\circ_{o\pi_v\infty}$.  Suppose there exists a root $\delta=(r',c',s')\in \rR(X)$ with $\pi_\delta\in \bP^\circ_{o\pi_v\infty}$. Then we have 
\begin{equation}\label{eq:root in region}
\frac{c'}{r' }<\frac{c}{r}\quad\text{and}\quad\frac{c'}{s'}<\frac{c}{s},
\end{equation}
see Figure \ref{fig:good region}. Note that $2rs=c^2(2g-2)$ and $2r's'=(c')^2(2g-2) +2$, one can plug into  \eqref{eq:root in region} to get
\begin{equation} \label{eq:root in region+}
    \frac{r}{\gcd(r,c)} c' < \frac{c}{\gcd(r,c)} r'< \frac{r}{\gcd(r,c)} c' + \frac{r}{\gcd(r,c)(g-1)c'}\leq \frac{r}{\gcd(r,c)} c'+1.
\end{equation}
which is not possible. 

(ii). According to Proposition \ref{prop:triangle rule+}, we just need to show that $ \Omega^+_{v}(\sigma)\cap \Halg(X) =\emptyset$ for any $\sigma \in\bP_{o\pi_v \infty} \cap \Gamma $ .
Suppose there is an integer point $(x,y,z)\in \Omega^+_v( \sigma_0) $ for some $\sigma_0\in \bP_{o\pi_v \infty} $. 
By the construction of $\Omega^+_{v}(\bP_{o\pi_v \infty} \cap \Gamma)$,  we have $0<y\leq c$ and the point $(x,y,z)$ is lying in the interior of the triangle $\bP_{u_1 u_2 u_3}$ with vertices
\begin{center}
    $u_1=(\frac{ry}{c}, y,\frac{gcy}{r})$, $u_2=(\frac{ry}{c}, y,\frac{sy}{c})$  and $u_3=(\frac{gcy}{s}, y,\frac{sy}{c})$. 
\end{center}
  As  $ y^2(g-1)+1\geq xz$ and $z\geq \frac{sy}{c}$,
one has  $$
 \frac{ry}{c}<x< \frac{y^2(g-1)+1}{sy/c}.$$ 
Note that $c^2(g-1)-\frac{v^2}{2}=rs$, the condition $s=\lfloor \frac{(g-1)c^2+1}{r} \rfloor$ is equivalent to  $r>\frac{v^2}{2}+1$.
Then we have 
\begin{equation}
\begin{aligned}
 0< \frac{y^2(g-1)+1}{sy/c}-  \frac{ry}{c} &<  \max\left\{\frac{gc}{s}-\frac{r}{c},  \frac{c^2(g-1)+1}{s}-r\right\}
    \\    & = \max\left\{\frac{r(c^2+\frac{v^2}{2})}{c(c^2(g-1)-\frac{v^2}{2})},  \frac{r(\frac{v^2}{2}+1)}{c^2(g-1)-\frac{v^2}{2}}\right\}\\
    &\leq \max\left\{\frac{r(c^2+r-2)}{c(c^2(g-1)-r+2)},  \frac{r^2-r}{c^2(g-1)-r+2}\right\} \\& \leq  \frac{1}{c}.
\end{aligned}
\end{equation}
Here, the last inequality follows from our assumption $g-1\geq\max\{\frac{r^2}{c},r+1\}$. 
 This means $0<x-\frac{ry}{c}<\frac{1}{c}$
 which contradicts to the fact $x$ is an integer. 
\end{proof}

We summarise our numerical criterion as follows.
\begin{corollary}\label{cor:inj num}
The restriction map $\psi:  \bM(v)\to \BN_C(v)$ is an injective morphism  with stable image if
\begin{equation}\label{eq:inj-cond}
\begin{aligned}
    r>\max\left\{\frac{v^2}{2}+1,\frac{c}{m}\right\}, \quad c>0, \quad s>\frac{rc}{mr-c}, \quad \gcd(r,c)=1\\
\end{aligned}
\end{equation}
and \begin{equation}\label{eq:inj-cond2}
    g-1\geq \begin{cases} r, & \hbox{if $v^2=0$}\\ \max \{ \frac{r^2}{c} ,\frac{r^2}{mr-c},r+1\}, & \hbox{if $v^2>0$.}
    \end{cases}
\end{equation}

\end{corollary}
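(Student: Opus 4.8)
The plan is to derive Corollary~\ref{cor:inj num} from Theorem~\ref{thm_restriction map} by checking that the three bulleted hypotheses of that theorem follow from the numerical inequalities \eqref{eq:inj-cond} and \eqref{eq:inj-cond2}. First, I would observe that $\gcd(r,c)=1$ and $r>\tfrac{v^2}{2}+1$ are exactly the standing assumptions of Section~\ref{sec:resriction}, so $\bM(v)$ is a smooth variety of $\mu_H$-stable locally free sheaves; these two conditions are contained in \eqref{eq:inj-cond}. Next, the hypothesis $(mr-c)s>rc$ of Theorem~\ref{thm_restriction map} is literally $s>\tfrac{rc}{mr-c}$ (note $mr-c>0$ because $r>\tfrac{c}{m}$), which is again part of \eqref{eq:inj-cond}; this also guarantees $\sigma_v=(\tfrac{rc}{(mr-c)s},0)$ is a well-defined point with positive first coordinate, and one checks it lies in $V(X)$ since its $y$-coordinate is $0$ and the only root-projection on the $x$-axis is $o'=(1,0)$, which sits to the right of $\sigma_v$ precisely when $(mr-c)s>rc$.

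The substantive step is verifying the two "admits no wall" conditions, and here I would invoke Lemma~\ref{lem_no int}. For the second bullet of Theorem~\ref{thm_restriction map} ($v$ admits no wall in $\rL_{(o,\sigma_v]}$): since $\sigma_v$ lies on the $x$-axis and, by inspection of Figure~\ref{fig:good region}, $\rL_{(o,\sigma_v]}\subseteq \bP_{o\pi_v\infty}\cap\Gamma$ whenever $\sigma_v$ is to the left of the vertical ray $\rL_{[\pi_v,\infty)}$, it suffices to apply Lemma~\ref{lem_no int} to $v$ itself. When $v^2=0$, part~(i) of the lemma applies because $r/\gcd(r,c)=r\le g-1$ by \eqref{eq:inj-cond2}; when $v^2>0$, part~(ii) applies because $s=\lfloor\tfrac{(g-1)c^2+1}{r}\rfloor$ (this is equivalent to $r>\tfrac{v^2}{2}+1$, as noted in the proof of Lemma~\ref{lem_no int}) and $g-1\ge\max\{\tfrac{r^2}{c},r+1\}$ by \eqref{eq:inj-cond2}. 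For the third bullet, one repeats the argument with $v(-m)=(r,\,c-mr,\,s+(g-1)m(mr-2c))$ in place of $v$: the Mukai vector $v(-m)$ is again primitive with the same rank $r$ and with $v(-m)^2=v^2$; its "$c$-coordinate" is $mr-c$ up to sign, so the roles of $c$ and $mr-c$ are swapped, and the relevant bound becomes $g-1\ge\max\{\tfrac{r^2}{mr-c},r+1\}$ in the $v^2>0$ case and $r\le g-1$ in the $v^2=0$ case — both supplied by \eqref{eq:inj-cond2}. One must also check $\tri_{\pi_{v(-m)}}(\rL_{(o,o']})\cap\Gamma$ is contained in the trapezoid $\bP_{o\pi_{v(-m)}\infty}\cap\Gamma$ of Lemma~\ref{lem_no int} (after possibly reflecting $y\mapsto -y$, which is harmless since $\Gamma$ and $V(X)$ are symmetric), so that the lemma indeed yields the no-wall conclusion on the region named in Theorem~\ref{thm_restriction map}.

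I expect the main obstacle to be the bookkeeping around $v(-m)$: one has to confirm that $e^{-mH}v$ really has the displayed coordinates, that it remains a positive primitive vector satisfying the sign conditions needed for Lemma~\ref{lem_no int} (in particular $mr-c>0$, which is where $r>c/m$ enters), and that the triangular region $\tri_{\pi_{v(-m)}}(\rL_{(o,o']})$ is swept out correctly so that applying Lemma~\ref{lem_no int} to $v(-m)$ covers exactly the region required by Theorem~\ref{thm_restriction map}. Everything else is a direct substitution. Once all three bullets of Theorem~\ref{thm_restriction map} are verified under \eqref{eq:inj-cond}–\eqref{eq:inj-cond2}, the conclusion — that $\psi$ is an injective morphism with stable image — follows immediately, completing the proof. $\clubsuit$
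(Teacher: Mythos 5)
Your proposal is correct and follows essentially the same route as the paper: the authors likewise reduce Corollary~\ref{cor:inj num} to Theorem~\ref{thm_restriction map} by noting that $mr>c>0$ places $\pi_v$ and $\pi_{v(-m)}$ in opposite quadrants, that $s(mr-c)>rc$ puts $\sigma_v$ strictly between $o$ and $o'$, and that $\rL_{(o,\sigma_v]}\subseteq \tri_{\pi_w}(\rL_{[o,o']})\cap\Gamma\subseteq \bP_{o\pi_w\infty}\cap\Gamma$ for $w=v$ and $w=v(-m)$, so that Lemma~\ref{lem_no int} supplies both no-wall conditions. Your additional bookkeeping (the equivalence of $s=\lfloor\frac{(g-1)c^2+1}{r}\rfloor$ with $r>\frac{v^2}{2}+1$, the swap of $c$ and $mr-c$ for $v(-m)$, and the harmless reflection $y\mapsto -y$) is exactly what the paper's phrase ``direct computation'' elides.
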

\begin{proof}
The condition $mr>c>0$ ensures $\pi_v$ lies in the first quadrant while $\pi_{v(-m)}$ lies in the second quadrant, and the condition $s(mr-c)>rc$ ensures $\sigma_v$ is below $o'$. The assertion then follows from  the  direct computation that $\rL_{(o,\sigma_v]} \subseteq \tri_{\pi_w}(\rL_{[o, o']}) \cap \Gamma \subseteq \bP_{o\pi_w \infty} \cap \Gamma $ for $w = v$ or $v(-m)$.
\end{proof}
\begin{remark}
Under the assumption $r>c$, the conditions in Corollary \ref{cor:inj num} can be easily reduced to \eqref{eq:int-inj}. 
\end{remark}

\section{Surjectivity of the restriction map}\label{sec:surj}

Throughout this section, we let $v=(r,c,s)\in\Halg(X)$ be a positive vector satisfying  \eqref{eq:inj-cond} and \eqref{eq:inj-cond2}.  Due to Corollary \ref{cor:inj num},  the restriction map $$\psi:\bM(v)\to \BN_C(v)$$ is an injective morphism with stable image.  Following the ideas in \cite{Feyzbakhsh2020,Feyzbakhsh2020+}, we give sufficient conditions  such that  $\psi$  is surjective.

\subsection*{The first wall}As in \cite{Feyzbakhsh2020}, we first describe the wall that bounds the Gieseker chamber of $i_\ast F$ for $F\in \BN_C(v)$.   The following result is an extension of \cite[Proposition 4.2]{Feyzbakhsh2020}. 
\begin{theorem}\label{thm_1st wall}
For any $F\in \BN_C(v)$,  the wall that bounds the Gieseker chamber of $i_*F$ is not below the line  $\rL_{\pi_v, \pi_{v(-m)}}$, and they coincide if and only if $F=E|_C$ for some $E\in \bM(v)$.
\end{theorem}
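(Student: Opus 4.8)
The plan is to run the wall‑crossing argument of \cite[Proposition~4.2]{Feyzbakhsh2020}, replacing its explicit case analysis of potential walls by the destabilising‑region criterion of Section~\ref{sec:des-region} (Lemma~\ref{lemma_triangle rule+}, Remark~\ref{rem_triangle rule+} and Proposition~\ref{prop:triangle rule+}); this is what lets the non‑primitive class $C\in|mH|$ be handled uniformly. Throughout write $w\defeq v(i_\ast F)=v-v(-m)$, so that $\pi_v$, $\pi_{v(-m)}$ and $\pi_{i_\ast F}$ lie on the common line $\cW_0\defeq\rL_{\pi_v,\pi_{v(-m)}}$, which meets the segment $\rL_{(o,o']}$ in the point $\sigma_v$. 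Since $C$ is integral and $F$ is slope stable on $C$, $i_\ast F$ is a Gieseker‑stable pure sheaf on $X$, hence $\sigma$‑stable in its Gieseker chamber --- the chamber of $V(X)$ adjacent to $o$. As already observed in the proof of Theorem~\ref{thm_restriction map}, near $\sigma_v$ the relevant heart is $\Coh^0(X)$ and it contains $i_\ast F$, $\cO_X$ and the twists $E(-C)$.

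\emph{``$F=E|_C\Rightarrow$ the walls coincide''.} The proof of Theorem~\ref{thm_restriction map} shows that for $E\in\bM(v)$ the sheaves $E$ and $E(-C)$ are $\sigma_v$‑stable of equal $\sigma_v$‑phase, so the exact sequence $0\to E(-C)\to E\to i_\ast(E|_C)\to 0$ exhibits $i_\ast(E|_C)$ as strictly $\sigma_v$‑semistable with Jordan--H\"older factors $E$ and $E(-C)[1]$; hence $\cW_0$ carries a wall of $i_\ast(E|_C)$. That proof also gives $\sigma$‑stability of $i_\ast(E|_C)$ on all of $\rL_{(o,\sigma_v)}$, so no wall of $i_\ast(E|_C)$ separates $o$ from $\sigma_v$; since the bounding wall of the Gieseker chamber is a line through $\pi_{i_\ast(E|_C)}$ (Proposition~\ref{prop_chamber decomposition}) passing through $\sigma_v$, it is exactly the component of $\cW_0\cap V(X)$ through $\sigma_v$.

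\emph{The Gieseker wall is not below $\cW_0$.} Suppose, for contradiction, the Gieseker wall $\cW$ lies strictly between $o$ and $\cW_0$; then $i_\ast F$ is $\sigma$‑stable at every point of $\cW_0$. Fix $\sigma_0\in\cW$ and a destabilising sequence $0\to A\to i_\ast F\to B\to 0$ in $\Coh^{\beta(\sigma_0)}(X)$ with $A,B$ $\sigma_0$‑semistable of the phase of $i_\ast F$. The cohomology sequence gives $\coho^{-1}(A)=0$, so $A$ is a sheaf; $\coho^{-1}(B)=\ker(A\to i_\ast F)$ is torsion free with $\mu_H$‑slopes $\le\beta(\sigma_0)$, and the image of $A$ in $i_\ast F$ is $i_\ast F''$ for a subsheaf $F''\subseteq F$ of the bundle $F$. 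By Lemma~\ref{lemma_triangle rule+}, $v(A)\in\Omega^+_w(\cW)$, and ``$\cW$ below $\cW_0$'' translates into $\pi_{v(A)}$ lying strictly on the $o$‑side of $\cW_0$. The contradiction now comes from the Brill--Noether inequality $\dim\Hom_X(\cO_X,i_\ast F)=h^0(C,F)\ge r+s$: controlling the positions of $\cO_X$ and $E(-C)$ near $\cW_0$ via the no‑wall hypothesis on $v(-m)$ and \cite[Lemma~2.15]{Feyzbakhsh2020} as in Theorem~\ref{thm_restriction map}, and using Riemann--Roch on $C$, one shows that a wall below $\cW_0$ forces the section counts of $F''$ and $F/F''$ to be incompatible with $\sigma$‑stability of $i_\ast F$ on $\cW_0$ (a nonzero morphism of non‑decreasing phase would appear), a contradiction. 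This step is the crux, and the place where the non‑primitive case costs extra work: in general $A$ is \emph{not} a subsheaf of $i_\ast F$ --- only an extension of $i_\ast F''$ by a torsion‑free sheaf --- so the naive ``a Gieseker‑stable sheaf has no destabilising subsheaf'' shortcut fails, and one must exploit the precise geometry of $\Omega^+_w$ together with $h^0(C,F)\ge r+s$ and the no‑wall control on $v(-m)$.

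\emph{``The walls coincide $\Rightarrow F=E|_C$''.} If $\cW=\cW_0$, then $i_\ast F$ is strictly $\sigma_v$‑semistable, so by Remark~\ref{rem_triangle rule+} there is a destabilising sequence $A\hookrightarrow i_\ast F\twoheadrightarrow B$ with $v(A)\in\Omega^+_w(\sigma_v)$; since $\cW=\cW_0$ one has $\pi_{v(A)}\in\cW_0$, and --- using the no‑wall hypotheses to rule out the remaining lattice points of $\mathrm{span}_\RR(v,v(-m))$ lying in $\Omega^+_w(\sigma_v)$ --- necessarily $v(A)=v$, so $v(B)=-v(-m)$ and $B[-1]$ is a sheaf with $v(B[-1])=v(-m)$. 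The no‑wall hypothesis on $v$ in $\rL_{(o,\sigma_v]}$ makes the $\sigma_v$‑stable object $E\defeq A$ a $\mu_H$‑stable locally free sheaf, i.e.\ $E\in\bM(v)$, and the cohomology sequence of $0\to E\to i_\ast F\to B\to 0$ in $\Coh^0(X)$ reads $0\to B[-1]\to E\to i_\ast F\to 0$ in $\Coh(X)$; since $i_\ast F$ is scheme‑theoretically supported on $C$ we get $E(-C)\subseteq B[-1]$, and comparison of Hilbert polynomials forces $B[-1]=E(-C)$, whence $i_\ast F\cong E/E(-C)=i_\ast(E|_C)$ and $F\cong E|_C$. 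The only secondary subtlety here is this last identification of $B[-1]$ with $E(-C)$, which is exactly where the support of $i_\ast F$ on $C$ is used.
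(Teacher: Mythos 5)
The first and last parts of your outline are broadly consistent with the paper, but the central step --- ruling out a Gieseker wall strictly below $\rL_{\pi_v,\pi_{v(-m)}}$ --- is not actually proved, and the mechanism you propose for it is not the one that works. You reduce the contradiction to the Brill--Noether inequality $h^0(C,F)\geq r+s$ together with an unspecified ``incompatibility of section counts of $F''$ and $F/F''$''. The paper's proof of Theorem~\ref{thm_1st wall} makes no use of $h^0(C,F)\geq r+s$ at all (that hypothesis only enters later, in Theorem~\ref{thm_polygons} and Theorem~\ref{thm:surj} via Proposition~\ref{prop_upper bound}); the statement is really about any slope-stable bundle with Mukai vector $v|_C$. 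What the argument actually requires is: (a) the slope bounds $\mu_H^-(F_1/T)\geq c/r$ and $\mu_H^+(\coho^{-1}(F_2))\leq (c-mr)/r$ for a suitably chosen destabilising sequence, which is the content of Lemma~\ref{lem_factors} and is where the destabilising-region geometry of Section~\ref{sec:des-region} is genuinely used; (b) the \emph{reverse} inequality $\mu_H(F_1/T)-\mu_H(\coho^{-1}(F_2))\leq m$, obtained by restricting the four-term cohomology sequence \eqref{eq_des coho} to $C$ and counting ranks (this is where the torsion subsheaf $T$ of $F_1$, which you correctly identify as the new difficulty in the non-primitive case, is controlled via $\rk(i^*T)=\hat c/m$); and (c) the forced equality, which together with $\gcd(r,c)=1$ kills the torsion and pins down $v(F_1)=(r,c,s')$ with $s'>s$, contradicting the discriminant bound $v(F_1)^2\geq -2$ for the $\mu_H$-stable sheaf $F_1$. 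None of (a)--(c) appears in your sketch, so the crux of the theorem is missing.

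A secondary gap: in the direction ``walls coincide $\Rightarrow F=E|_C$'' you assert that the destabilising subobject must have $v(A)=v$ by ``ruling out the remaining lattice points of $\mathrm{span}_\RR(v,v(-m))$ in $\Omega^+_w(\sigma_v)$'', but this is exactly what needs proof; the identification $v(F_1)=v$ again comes out of the equality case of the slope/rank computation above, after which the paper concludes by applying Theorem~\ref{thm_restriction map} to $F_1$ itself and showing the induced map $i_*(F_1|_C)\to i_*F$ is an isomorphism of stable sheaves of equal slope (your alternative via supports and Hilbert polynomials would also be fine once $v(A)=v$ is established).
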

\begin{proof}
We will prove that for any $v$ under the condition \eqref{eq:inj-cond}, if both $v$ and $v(-m)$ admit no wall in $(o, \sigma_v ]$, then so does $$v|_C\defeq v-v(-m).$$ Let $\cW_{i_*F}$ be the first wall and let $\sigma_{\alpha',0}\in \cW_{i_*F} $ be a stability condition. 
Suppose $\cW_{i_\ast F}$ is below or on  the line $\rL_{\pi_v, \pi_{v(-m)}}$.
Then there exists a destabilising sequence 
\begin{equation}\label{eq:destab}
   F_1 \hookrightarrow i_*F \twoheadrightarrow F_2 
\end{equation}
in $\Coh^{\beta=0}(X)$  such that $F_1,F_2$ are $\sigma_{\alpha',0}$-semistable, and \begin{equation}\label{eq:wall}
    \phi_{\alpha,0}(F_1)> \phi_{\alpha,0}(i_* F)\quad \text{for} \quad  \alpha<\alpha'.
\end{equation} 
Taking the cohomology of \eqref{eq:destab} gives a long exact sequence of sheaves
\begin{equation}\label{eq_des coho}
     0 \to \coho ^{-1}(F_2) \to F_1 \xrightarrow{d_0} i_* F \xrightarrow{d_1} \coho ^0(F_2) \to 0.
\end{equation}
Set $v(F_1)=(r',c',s')$, then we have $r'>0$ by \eqref{eq:wall}. Let $T$ be the maximal torsion subsheaf of $F_1$ and we can write $v(T)=(0, \hat c, \hat s)$ for some $\hat c, \hat s\in \ZZ$. Consider the inclusions $T \hookrightarrow F_1 \hookrightarrow i_*F$  and take the cohomology, one can get
\begin{equation*}
    0\to \coho^{-1}(\cok) \to T \to i_* F \to \coho^0(\cok)\to 0.
\end{equation*}
Since $\coho^{-1}(\cok)$ is torsion-free, it must be zero. It follows that $T$ is a subsheaf of $i_*F$ and $\rk (i^* T)= \frac{\hat c}{m}$.  If we let $v(\rH^0(F_2))=(0,c'',s'')$, by restricting (\ref{eq_des coho}) to the curve $C$, one can get
\begin{equation*}
\begin{split}
       r'+\frac{\hat c}{m} &=\rk(F_1 / T) + \rk(i^*T )  \geq \rk(i^*F_1)\\
       &\geq \rk(i^*\ker d_1) \geq \rk(i^*F) - \rk (i^*\coho^0(F_2)) = r -\frac{c''}{m}.
\end{split}
\end{equation*} 
In other words,
\begin{equation}\label{eq:<=}
    \mu(F_1/T)-\mu(\rH^{-1}(F_2))=\frac{c'-\hat{c}}{r'} - \frac{c'+c''-m r}{r'} \leq m.
\end{equation}
Using Lemma \ref{lem_factors} below, we can take the destabilising sequence \eqref{eq:destab} satisfying 
\begin{equation}\label{eq:diff}
\mu^-_H(F_1/T)  \geq \frac{c}{r} \,\text{ and } \, \mu^+_H(H^{-1}(F_2)) \leq \frac{c-mr}{r}.
\end{equation}
This gives
\begin{equation}\label{eq:>=}
    \mu^-_H(F_1/T)-\mu^+_H(H^{-1}(F_2)) \geq m.
\end{equation}
Combining \eqref{eq:<=} and \eqref{eq:>=}, we get $mr-c''-\hat{c}= mr'$, thus
\[
\mu_H(F_1/T) = 
\frac{c'-\hat{c}}{r'} = \frac{c'-\hat{c}}{r - \frac{c''+\hat{c}}{m} } = \frac{c}{r} 
\]
and both $F_1/T$ and $ \coho^{-1}(F_2)$ are $\mu_H$-semistable.
Since $\gcd(r,c)=1$ and $i_*F$ does not contain any skyscraper sheaf, we have $\hat{c}= c'' =0$ and  $\hat{s}=0$.
This shows $T =0$ and hence $v(F_1)=(r, 1, s')$.
Note that by our assumption, we have $\pi_{v(F_1)}\in \rL_{(o,\pi_v)}$, which means $s < s'$.
However, this gives  $v(F_1)^2=2r(s-s')< -2$ which  contradicts to the fact that $F_1$ is $\mu_H$-stable.

Assume that  $\cW_{i_\ast \cF}\subseteq \rL_{\pi_v,\pi_{v(-m)}}$. 
Then we have $$\mu^-_H(F_1/T)-\mu^+_H(H^{-1}(F_2)) = m$$ and    $F_1$ is a stable sheaf. Note that the map $d_0:F_1 \rightarrow i_* F$  factors through $d_0'\colon i_* (F_1|_C) \to i_*F$ and $\mu_H(i_* (F_1|_C))=\mu_H(i_\ast F)$. 
Applying Theorem~\ref{thm_restriction map} to $F_1$, we know that  $i_*(F_1|_C)$ is stable as well.
It follows that $d_0'$ is an isomorphism.
\end{proof}

\begin{lemma}\label{lem_factors}
With  notations and assumptions as above, 
one can find a destabilising sequence \eqref{eq:destab} such that $F_i$ satisfies
\begin{equation}\label{eq:2-inequa}
    \mu_H^-(F_1/T)  \geq \frac{c}{r} \quad \text{ and } \quad \mu_H^+(\coho^{-1}(F_2)) \leq \frac{c-mr}{r}.
\end{equation}
\end{lemma}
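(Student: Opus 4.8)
The statement we must establish is Lemma~\ref{lem_factors}: starting from an arbitrary destabilising sequence $F_1 \hookrightarrow i_*F \twoheadrightarrow F_2$ along the first wall $\cW_{i_*F}$, we can modify it so that the quotient $F_1/T$ has all Harder--Narasimhan slopes $\geq \frac{c}{r}$ and the sheaf $\coho^{-1}(F_2)$ has all HN slopes $\leq \frac{c-mr}{r}$. The idea is the standard one: replace $F_1$ by a suitable subobject (and $F_2$ by the corresponding quotient) obtained from the $\mu_H$-Harder--Narasimhan filtrations of $F_1/T$ and $\coho^{-1}(F_2)$, and check that this does not push the object off the wall. Concretely, first I would recall that being on the wall $\cW_{i_*F}$ only constrains the \emph{phase} of $F_1$ at the wall stability condition $\sigma_{\alpha',0}$; and that $\coho^{-1}(F_2)$ sits inside $F_1$ (shifted) via \eqref{eq_des coho}, with $F_1/T$ a subsheaf of $i_*F$, while $\coho^0(F_2)$ is a torsion quotient of $i_*F$.

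\textbf{Key steps.} First I would analyse the slopes of $F_1/T$. If $\mu_H^-(F_1/T) < \frac{c}{r} = \mu_H(i_*F)$, let $N \subseteq F_1/T$ be the last HN factor (or the maximal destabilising subsheaf with slope $< c/r$); then $N$ maps to $i_*F$ with $\mu_H(N) < \mu_H(i_*F)$, so after restricting to $C$ (or arguing with the torsion-free quotient of $i^*F$) one sees that removing $N$ from $F_1$ produces a strictly smaller subobject $F_1' \subset F_1$ in $\Coh^{\beta=0}(X)$ whose phase at $\sigma_{\alpha',0}$ is still $\geq \phi_{\alpha',0}(i_*F)$, because discarding a piece of slope below $c/r$ can only increase the $\sigma_{\alpha',0}$-slope (here $\beta=0$ and $\alpha'$ is the wall value, so $F_1'$ remains in $\Coh^{\beta=0}(X)$ as long as its HN slopes stay positive, which they do after discarding the smallest factor). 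Iterating, we arrive at a destabilising $F_1$ with $\mu_H^-(F_1/T) \geq \frac{c}{r}$. Symmetrically, for $\coho^{-1}(F_2)$: if $\mu_H^+(\coho^{-1}(F_2)) > \frac{c-mr}{r}$, the maximal destabilising subsheaf $M \subseteq \coho^{-1}(F_2)$ has slope $> \frac{c-mr}{r}$; dualising and twisting, $M(C)$ relates to a quotient of $i_*F$ and one checks that enlarging $F_1$ to include $M[1]$ (equivalently shrinking $\coho^{-1}(F_2)$) keeps the phase on the right side of the wall. The point in both cases is that $i_*F$ has a single HN slope $\frac{c}{r}$ (it is a pure sheaf of rank $mr$ on a curve in $|mH|$, hence $\mu_H$-semistable of slope $c/r$), so any subsheaf of slope $< c/r$ or quotient of slope $> c/r$ can be freely traded off without crossing the wall.

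\textbf{Main obstacle.} The delicate point is making sure that after each modification the new $F_1'$ (resp.\ new $F_2'$) still lies in the heart $\Coh^{\beta=0}(X)$ and is genuinely \emph{destabilising} at $\sigma_{\alpha',0}$, i.e.\ that the phase inequality \eqref{eq:wall} is preserved rather than merely the non-strict one — otherwise we might land exactly on a sub-wall or lose the wall entirely. I would handle this by working with the see-saw property of $\phi_{\alpha',0}$ along short exact sequences in $\Coh^{\beta=0}(X)$ and the observation that the factors being removed/added have $\sigma_{\alpha',0}$-phase strictly on the appropriate side of $\phi_{\alpha',0}(i_*F)$: a $\mu_H$-semistable sheaf of slope $<c/r$ (resp.\ $>c/r$), viewed in $\Coh^{0}(X)$, has $\sigma_{\alpha',0}$-phase strictly less (resp.\ a shift with phase strictly greater) than that of $i_*F$ at the wall, since on the wall the only coincidence of phase allowed is with objects whose projection lies on $\rL_{\pi_v,\pi_{v(-m)}}$. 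Combined with the fact that the process terminates (the rank of $F_1/T$ and of $\coho^{-1}(F_2)$ are bounded and strictly drop at each step), this yields the desired destabilising sequence satisfying \eqref{eq:2-inequa}. A secondary bookkeeping issue is the torsion subsheaf $T$: one must check that the modifications of $F_1$ can be arranged to not interfere with $T$ — but since $T \hookrightarrow i_*F$ and $i_*F$ has no subsheaf supported in dimension $0$, $T$ is a sheaf on (a subscheme of) $C$ and its slope contributions are controlled, so the HN-surgery on $F_1/T$ is unobstructed.
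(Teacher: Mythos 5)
Your proposal does not follow the paper's route and, as written, has a genuine gap. The paper performs no HN-surgery on the destabilising sequence: it chooses the sequence once, via Remark~\ref{rem_triangle rule+}, so that $v(F_1)$ lies in the strictly destabilising region $\Omega^+_{v|_C}(\rL_{(o,\sigma_v]})$, and then proves that the slope bounds \eqref{eq:2-inequa} hold \emph{automatically}: any $\mu_H$-HN factor $G$ of $F_1/T$ with $\mu_H(G)<\tfrac{c}{r}$ would produce an integer Mukai vector inside $\Omega^+_{v}(\rL_{(o,\sigma_v]})$ (this is the content of Steps 1--3 of the paper's proof), contradicting the standing hypothesis that $v$ admits no wall in $\rL_{(o,\sigma_v]}$; symmetrically for $\coho^{-1}(F_2)$ and $v(-m)$. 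Your argument never invokes these no-wall hypotheses, and they are essential: the conclusion is not a formal consequence of $i_*F$ being the pushforward of a stable bundle on $C$.

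The specific step that fails is the monotonicity claim that ``discarding a piece of slope below $c/r$ can only increase the $\sigma_{\alpha',0}$-slope.'' By Proposition~\ref{prop_phasereading}, the phase at a fixed $\sab\in V(X)$ is read off from the direction of $\overrightarrow{\sigma\pi_E}$, i.e.\ it depends on the full Mukai vector $(r',c',s')$ and not only on $\mu_H=c'/r'$; a $\mu_H$-destabilising quotient $N$ of $F_1/T$ with $\mu_H(N)<c/r$ can perfectly well satisfy $\phi_{\alpha',0}(N)\ge\phi_{\alpha',0}(i_*F)$, in which case deleting it \emph{lowers} the phase of $F_1$ and the modified sequence no longer destabilises. (The ordering by phase agrees with the ordering by $\mu_H$ only in the large volume limit $\sigma\to o$, not at the wall itself.) Relatedly, your surgery does not guarantee that the new $F_1'$ is still $\sigma_{\alpha',0}$-semistable of the same phase as $i_*F$, which is part of what ``destabilising sequence'' means here; and the assertion that $i_*F$ is ``$\mu_H$-semistable of slope $c/r$'' conflates the slope of $E\in\bM(v)$ with that of the torsion sheaf $i_*F$ on $X$. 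To repair the argument you would have to show that a bad HN factor cannot exist at all, and that is precisely what the destabilising-region computation in the paper's Steps 1--3 accomplishes.
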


\begin{proof}
Denote $\sigma_1 = \cW_{i_*F} \cap \rL_{(o,\sigma_v]}$. 
By Remark~\ref{rem_triangle rule+}, we can take the destabilising sequence 
\begin{equation*}
   F_1 \hookrightarrow i_*F \twoheadrightarrow F_2 
\end{equation*}
satisfying  $v(F_i) \in \Omega^+_{v|_C}(\sigma_1) \subseteq \Omega^+_{v|_C} (\rL_{( o, \sigma_v]})$. We divide the proof into three steps. 
 
 \subsubsection*{Step 1} We show that for any point $u=(x_0,y_0,z_0)$ with $u^2 \geq -2$ and $x_0>0$, $u$ is lying in $\Omega_{v}^+(\rL_{(o,\sigma_v]})$ if $x_0\leq r$ or $z_0\leq s$, and $\pi_u\in \bP^\circ_{o\sigma_v\pi_v}$. By its definition,  we know that $u\in \Omega_v^+(\rL_{(o,\sigma_v]})$  if 
 \begin{equation}
    u\in \bP^\circ_{O v v_{\sigma}^+} \quad \text{and}\quad    (u-v)^2 \geq -2 .
 \end{equation}
for some $\sigma \in \rL_{(o,\sigma_v]}$.

As $\pi_u=(\frac{x_0}{z_0}, \frac{y_0}{z_0})$ is lying in the interior of the triangle $\bP_{o\sigma_v\pi_v}$, we have 
\begin{equation}\label{eq:slope-in}
    \frac{y_0}{z_0}< \frac{c}{s}\quad \text{and} \quad \frac{x_0/z_0}{y_0/z_0}=\frac{x_0}{y_0}>\frac{r}{c}.
\end{equation}
The line $\rL_{\pi_u, \pi_v}$ will meet the open edge $\rL_{(o,\sigma_v)}$.  Denote by $\sigma$ the intersection point $\rL_{(o,\sigma_v]}\cap \rL^+_{\pi_v,\pi_u}$.  From the construction, we know that  $u$ is coplanar to  $v$, $v_\sigma^+$ and $O$. Indeed,  it is lying in the  planar cone  bounded by the two rays $\rL^+_{O,v}$ and $\rL^+_{O, v^+_{\sigma}}$. The condition $ x_0 \leq r$ or $z_0 \leq s$ will ensure that  $u\in \bP^\circ_{Ov v_{\sigma}^+}$.  

Moreover, when $x_0\leq r, z_0\geq s$ or $x_0\geq r, z_0\leq s$, we have $(u-v)^2\geq (g-1)(y_0-c)^2 >0$. When $x_0\leq r$ and $z_0\leq s$, then we have  $$(u-v)^2\geq \frac{(c-y_0)^2}{c^2}v^2>0,$$
 by \eqref{eq:slope-in}.

\subsubsection*{Step 2}  Set $v(F_1)=(r',c', s')$ and $v(F_2)=(-r', mr-c', s-\tilde s-s')$ with $0<c'<m r$ and $ r'>0$. 
We claim that 
\begin{equation}\label{eq:slope-ineq}
    \mu_H(F_1)\geq \frac{c}{r}\quad \text{and} \quad \mu_H(F_2) \leq \frac{c-mr}{r}.
\end{equation}
Firstly, we must have either $r'\leq r$ or $s'\leq s$. Otherwise, one will have
     $$v(F_1)^2< (g-1)c^2-r(s+1)\leq -2$$
or 
      $$(v(F_1)-v|_C)^2< (g-1)(mr-c)^2-r(\tilde{s}+1)\leq -2.$$
Both of them are impossible as $v(F_1)\in \Omega_{v|_C}^+(\rL_{(o,\sigma_v]})$. 

Now, suppose $\mu_H(F_1) < \frac{c}{r}$.  
Then we have $v(F_1)\in \bP^\circ_{o \sigma_v \pi_v} $ as $\phi_{\sigma_v}(F_1) \geq \phi_{\sigma_v}(v)$.
According to Step 1,  we get  $$v(F_1)\in \Omega_v^+(\rL_{(o,\sigma_v]})$$ which contradicts to the assumption  $\Omega_{v}^+ (\rL_{(o, \sigma_v]})\cap \Halg(X)=\emptyset$.      Similarly,  we have  $\mu_H(F_2) \leq \frac{c-mr}{r} $ as  there is no integer point in $\Omega_{v(-m)}^+(\rL_{(o,\sigma_v]})$.  
This proves the claim.  As a consequence, we get  $$\frac{mr'}{r}=\mu_H(F_1)-\mu_H(F_2)\geq \frac{c}{r}-\frac{c-mr}{r}=m$$ which implies $r'\leq r$.

\subsubsection*{Step 3} Let $(F_1)_{\min}$ be the last $\mu_H$-HN factor of $F_1$, hence also of $F_1/T$. 
According to  \cite[Proposition~14.2]{Bridgeland2008}, for $\sigma$ sufficiently close to $o$, we always have 
\begin{itemize}
    \item $(F_1)_{\min}$ is $\sigma$-semistable,
    \item  $v(G)$ is proportional to $v((F_1)_{\min})$ for any $\sigma$-stable factor $G$ of $(F_1)_{\min}$.
\end{itemize}

  As $(F_1)_{\min} $ is a quotient sheaf of $F_1$, it is also a quotient of $F_1$ in $\Coh^{\beta=0}(X)$. 
Since $F_1$ is $\sigma_1$-semistable, we have $$\phi_{\sigma_1}(F_1) \leq \phi_{\sigma_1}((F_1)_{\min}).$$
Combined with the fact $\mu_H (F_1) \geq \mu_H((F_1)_{\min})$, we have $\pi_{G} = \pi_{(F_1)_{\min} } \in \bP_{o\sigma_1 \pi_{F_1}}$.
As the triangle $\bP_{o\sigma_1 \pi_{F_1} }$ is lying below the ray $\rL^+_{\sigma_v,\pi_v}$, we get $\pi_G \in \bP_{o\sigma_v \pi_v}^\circ$ if $\mu_H(G) < \frac{c}{r}$. 
Note that $\rk(G)\leq \rk(F_1)=r$. We must have  $\mu_H(G)\geq \frac{c}{r}$ otherwise one will get $\pi_G\in \Omega_v^+(\rL_{(o,\sigma_v]})$ by the same argument in Step 2. 
It follows that 
$$ \mu_H^-(F_1/T) =\mu_H((F_1)_{\min}) = \mu_H(G) \geq \frac{c}{r}.$$
A similar argument shows $\mu_H^+(\rH^{-1}(F_2)) \leq \frac{c-mr}{r}$. This finishes the proof. 
\end{proof}

\subsection*{HN-polygon} 
 Let $\sigma_{\alpha,0}$ be a stability condition with $\alpha$ close to $\sqrt{2/H^2}$.  
 By \cite[Proposition~3.4]{Feyzbakhsh2020}, for fixed $E$,  the HN filtration of $\sigma_{\alpha,0}$ will stay the same for $\sqrt{2/H^2}+\epsilon >\alpha >\sqrt{2/H^2}$.
Denote by $\overline\sigma$ the limit of $\sigma_{\alpha, 0}$. The `stability function' can be written as \[
\overline{Z}(E) = r-s + c\sqrt{-1} .
\]  
if $v(E)=(r,c,s)$.  Let $\bP_{i_*F}$ be the HN polygon\footnote{Here our definition of HN polygon is slightly different from \cite[Definition 3.3]{Feyzbakhsh2020}. We drop off the part on the right hand side of the line segment $\rL_{[0,\overline{Z}(i_*F)]}$.} for $i_*F$ with respect to $\overline \sigma$. For $E\in \bM(v)$, we have $\bP_{i_*(E|_C)}=\bP_{0z_1z_2}$, where
\[
z_1=r-s+c\sqrt{-1}  \quad \text{and} \quad z_2=m(g-1)(mr-2c)+mr\sqrt{-1}.
\]
As the polygon $\bP_{i_\ast(E|_C)}$ only depends  on $v$,  we may simply write it as $\bP_v$.

\begin{theorem} \label{thm_polygons}
For any $F\in \BN_C(v)$,  we have $\bP_{i_*F} \subseteq \bP_v$.  Moreover, they coincide if and only if $F=E|_C$ for some $E\in \bM(v)$.
\end{theorem}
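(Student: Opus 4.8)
The plan is to reduce the inclusion $\bP_{i_*F}\subseteq\bP_v$ to a pair of slope bounds on the extremal Harder--Narasimhan factors of $i_*F$ at $\overline\sigma$, and then to extract these from Theorem~\ref{thm_1st wall} together with Lemma~\ref{lem_factors}. Since $v(i_*F)=v|_C=v-v(-m)$ does not depend on $F$, we have $\overline{Z}(i_*F)=z_2$ for every $F\in\BN_C(v)$; thus $\bP_{i_*F}$ and $\bP_v=\bP_{0z_1z_2}$ are both the region bounded by the common chord $\rL_{[0,z_2]}$ and by a convex polygonal arc joining $0$ to $z_2$ --- for $\bP_{i_*F}$ the arc through the partial sums $\overline{Z}(\widetilde E_j)$ of the $\overline\sigma$-HN filtration $0=\widetilde E_0\subset\dots\subset\widetilde E_n=i_*F$ (convexity being the strict decrease of the $\overline\sigma$-slopes of the factors $A_j=\widetilde E_j/\widetilde E_{j-1}$), and for $\bP_v$ the two-segment arc $0\to z_1\to z_2$, which is the $\overline\sigma$-HN arc of $i_*(E|_C)$ with factors $E$ and $E(-C)[1]$. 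Both arcs lie on the same side of the common chord (an HN arc always lies on the side of smaller real part). Since $\bP_{i_*F}$ is the convex hull of $\{0,\overline{Z}(\widetilde E_1),\dots,\overline{Z}(\widetilde E_{n-1}),z_2\}$ and $\bP_v$ is convex, it suffices to show that each vertex $\overline{Z}(\widetilde E_j)$ lies in $\bP_v$.

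Next I would write $\bP_v$ as the intersection of the three half-planes bounded by its edge-lines $\rL_{0,z_2}$, $\rL_{0,z_1}$ and $\rL_{z_1,z_2}$, and verify membership of $\overline{Z}(\widetilde E_j)$ in each. The condition coming from the chord $\rL_{0,z_2}$ is automatic, because $\bP_{i_*F}$ lies entirely on one side of it --- the same side as $\bP_v$ --- by the previous remark. By convexity of the HN arc the secant slopes $\mu_{\overline\sigma}(\widetilde E_j)\defeq-\re\overline{Z}(\widetilde E_j)/\im\overline{Z}(\widetilde E_j)$ decrease with $j$, and --- applying the involution $G\mapsto i_*F/G$, which fixes the chord and interchanges the lines $\rL_{0,z_1}$ and $\rL_{z_1,z_2}$ --- so do the slopes $\mu_{\overline\sigma}(i_*F/\widetilde E_j)$. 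Hence the remaining two half-plane conditions reduce to the single extremal bounds
\begin{equation}\label{eq:pln-AB}
\mu_{\overline\sigma}^+(i_*F)\le\mu_{\overline\sigma}(E)=\tfrac{s-r}{c}\qquad\text{and}\qquad\mu_{\overline\sigma}^-(i_*F)\ge\mu_{\overline\sigma}(E(-C)[1]),
\end{equation}
whose right-hand sides are the slopes of the two edges of the model arc of $\bP_v$.

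It remains to prove \eqref{eq:pln-AB}. Here I would pass from $\overline\sigma$ to the $H$-slope: the first and the last $\overline\sigma$-HN factor of $i_*F$ are, up to Mukai vectors proportional to $\mu_H$-semistable ones, $\mu_H$-semistable (as in the use of \cite[Proposition~14.2]{Bridgeland2008} in the proof of Lemma~\ref{lem_factors}), so \eqref{eq:pln-AB} becomes the statement that the destabilising sequence $F_1\hookrightarrow i_*F\twoheadrightarrow F_2$ on the wall bounding the Gieseker chamber of $i_*F$ satisfies $\mu_H^-(F_1/T)\ge\frac cr=\mu_H(v)$ and $\mu_H^+(\coho^{-1}(F_2))\le\frac{c-mr}{r}=\mu_H(v(-m))$. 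These are exactly the inequalities \eqref{eq:2-inequa} of Lemma~\ref{lem_factors}, whose hypotheses --- that $v$ and $v(-m)$ admit no wall in $\rL_{(o,\sigma_v]}$ --- hold under our standing assumptions \eqref{eq:inj-cond}--\eqref{eq:inj-cond2} by Lemma~\ref{lem_no int}; equivalently, \eqref{eq:pln-AB} is Theorem~\ref{thm_1st wall}'s conclusion that this wall is not below $\rL_{\pi_v,\pi_{v(-m)}}$, read through the identification of the points $\pi_{\widetilde E_1}$ and $\pi_{i_*F/\widetilde E_{n-1}}$ with the endpoints of the wall. This proves $\bP_{i_*F}\subseteq\bP_v$.

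For the equality clause: if $F=E|_C$ then $\bP_{i_*F}=\bP_{i_*(E|_C)}=\bP_v$ by the definition of $\bP_v$; conversely, $\bP_{i_*F}=\bP_v$ forces the convex $\overline\sigma$-HN arc of $i_*F$ to be the two-segment arc $0\to z_1\to z_2$, hence forces equality in both inequalities of \eqref{eq:pln-AB} and thus the wall of $i_*F$ to meet $\rL_{\pi_v,\pi_{v(-m)}}$, so the equality case of Theorem~\ref{thm_1st wall} gives $F=E|_C$. I expect the main obstacle to be the translation carried out in the previous paragraph: turning the wall-position statement of Theorem~\ref{thm_1st wall} into the two \emph{separate} $\overline\sigma$-slope bounds \eqref{eq:pln-AB} requires matching the $\overline\sigma$-HN data of $i_*F$ at the limit point $o'$ with the destabilising sequence appearing on the wall (an interior point of $\rL_{(o,o')}$), and carrying along the bookkeeping of the torsion subsheaf $T\subset F_1$ and of $\coho^{-1}$ of the cokernel exactly as in the proof of Lemma~\ref{lem_factors}. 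Everything else is the convex geometry of two polygons with a common chord, together with the already-established Theorem~\ref{thm_1st wall} and Lemma~\ref{lem_factors}.
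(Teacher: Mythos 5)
Your reduction of $\bP_{i_*F}\subseteq\bP_v$ to the two extremal bounds $\mu_{\barsigma}(E_1)\le\mu_{\barsigma}(v)=\frac{s-r}{c}$ and $\mu_{\barsigma}(E_l)\ge\mu_{\barsigma}(v(-m))$ on the first and last $\barsigma$-HN factors is exactly the paper's reduction (its inequality \eqref{eq_angle}), and your treatment of the equality clause matches the paper's. The gap is in the step where you prove these two bounds. You assert that they ``become'' the inequalities \eqref{eq:2-inequa} of Lemma~\ref{lem_factors}, i.e.\ $\mu_H^-(F_1/T)\ge c/r$ and $\mu_H^+(\coho^{-1}(F_2))\le (c-mr)/r$. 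But those are bounds on the $H$-slope $c'/r'$ of the destabilising factors, whereas the quantity you must control is the $\barsigma$-slope $(s'-r')/c'$ of the first HN factor $v(E_1)=(r',c',s')$. Knowing $c'/r'=c/r$ (or even the exact value of $(r',c')$) says nothing about $s'$, and it is precisely the third coordinate $s'$ that can push $\pi_{E_1}$ into the forbidden triangle $\bP_{o'\pi_v\sigma_v}\setminus\{\pi_v\}$. Likewise, Theorem~\ref{thm_1st wall} only yields $\phi_{\sigma_v}(\widetilde E_1)\le\phi_{\sigma_v}(i_*F)$, a constraint at $\sigma_v$; the bound you need is a constraint at $o'$ (the large-volume limit $\barsigma$), and the triangle $\bP_{o'\pi_v\sigma_v}$ consists exactly of projections that are compatible with the $\sigma_v$-constraint yet violate the $o'$-constraint. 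So neither of your two cited inputs closes the argument, and the ``identification of $\pi_{\widetilde E_1}$ with the endpoints of the wall'' that you invoke does not hold.

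The paper fills this gap with a genuinely additional step: after recording (its \eqref{eq_filtration pr}) that $\pi_{\widetilde E_i}$ lies on a line through $\pi_{i_*F}$ meeting $\rL_{[\sigma_v,o')}$, it excludes integer points from $\bP_{o'\pi_v\sigma_v}\setminus\{\pi_v\}$ by a three-case analysis. For $v^2=0$ this is automatic, since $\bP_{o'\pi_v\sigma_v}$ minus its vertices lies in $V(X)$ and \eqref{eq_def} applies; for $v^2>0$ with $r'\le r$ or $s'\le s$ it uses the three-dimensional destabilising region $\Omega_v^+(\rL_{(o,o')})$ from Step~1 of Lemma~\ref{lem_factors}, which constrains all three coordinates of the Mukai vector (unlike the slope inequalities \eqref{eq:2-inequa}); and the remaining case $r'>r$, $s'>s$ requires a separate computation showing $r'<r+1$ via \eqref{eq:inj-cond2}. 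You would need to supply an argument of this kind --- one that controls $s'$ and not merely $c'/r'$ --- before the inclusion $\bP_{i_*F}\subseteq\bP_v$ is established.
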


\begin{proof} 
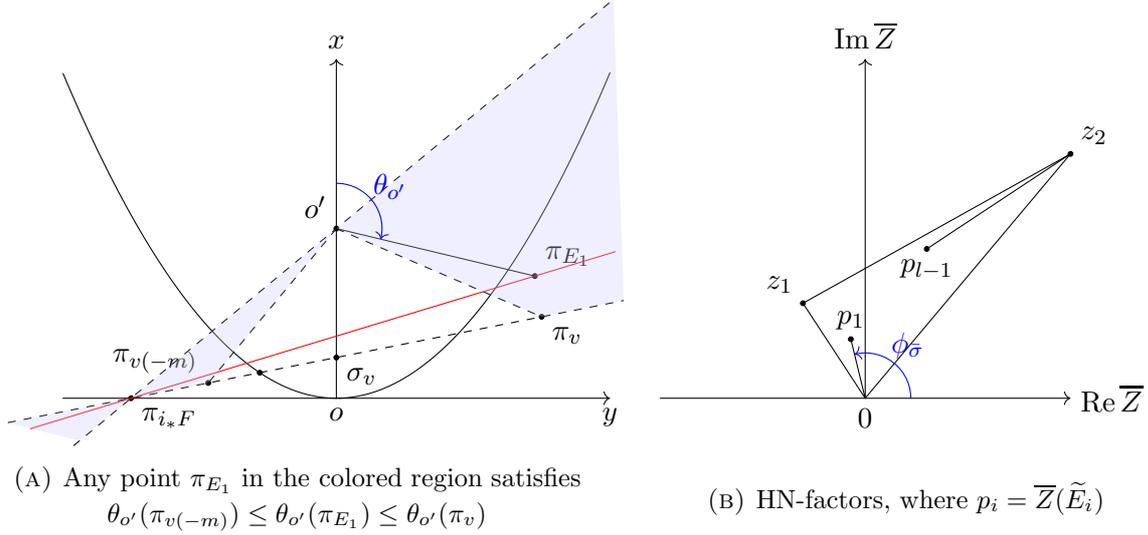
\begin{figure}
\centering
\begin{subfigure}{0.45\textwidth}
    \centering
\begin{tikzpicture}[domain=-4:4,samples = 1000,scale=0.9]
    \coordinate (O) at (0,0);
	\coordinate (o') at (0,2.5);
	\draw (O) node [below] {$o$};
	\filldraw [black] (o') circle [radius=1pt] node [above left] {$o'$};
	\draw [->] (-4,0) -- (4,0) node [below]{$y$};
	\draw[->] (O) -- (0,5) node [above]{$x$};
	\path [name path = x-axis] (0,0) -- (0,5);
	\path [name path = parabola] plot (\x,{0.3*(\x)^2}) ;
	\draw plot (-\x,{0.3*(\x)^2});
	\coordinate (E) at (3,1.2);
	\coordinate (F) at (-3,0);
    \filldraw [black] (E) circle [radius=1pt] node [below right] {$\pi_v$};
	\filldraw [black] (F) circle [radius=1pt] node [below right] {$\pi_{i_*F}$};
	\path [name path = link, add = 0.3 and 0.2, draw, dashed] (F) to (E);
	\path[draw, dashed] (o') to (E);
	\coordinate (E_1) at (2.9,1.8);
	\filldraw [black] (E_1) circle [radius=1pt] node [above right] {$\pi_{E_1}$};
	\path[add = 0.12 and 0, draw, dashed] (F) to (4,2.5/3*7);
	\path[add = 0.25 and 0.2, draw, red] (F) to (E_1);
    \draw[] (o') -- (E_1);
    \fill [fill = blue!20!white,opacity=0.3]  ($(o')!-1.35!(F)$) --(o')--(E) -- ($(E)!-0.2!(F)$) --cycle;
    \fill [fill = blue!20!white,opacity=0.3] (F) -- ($ (F)!-.3!(E) $)-- ($ (F)!-.25!(o') $)--cycle;
	\filldraw  [name intersections={of= link and parabola, by=E''},black] (E'') circle [radius=1pt] node [] {};
	\coordinate (E') at ($ (F)!.6!(E'') $);
	\filldraw [black] (E') circle [radius=1pt] node [above left] {$\pi_{v(-m)}$};
	\path[name path = o'E', draw,dashed] (o') to (E');
	\fill [fill = blue!20!white,opacity=0.3] (F) -- (E') -- (o') -- cycle;
	\filldraw  [name intersections={of= link and x-axis, by=sigma},black] (sigma) circle [radius=1pt] node [below right] {$\sigma_v$};
	\coordinate (x) at (0,5);
	\pic["$\theta_{o'}$", draw, blue, <-, angle eccentricity=1.5, angle radius=0.6cm]
    {angle=E_1--o'--x};
\end{tikzpicture}
\caption{Any point $\pi_{E_1}$ in the colored region satisfies $\theta_{o'}(\pi_{v(-m)}) \leq \theta_{o'}(\pi_{E_1}) \leq \theta_{o'}(\pi_v)$}\label{fig:angle_1}
\end{subfigure}\quad
\begin{subfigure}{0.45\textwidth}
\centering
\begin{tikzpicture}[domain=-3:3,samples = 1000,scale=0.9]
    \coordinate (O) at (0,0);
	\draw (O) node [below] {$0$};
	\draw [->] (-3,0) -- (3,0) node [right]{$\re \overline{Z}$};
	\draw[->] (0,0) -- (0,5) node [above]{$\im \overline{Z}$};
	\path [name path = y-axis] (0,-1) -- (0,5);
	\coordinate (E) at (0.7*-1.3,0.7*2);
    \filldraw [black] (E) circle [radius=1pt] node [above left] {$z_1$};
    \coordinate (F) at (1.2*2.5,1.2*3);
    \filldraw [black] (F) circle [radius=1pt] node [above right] {$z_2$};
    \draw[] (O) -- (E) -- (F)--cycle;
    \coordinate (p1) at (0.3*-0.7,0.3*2.9);
    \coordinate (p2) at (0.9, 2.2);
    \filldraw [black] (p1) circle [radius=1pt] node [above] {$p_1$};
    \filldraw [black] (p2) circle [radius=1pt] node [below] {$p_{l-1}$};
    \draw[] (O)-- (p1) (p2) -- (F);
	\coordinate (Re) at (1,0);
	\pic["$\phi_{\barsigma}$", draw, blue, ->, angle eccentricity=1.5, angle radius=0.6cm]
    {angle=Re--O--p1};
\end{tikzpicture}
\caption{HN-factors, where $p_i=\overline{Z}(\widetilde{E}_i)$}\label{fig:angle_2}
\end{subfigure}
\caption{The angle $\theta_{o'}$ is equal to  the angle $\phi_{\barsigma}$}
\end{figure}
When $v^2=0$, this is  essentially proved in \cite[Lemma 4.3]{Feyzbakhsh2020}. Let us give a slightly different argument which also works for $v^2>0$. Suppose the HN-filtration of $i_*F$ for $\bar{\sigma}=(\Coh^{\beta=0}(X), \overline{Z})$ is given by
\begin{equation}\label{eq:HN}
 0 = \widetilde{E}_0 \subset \widetilde{E}_1 \subset \dots \subset  \widetilde{E}_{l-1} \subset \widetilde{E}_l = i_* F
\end{equation}
with $E_i \defeq \widetilde{E}_i / \widetilde{E}_{i-1} $ the semistable HN-factors.  To show $\bP_{i_\ast F}\subseteq \bP_v$,  it suffices to show that \begin{equation} \label{eq_angle}
\phi_{\barsigma}(v)\geq \phi_{\barsigma}({E_1}) \quad \text{and} \quad \phi_{\barsigma}({E_l}) \geq  \phi_{\barsigma}({v(-m)})
\end{equation} 
(see \autoref{fig:angle_2}), since $ \bP_{i_*F}$ is convex.
According to the proof of Proposition~\ref{prop_phasereading}, for any object in $\Coh^0(X) $, the angle $\phi_{\barsigma}$ in \autoref{fig:angle_2} is an increasing function
\footnote{They are actually equal in this  case, as
$\cot \theta_{o'} = \frac{y-1}{x}=\frac{r-s}{c}=\frac{\re \overline Z}{\im \overline Z} = \cot \phi_{\barsigma}.$}
of the angle $\theta_{o'}$ in \autoref{fig:angle_1}.
Therefore, it is equivalent to show
\begin{equation} \label{eq_angle'}
\theta_{o'}(\pi_v)\geq \theta_{o'}(\pi_{E_1}) \quad \text{and} \quad \theta_{o'}(\pi_{E_l}) \geq  \theta_{o'}(\pi_{v(-m)})
\end{equation} 
in \autoref{fig:angle_1}.

To prove \eqref{eq_angle'}, consider the sequence
\begin{equation}\label{eq:ses}
    0 \to \widetilde{E}_n \xrightarrow{f_n} i_* F \to \cok(f_n) \to 0
\end{equation}
for each $\widetilde{E}_n$. 
Since the first wall is not below $\rL_{\pi_{i_* F}, \pi_v}$, we have $\phi_{\sigma_v}(\widetilde{E}_n)\leq  \phi_{\sigma_v}(i_*F)$. 
As $\phi_{\bar\sigma}(\widetilde{E}_n) \geq \phi_{\bar\sigma}(i_*F)$,
there exists some stability condition $\sigma \in \rL_{(o', \sigma_v]}$ such that the  objects in \eqref{eq:ses} have the same $\sigma$-phase. As a consequence, we have
\begin{equation}\label{eq_filtration pr}
\pi_{\widetilde E_i} \in \bigcup_{\sigma\in\rL_{[\sigma_v, o')}} \rL_{\pi_{i_*F, \sigma}}.    
\end{equation}
Take $n=1$ and set $v(E_1) = (r',c',s')$. 
We claim that $\pi_{E_1} \notin \bP_{o' \pi_v \sigma_v} \setminus \bigbrace{\pi_v}$ which yields $ \theta_{o'}(\pi_{E_1}) \leq \theta_{o'}(\pi_v)$. 
This can be proved by cases as follows: 
\begin{itemize}
    \item[Case (1)] If $v^2 =0$,  $\pi_{E_1} \notin  \bP_{o' \pi_v \sigma_v}\setminus\{ \pi_v\}$ automatically holds. 
    This is because  $\bP_{o' \pi_v \sigma_v}\setminus\{ \pi_v,o'\} \subseteq V(X)$ and \eqref{eq_def}. 
    
    \item[Case (2)] If $v^2>0$ and  $r' \leq r$ or $s' \leq s$, as $E_1$  is  $\bar{\sigma}$-semistable, we may assume $v(E_1)^2 \geq -2$ otherwise we may replace $E_1$ by its first JH-factor. According to the Step 1 in Lemma \ref{lem_factors},  we have $$v(E_1) \in \Omega_v^+(\rL_{(o,\,o')}). $$  
which contradicts to the assumption $\Omega_v^+(\rL_{(o,\,o')})\cap \Halg(X)=\emptyset$. 


    \item[Case (3)] If $r' > r$ and $s'>s$,  we claim that  $r' < r+1$. 
    Choose a stability condition $\sigma \in \rL_{[\sigma_v, o')}$ such that $\phi_{\sigma}(i_*F) = \phi_{\sigma}(E_1)$.
    Then $v(E_1)\notin\{ O, v|_c\}$ is lying in the triangle $\bP_{Ov|_c (v|_c)^+_{\sigma}}$. This means  we  have $0< c' < mr$ and
    \[
    g(c')^2 - r's' \geq 0,   \quad g(c'-mr)^2 -r'(s'+(g-1)m(mr-2c)) \geq 0
    \]
  After reduction, we know that  $r'<r+1$ as $g c^2 -(r+1)s \leq 0 $ and $ g (c-mr)^2 -(r+1)\tilde{s} \leq 0$ by \eqref{eq:inj-cond2}.
\end{itemize}

Similarly, take $n=l-1$ and use the $\barsigma$-semistability of $\cok(f_{l-1})=E_l$, one can prove the second inequality of \eqref{eq_angle'}.

Finally, if $\bP_{i_*F} = \bP_v$, the first wall will coincide with the line $\rL_{\pi_{v(-m)},\pi_v}$ and the last assertion follows from Theorem \ref{thm_1st wall}.
\end{proof}

\begin{remark}
The discussion above can be much more simplified if the following is true: for any $\sigma$ on a wall of $i_*F$, there exists a JH-filtration of $i_*F$ which is convex (i.e., the polygon with vertices $v(\widetilde E_i)$ is convex in the plane of $\bP_{O v v^+_\sigma}$).
\end{remark}

Now we provide a numerical criterion for verifying $\bP_v=\bP_{i_\ast F}$ via Euclidean geometry.  The key ingredient is the upper bound on the number of global sections of an object  $E\in \rD^b(X)$ established by Feyzbakhsh in   \cite{Feyzbakhsh2020,Feyzbakhsh2020+}. Recall that for any $x,y \in \ZZ$,  there is a function
\begin{equation*}
    \ell(x+\sqrt{-1} y)\defeq \sqrt{x^2 +2H^2 y^2 +4 (\gcd(x,y))^2}    
\end{equation*}
and one can define $\ell(E)\defeq \sum_i \ell(\overline Z(E_i))$ where $E_i$'s are the $\overline{\sigma}$-semistable factors of $E$. Moreover, we have a metric function  given by 
\begin{equation*}
    \|x+\sqrt{-1} y\| \defeq \sqrt{x^2 + (2H^2+4) y^2}   
\end{equation*}
and we set  $\|E\|\defeq \sum_i \|\overline Z(E_i)\|$. Clearly, one has $\|E\| \geq \ell(E)$  once the $y$-coordinates are non-zero. 

\begin{proposition}\cite[Proposition~3.3 and Remark~3.4]{Feyzbakhsh2020+}\label{prop_upper bound} 
Suppose $E\in \Coh^0(X)$ which has no subobject $F\subseteq E$ in $\Coh^0(X)$ with $c_1(F)=0$, we have
\begin{equation}\label{eq_fine upper bound}
    h^0(X,E)\leq \sum_i \left\lfloor \frac{\ell(E_i) + \chi(E_i)}{2} \right\rfloor = \sum_i \left\lfloor \frac{\ell(E_i)-\re \overline Z(E_i)}{2}\right\rfloor,
\end{equation}
where $E_i$'s are semistable factors with respect to $\overline \sigma$. In particular,
\begin{equation}\label{eq_coarse upper bound}
    h^0(X,E)\leq \frac{\|E\|+\chi(E)}{2}.
\end{equation}
\end{proposition}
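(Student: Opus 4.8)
The plan is to reduce to a single $\overline\sigma$-semistable object and then to a discriminant estimate. Write $h^0(X,E)=\dim\Hom(\cO_X,E)$, and let $0=\widetilde E_0\subset\cdots\subset\widetilde E_l=E$ be the Harder--Narasimhan filtration of $E$ with respect to $\overline\sigma$, with semistable factors $E_i=\widetilde E_i/\widetilde E_{i-1}$. Applying $\Hom(\cO_X,-)$ to the exact triangles $\widetilde E_{i-1}\to\widetilde E_i\to E_i\to\widetilde E_{i-1}[1]$ in $\dCat(X)$ and using subadditivity of dimension along the resulting long exact sequences, an induction on $i$ gives
\[
h^0(X,E)\le\sum_i h^0(X,E_i).
\]
The hypothesis that $E$ has no subobject $F\subseteq E$ in $\Coh^0(X)$ with $c_1(F)=0$ forces $\im\overline Z(E_i)\neq0$ for every $i$: if some factor had $\im\overline Z=0$ it would have the maximal phase, hence would be the first factor, and then $E_1=\widetilde E_1\hookrightarrow E$ would be a forbidden subobject with $c_1(E_1)=0$. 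In particular $\gcd(\re\overline Z(E_i),\im\overline Z(E_i))\le|\im\overline Z(E_i)|$, a fact used at the very end. It therefore suffices to prove, for a single $\overline\sigma$-semistable $G$ with $\im\overline Z(G)\neq0$, the estimate $h^0(X,G)\le\big\lfloor(\ell(\overline Z(G))+\chi(G))/2\big\rfloor$.

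For this single-factor bound I would first rewrite the target. Since $X$ is a K3 surface, Serre duality gives $h^2(X,G)=\dim\Hom(G,\cO_X)$ and $\chi(\cO_X,G)=\chi(G)$, whence
\[
h^0(X,G)=\chi(G)+h^1(X,G)-h^2(X,G)\le\chi(G)+h^1(X,G)=\tfrac12\big(h^0(X,G)+h^1(X,G)+h^2(X,G)+\chi(G)\big),
\]
using $h^0-h^1+h^2=\chi$. So, after taking the floor of an integer, the estimate follows from the sharper claim
\[
h^0(X,G)+h^1(X,G)+h^2(X,G)\le\ell(\overline Z(G))\qquad\text{for }\overline\sigma\text{-semistable }G\text{ with }\im\overline Z(G)\neq0.
\]
This I would prove by a case analysis on the shape of $G$ in $\Coh^0(X)$: a pure one-dimensional sheaf (where $h^2=0$ and $h^0+h^1$ is controlled by a Clifford/genus-degree bound on the supporting curves), a $\mu_H$-semistable torsion-free sheaf of strictly positive slope (where $h^2=0$ by slope semistability and $h^0$ is bounded via the rigidity of $\cO_X$, as in Mukai's original argument), and a genuine two-term complex $[\,\coho^{-1}(G)\to\coho^0(G)\,]$, which one reduces to the previous two cases via the triangle relating $G$ to its cohomology sheaves. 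The function $\ell(x+\sqrt{-1}y)=\sqrt{x^2+2H^2y^2+4\gcd(x,y)^2}$ is shaped exactly so that the Bogomolov-type inequality $v(G)^2\ge-2$ for Bridgeland-semistable objects (see \cite{Bridgeland2008}), combined with the vanishing statements above, collapses to the single bound $\sum_j h^j(X,G)\le\ell(\overline Z(G))$ uniformly across all cases --- the $4\gcd^2$ term being precisely what accounts for the spherical class $v(\cO_X)$, for which $\overline Z(\cO_X)=0$ (the degeneracy isolated by the $c_1=0$ hypothesis). Tracking equality through the filtration and through the Bogomolov inequality identifies the equality case with $E\cong i_*(E'|_C)$ for some $E'\in\bM(v)$, matching Theorem~\ref{thm_polygons}.

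Finally, the coarse bound is immediate. Dropping the floors,
\[
h^0(X,E)\le\sum_i\frac{\ell(\overline Z(E_i))+\chi(E_i)}{2}\le\sum_i\frac{\|\overline Z(E_i)\|+\chi(E_i)}{2}=\frac{\|E\|+\chi(E)}{2},
\]
where the middle inequality is $\ell(\overline Z(E_i))\le\|\overline Z(E_i)\|$, valid because $\im\overline Z(E_i)\neq0$ implies $4\gcd(\re\overline Z(E_i),\im\overline Z(E_i))^2\le4(\im\overline Z(E_i))^2$, and the last equality is the additivity $\sum_i\|\overline Z(E_i)\|=\|E\|$ and $\sum_i\chi(E_i)=\chi(E)$. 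The main obstacle is the single-factor estimate $\sum_j h^j(X,G)\le\ell(\overline Z(G))$ of the second paragraph: carrying out the case analysis and, in particular, controlling the interaction of an arbitrary $\overline\sigma$-semistable $G$ with the spherical object $\cO_X$ at the boundary stability condition $\overline\sigma$, where $\overline Z$ degenerates on $v(\cO_X)$. Everything else --- the reduction to the Harder--Narasimhan factors, the role of the $c_1=0$ hypothesis, and the passage from $\ell$ to $\|\cdot\|$ --- is routine.
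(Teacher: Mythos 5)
The first thing to note is that the paper does not prove this proposition at all: it is imported verbatim from Feyzbakhsh's work (\cite[Proposition~3.3 and Remark~3.4]{Feyzbakhsh2020+}) and used as a black box, so there is no in-paper argument to compare yours against; I can only judge the proposal as a standalone proof. The outer layers of your argument are fine: the reduction $h^0(X,E)\le\sum_i h^0(X,E_i)$ along the Harder--Narasimhan filtration, the deduction that $\im\overline Z(E_i)\neq0$ for every factor (a phase-one factor would sit at the bottom of the filtration and give a forbidden subobject with $c_1=0$), and the passage from the fine bound to the coarse bound via $\ell\le\|\cdot\|$ when the imaginary parts are nonzero. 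But the entire content of the proposition is concentrated in the single-factor estimate, which you state as $\sum_j h^j(X,G)\le\ell(\overline Z(G))$ and then explicitly defer. The ``case analysis'' into pure one-dimensional sheaves, positive-slope torsion-free sheaves and genuine two-term complexes is only gestured at, and the assertion that the Bogomolov-type inequality $v(G)^2\ge-2$ ``collapses to'' this bound uniformly, with the $4\gcd^2$ term accounting for $\cO_X$, is precisely the nontrivial claim a proof must establish --- in Feyzbakhsh's argument this is where the weak Mukai lemma applied to the evaluation map $\cO_X\otimes\Hom(\cO_X,G)\to G$ and the identity $\dim\Hom(G,G[1])=v(G)^2+2\dim\Hom(G,G)$ do the real work. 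As it stands the proposal is an outline with the load-bearing step missing, which you yourself acknowledge.

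There is also a concrete bookkeeping error in your reformulation. For $G\in\Coh^0(X)$ one has $\Hom(\cO_X,G[2])\cong\Hom(G,\cO_X)^\ast=\Hom\big(G,(\cO_X[1])[-1]\big)^\ast=0$ automatically, because $G$ and $\cO_X[1]$ both lie in the heart; on the other hand $\Hom(\cO_X,G[-1])=\Hom(\cO_X[1],G)$ need \emph{not} vanish when $\coho^{-1}(G)\neq0$. Hence the correct identity is $h^0(X,G)=\chi(G)+h^1(X,G)+h^{-1}(X,G)$ rather than $\chi(G)+h^1(X,G)-h^2(X,G)$, and the quantity to be bounded by $\ell(\overline Z(G))$ is $\sum_{j=-1}^{1}h^j(X,G)$, not $\sum_{j=0}^{2}h^j(X,G)$. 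Your version is valid only in the sheaf case; the two-term-complex case, which does occur among the $\overline\sigma$-semistable factors of $i_*F$, is exactly where the interaction with the spherical object $\cO_X$ that you flag as ``the main obstacle'' lives, so the gap and the error compound each other.
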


Following \cite{Feyzbakhsh2020}, we can give a criterion for the  surjectivity of $\psi$.
\begin{theorem}\label{thm:surj}
With the notation as in \S 5.2. 
 Let $z_1^{+1}=r-s+1+c\sqrt{-1}$, $z'_1=r-s-\frac{r-s}{c}+(c-1)\sqrt{-1}$ and $z'_2=r-s-\frac{r-\gamma^2s}{\gamma c}+(c+1)\sqrt{-1}$ where $\gamma=\frac{mr}{c}-1$.   Assume that  
\begin{enumerate}
    \item $\frac{s-r}{c}+\frac{s-r-\chi}{m r-c}\geq 2$
    \item $ \|z_1- z'_1\|-\|z'_1 -z_1^{+1}\|+ \|z_1 -z'_2\|-\|z'_2 -z_1^{+1}\|\geq  \frac{2c^2}{r+s}+\frac{2(mr-c)^2}{r+s-\chi} $
\end{enumerate}
where $\chi=\chi(i_\ast F) = m(g-1)(2c-mr)$. Then the restriction map $\psi$ will be surjective. 
\end{theorem}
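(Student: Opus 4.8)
The plan is to reduce the statement to the maximality of the Harder--Narasimhan polygon of $i_*F$ and then to exclude non-maximal polygons by means of the upper bound on global sections of Proposition~\ref{prop_upper bound}.

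\textbf{Reduction.} Under the standing assumptions \eqref{eq:inj-cond}--\eqref{eq:inj-cond2} of this section, Corollary~\ref{cor:inj num} already gives that $\psi$ is an injective morphism with stable image, while Theorem~\ref{thm_polygons} gives $\bP_{i_*F}\subseteq\bP_v$ for every $F\in\BN_C(v)$, with equality precisely on the image of $\psi$. Hence it suffices to prove $\bP_{i_*F}=\bP_v$ for all $F\in\BN_C(v)$. Suppose not, and fix such an $F$; write the $\barsigma$-HN filtration \eqref{eq:HN} of $i_*F$, with factors $E_1,\dots,E_l$ and polygon vertices $0=p_0,p_1,\dots,p_l=z_2$, where the $p_j=\overline Z(\widetilde E_j)$ are Gaussian integers and the broken line $p_0p_1\cdots p_l$ is a convex lattice path inside the triangle $\bP_v=\bP_{0z_1z_2}$ joining $0$ to $z_2$.

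\textbf{The global sections bound.} Since $F$ is slope stable on $C$, the sheaf $i_*F\in\Coh^0(X)$ is pure of one-dimensional support, so it has no subobject in $\Coh^0(X)$ with vanishing $c_1$, and every HN factor satisfies $\im\overline Z(E_i)\ge1$; in particular $\ell(\overline Z(E_i))\le\|\overline Z(E_i)\|$. Using $h^0(C,F)=h^0(X,i_*F)$ and $\sum_i\re\overline Z(E_i)=\re z_2$, Proposition~\ref{prop_upper bound} gives
\[
r+s\ \le\ h^0(C,F)\ =\ h^0(X,i_*F)\ \le\ \sum_i\Big\lfloor\frac{\ell(\overline Z(E_i))-\re\overline Z(E_i)}{2}\Big\rfloor .
\]
A direct computation, using $v^2\ge0$, shows $\ell(z_1)\ge r+s$ and $\ell(z_2-z_1)\ge r+s-\chi$, hence $\|z_1\|-\ell(z_1)\le\frac{2c^2}{r+s}$ and $\|z_2-z_1\|-\ell(z_2-z_1)\le\frac{2(mr-c)^2}{r+s-\chi}$; splitting $z_1=\overline Z(v)$ and $z_2-z_1=-\overline Z(v(-m))$ into the two edges of $\bP_v$ and taking floors shows the right-hand side above equals exactly $r+s$ when $\bP_{i_*F}=\bP_v$. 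Thus the whole point is to prove that whenever $\bP_{i_*F}\subsetneq\bP_v$ the right-hand side drops to at most $r+s-1$, contradicting the left-hand inequality.

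\textbf{Corner-cutting at $z_1$.} Because $\bP_{i_*F}\neq\bP_v$ the convex lattice path $p_0\cdots p_l$ avoids the Gaussian point $z_1\in\partial\bP_v$, so in the horizontal strip $c-1\le\im\le c+1$ it is pinched on the far side of $z_1$: it enters at height $c-1$ no further right than $z'_1$, leaves at height $c+1$ no further right than $z'_2$, and at height $c$ cannot pass the limiting position $z_1^{+1}=z_1+1$ without hitting $z_1$. One distinguishes two cases: either some HN factor jumps over height $c$ (i.e.\ $\im p_{j-1}\le c-1<c+1\le\im p_j$ for some $j$), which is controlled by the slope inequality (1); or the path has a vertex at a height in $\{c-1,c,c+1\}$ strictly to the left of $z_1$. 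In either case, monotonicity of the $\|\cdot\|$-perimeter under inclusion of convex regions, together with $\ell\le\|\cdot\|$ and the two displayed $\ell$-bounds, reduces the length lost near $z_1$ to $\|z_1-z'_1\|-\|z'_1-z_1^{+1}\|+\|z_1-z'_2\|-\|z'_2-z_1^{+1}\|$; assumption (2) then forces $\sum_i\lfloor(\ell(\overline Z(E_i))-\re\overline Z(E_i))/2\rfloor\le r+s-1$. This is the desired contradiction, so $\bP_{i_*F}=\bP_v$, i.e.\ $F=E|_C$ for some $E\in\bM(v)$, and $\psi$ is surjective.

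\textbf{Main obstacle.} The delicate step is the corner estimate: one must show, \emph{uniformly} in the number of HN factors and in the exact location of the corner, that a convex lattice path inside $\bP_v$ from $0$ to $z_2$ missing $z_1$ loses at least the quantity on the left of (2) in $\|\cdot\|$-length near $z_1$ (carefully handling the ``jump'' case via (1) and the lattice geometry of $z'_1,z'_2,z_1^{+1}$), while simultaneously the passage from the sharp functional $\ell$ to the genuine norm $\|\cdot\|$, together with the floor functions, wastes no more than the right-hand side of (2); the two estimates must combine to yield a gap of exactly $1$, which is what squeezes out the inequality $h^0(C,F)\le r+s-1$.
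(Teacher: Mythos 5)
Your overall strategy --- reduce to maximality of the HN polygon via Theorem~\ref{thm_polygons}, then rule out $\bP_{i_*F}\subsetneq\bP_v$ by playing the lower bound $h^0(C,F)\geq r+s$ against Proposition~\ref{prop_upper bound} and a corner-cutting perimeter estimate at $z_1$ --- is the same as the paper's. But your execution has a genuine gap at the decisive step. You route the argument through the \emph{fine} bound $\sum_i\lfloor(\ell(\overline Z(E_i))-\re\overline Z(E_i))/2\rfloor$ and aim to show that a non-maximal polygon forces this integer down to $r+s-1$; the entire quantitative content (that the $\|\cdot\|$-length lost near $z_1$, minus the slack incurred in replacing $\ell$ by $\|\cdot\|$ and in taking floors, produces a gap of at least $1$) is asserted in a single sentence and then explicitly deferred to your ``Main obstacle'' paragraph. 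That is precisely the inequality the theorem turns on, and hypotheses (i)--(ii) are calibrated to a real-number estimate, not to an integrality gap of $1$; as written the proof does not close.

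The paper avoids all of this by using only the \emph{coarse} bound \eqref{eq_coarse upper bound}. Writing $\hbar=\sqrt{(r+s-\chi)^2+4(mr-c)^2}+\sqrt{(r+s)^2+4c^2}$, the chain $r+s\leq h^0(X,i_*F)\leq\frac{\|i_*F\|+\chi}{2}\leq\frac{\hbar+\chi}{2}$ gives $\hbar+\chi-2(r+s)\geq\hbar-\|i_*F\|$. Containment of $\bP_{i_*F}$ in the convex polygon $\bP_{0z_1'z_1^{+1}z_2'z_2}$ --- this is where (i) enters, and it replaces your two-case analysis of jumps versus vertices near height $c$ --- gives $\hbar-\|i_*F\|\geq\|z_1-z_1'\|-\|z_1'-z_1^{+1}\|+\|z_1-z_2'\|-\|z_2'-z_1^{+1}\|$, while the elementary estimate $\sqrt{a^2+b}-a<b/(2a)$ gives $\hbar+\chi-2(r+s)<\frac{2c^2}{r+s}+\frac{2(mr-c)^2}{r+s-\chi}$. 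Hypothesis (ii) chains these into a strict contradiction with no floors, no $\ell$, and no case distinction; since $h^0$ is an integer this already yields $h^0\leq r+s-1$, so nothing is gained by your detour through the fine bound. (The $\ell$/floor analysis you are reaching for is exactly what the paper deploys later, in Theorem~\ref{thm_surj2}, for special Mukai vectors where the coarse bound is insufficient --- but there it requires substantial extra case-by-case work, which is the part you have omitted here.)
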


\begin{proof}
 Suppose we have $\bP_v\neq \bP_{i_\ast F}$ for some $F\in\BN_C(v)$. By Proposition \ref{prop_upper bound} and the convexity, we have
\begin{equation*}
    r+s \leq h^0(C, F)= h^0(X,i_*F) \leq \frac{\|i_* F\|+\chi}{2} \leq \frac{\hbar+\chi}{2}.
\end{equation*}
where   $ \hbar=\sqrt{(r+s-\chi)^2+4(mr-c)^2}+ \sqrt{(r+s)^2+4c^2}.$ Then we get 
\begin{equation}\label{eq:surjective}
    \frac{\hbar + \chi}{2} - (r+s)\geq   \frac{\hbar + \chi}{2} - \frac{\|i_\ast F\|+\chi}{2} =\frac{\hbar- \|i_\ast F\|}{2}.
\end{equation}
However, note that the polygon $\bP_{0z'_1z_1^{+1}z_2'z_2}$ is convex under the assumption (i), we have\begin{equation}
    \begin{aligned}
\hbar-  \|i_\ast F\| &\geq \|z_1 -z'_1\|-\|z'_1- z_1^{+1}\|+ \|z_1 -z'_2\|-\|z'_2- z_1^{+1}\|.
    \end{aligned}
\end{equation}
Combined with assumption (ii), we get
\begin{align}
{\hbar+ \chi} - 2(r+s) &= \sqrt{(r+s)^2+4c^2}-(r+s)        \nonumber\\
& \mathrel{\phantom{=}}+ \sqrt{(r+s-\chi)^2+4(mr-c)^2}-(r+s-\chi)             \nonumber \\
&< \frac{2c^2}{r+s}+\frac{2(mr-c)^2}{r+s-\chi}.\label{eq_maximal difference} \\ & \leq  \hbar-  \|i_\ast F\|
\end{align}
which contradicts to \eqref{eq:surjective}.  This proves the assertion.

\end{proof}

As an application,  we get an explicit criterion for $\psi$ being surjective for  $v^2\geq 0$. 
\begin{corollary}\label{cor:surj2}
The  restriction map $\psi:\bM(v)\to \BN_C(v)$ is bijective if we further have  $$ g\geq 4r^2+1.$$
\end{corollary}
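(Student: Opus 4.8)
The plan is to combine the injectivity already supplied by Corollary~\ref{cor:inj num} with the surjectivity criterion of Theorem~\ref{thm:surj}: since $g-1\ge 4r^2$ automatically implies \eqref{eq:inj-cond2}, it suffices to verify the two numerical hypotheses (i) and (ii) of Theorem~\ref{thm:surj}. Throughout one uses the standing assumptions \eqref{eq:inj-cond} (in particular $r\ge 2$ and $mr>c>0$), the Mukai relation $rs=c^2(g-1)-\tfrac{v^2}{2}$, and the bound $0\le\tfrac{v^2}{2}\le r-2$.

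Hypothesis (i) is immediate. Writing $s-r=\tfrac1r\big(c^2(g-1)-\tfrac{v^2}{2}-r^2\big)$ and $s-r-\chi=\tfrac1r\big((g-1)(mr-c)^2-\tfrac{v^2}{2}-r^2\big)$, both expressions are positive because $g-1\ge 4r^2>\tfrac{v^2}{2}+r^2$, and in fact $s-r\ge\tfrac{c^2(g-1)}{2r}$ gives $\tfrac{s-r}{c}\ge 2$ on its own, so (i) holds with room to spare.

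The substance of the proof is (ii). Setting $a=\tfrac{s-r}{c}$, $b=\tfrac{\gamma^2 s-r}{\gamma c}$ and $M=2H^2+4=4g$, a direct computation of the four vectors entering (ii) shows that its left-hand side equals $f(a)+f(b)$, where $f(t)=\dfrac{2t-1}{\sqrt{t^2+M}+\sqrt{(t-1)^2+M}}$; on the other hand, using $r+s\ge\tfrac{c^2(g-1)}{r}$ and $r+s-\chi\ge\tfrac{(g-1)(mr-c)^2}{r}$ (again from the Mukai relation), the right-hand side of (ii) is at most $\tfrac{4r}{g-1}\le\tfrac1r<1$. Hence it is enough to prove $f(a)\ge\tfrac12$ and $f(b)\ge\tfrac12$; since $a,b>0$ and $(t-1)^2\le t^2$, for this it in turn suffices to establish the quadratic inequalities $3a^2-4a+1\ge M$ and $3b^2-4b+1\ge M$. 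I would prove these by observing that the Mukai relation furnishes lower bounds for $a$ and $b$ that are affine in $g-1$ with positive slope, which makes each of $3a^2-4a+1-M$ and $3b^2-4b+1-M$ monotone increasing in $g-1$ throughout the range $g-1\ge 4r^2$; one is then reduced to checking the two inequalities at the single point $g-1=4r^2$, where they become elementary polynomial inequalities in $r,c,m$. Together with (i) and Corollary~\ref{cor:inj num} this yields bijectivity of $\psi$.

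The main obstacle is the base-case verification in (ii). The tightest regimes are $c=1$ and $mr-c=1$, where the affine lower bounds for $a$, respectively for $b$, have the smallest slope; there one has to use that a small value of $r$ forces $\tfrac{v^2}{2}$ to be small, and a short case check (including the degenerate possibility $r=2$, for which necessarily $v^2=0$) confirms that the constant $4r^2$ is still large enough. Once the base cases are settled, the monotonicity argument takes care of all larger genera, and the remaining manipulations are routine estimates for the function $t\mapsto\sqrt{t^2+M}$.
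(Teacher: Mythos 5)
Your proposal is correct and follows the same overall strategy as the paper: both arguments reduce the corollary to hypotheses (i) and (ii) of Theorem~\ref{thm:surj}, using the Mukai relation $rs=c^2(g-1)-\tfrac{v^2}{2}$ together with $g-1\geq 4r^2$ (hence $s\geq 4rc^2$) to make the slopes $a=\tfrac{s-r}{c}$ and $b$ large; injectivity comes from Corollary~\ref{cor:inj num} in both cases. The one genuine divergence is in how (ii) is organized. The paper treats the two telescoping differences asymmetrically: it shows the first one, $\|z_1-z_1'\|-\|z_1'-z_1^{+1}\|$, already exceeds $\tfrac{2c^2}{s}+\tfrac{2c^2}{r+s}$, which dominates the entire right-hand side because $\tfrac{2c^2}{s}\geq\tfrac{2(mr-c)^2}{r+s-\chi}$ by the Mukai relation, and then only needs the second difference to be nonnegative. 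You instead split symmetrically, bounding each difference below by $\tfrac12$ via the sufficient condition $3t^2-4t+1\geq 4g$ and bounding the right-hand side by $\tfrac{4r}{g-1}\leq\tfrac1r<1$. Your route therefore requires a nontrivial lower bound on the second slope $b$ as well, and the borderline cases you flag are real: e.g.\ for $mr-c=1$ and $g-1=4r^2$ one gets $b\geq 3r-\tfrac{r-2}{rc^2}$ and $3b^2-4b+1-4g\geq 11r^2-30r+33>0$, so the constant $4r^2$ does suffice, and the monotonicity in $g-1$ reduces everything to that base point as you claim. Both proofs are correct; the paper's is slightly leaner because the second difference is dispatched for free, while yours is more symmetric but leaves two families of tight elementary inequalities to check instead of one.
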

\begin{proof}
 As $r>1+\frac{v^2}{2}$, we have  $$ s=\lfloor \frac{(g-1)c^2+1}{r}\rfloor \geq 4rc^2.$$ This gives
\begin{equation}\label{eq:surj1}
\begin{aligned}
s-r&\geq 4rc^2-r\geq 3c\\
     s-r-\chi & =\frac{(g-1)(mr-c)^2}{r}-\frac{v^2}{2r}-r
     \geq 4r(mr-c)^2-r \geq  3(mr-c),
\end{aligned}
\end{equation}
as $mr-c>0$.  Moreover, one can compute that  
 $$\frac{2s-2r-c}{2s+2r+c}  \geq  \frac{8rc^2-2r-c}{8rc^2+2r+c} \geq \frac{6r-1}{10r+1}>  \frac{1}{r}  \geq \frac{4c^2}{s}.  $$
It follows that \begin{align*}
    \|z_1- z'_1\|-\|z'_1 -z_1^{+1}\| &= \sqrt{(\frac{s-r}{c})^2+4g}-\sqrt{(\frac{s-r}{c}-1)^2+4g} \\
    &> \frac{\frac{s-r}{c}-\frac{1}{2}}{\sqrt{(\frac{s+r}{c})^2+4(1+\frac{n}{c^2})}} \\
    &> \frac{2s-2r-c}{2s+2r+c}> \frac{2c^2}{s}+\frac{2c^2}{r+s}.
\end{align*}
and $\|z_1 -z'_2\|-\|z'_2- z_1^{+1}\| =  \sqrt{(\frac{s-r-\chi}{mr-c})^2+4g}  - \sqrt{(\frac{s-r-\chi}{mr-c}-1)^2+4g} \geq 0$. The assertion can be concluded from Theorem \ref{thm:surj}. 
\end{proof}

\begin{remark}\label{rmk:surj}
For $g$ sufficiently large, it is not hard to find Mukai vectors satisfying the conditions in Theorem \ref{thm:surj}. For instance, when $v^2=0$ and $g>84$, the Mukai vectors given in \cite{Feyzbakhsh2020} and \cite{Feyzbakhsh2020+} will automatically satisfy the conditions for any $m\geq 1$. However, when $g$ is small, it becomes impossible to find such Mukai vectors.
\end{remark}

\section{Surjectivity for special Mukai vectors} \label{sec:surj2}

According to Remark \ref{rmk:surj},  Theorem \ref{thm:surj} does not work well for small $g$. 
In this section, we develop a way to improve the estimate in \S \ref{sec:surj} for special Mukai vectors of square zero.  Let us first introduce the sharpness of the polygon $\bP_v$.
\begin{definition} 
Denote by $z_1^{+d}$ the point $r-s+d+c\sqrt{-1}$. 
Let $z_1', z_2'$ be the points as in the Theorem \ref{thm:surj}.
We say the polygon $\bP_v$ is $d$-sharp if for any $\bP_{i_*F}\neq \bP_v$, one of the following is true:
\begin{enumerate}
    \item $\bP_{i_*F}$ is contained in the polygon $\bP_{0z'_1z_1^{+d} z_2' z_2}$.
    \item $z_1^{+j}$ is a vertex of $\bP_{i_*F}$ for some $1 \leq j \leq d-1$.
\end{enumerate}
\end{definition}

There is a simple numerical criterion for the $d$-sharpness of $\bP_v$.
\begin{lemma}\label{lem_sharp}
With the notations as before, suppose that
\begin{equation}\label{eq:k-sharp}
\frac{s-r}{c}+\frac{\gamma^2 s-r}{\gamma c}\geq 2d
\end{equation}
where $\gamma=\frac{mr}{c}-1$, the polygon $\bP_{v}$ will be $d$-sharp.
\end{lemma}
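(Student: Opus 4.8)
The plan is to run a Euclidean–geometry argument on the Harder--Narasimhan polygon, in the spirit of the proof of Theorem~\ref{thm_polygons}. Suppose $\bP_{i_*F}\neq\bP_v$ for some $F\in\BN_C(v)$; then by Theorem~\ref{thm_polygons} one has $\bP_{i_*F}\subsetneq\bP_v=\bP_{0z_1z_2}$, and both polygons have the chord $\rL_{[0,z_2]}$ as an edge. Writing \emph{height} for the imaginary part, I encode $\bP_{i_*F}$ by its \emph{left boundary} $f$: the convex, piecewise-affine function on the height interval $[0,mr]$ whose graph is the part of $\partial\bP_{i_*F}$ coming from the HN filtration; similarly I let $g$ be the left boundary of $\bP_{0z_1'z_1^{+d}z_2'z_2}$. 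Since at each height both polygons are the segment between the common right boundary — the chord $\rL_{[0,z_2]}$ — and their respective left boundaries, conclusion (i) of $d$-sharpness is equivalent to $f\ge g$ on all of $[0,mr]$, while (ii) asks that $z_1^{+j}$ be a vertex of $\bP_{i_*F}$ for some $1\le j\le d-1$. So the target is: if $\bP_{i_*F}\neq\bP_v$, then $f\ge g$ everywhere or $z_1^{+j}$ is a vertex of $\bP_{i_*F}$ for some such $j$.

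The first step is the bookkeeping. A direct computation shows $z_1'=\tfrac{c-1}{c}z_1$ lies on $\rL_{[0,z_1]}$ at height $c-1$, and — using $v^2=0$, i.e.\ $rs=(g-1)c^2$ — that $z_2'$ lies on $\rL_{[z_1,z_2]}$ at height $c+1$. Hence $g$ agrees with the left boundary of $\bP_v$ on the heights $[0,c-1]$ and $[c+1,mr]$, so $\bP_{i_*F}\subseteq\bP_v$ already yields $f\ge g$ there, together with $f(c-1)\ge\re z_1'$ and $f(c+1)\ge\re z_2'$. A second short computation, again using $rs=(g-1)c^2$, rewrites the hypothesis $\tfrac{s-r}{c}+\tfrac{\gamma^2 s-r}{\gamma c}\ge 2d$ as $\tfrac12(\re z_1'+\re z_2')\ge\re z_1^{+d}=r-s+d$ (this is also exactly the convexity condition for $\bP_{0z_1'z_1^{+d}z_2'z_2}$, generalising assumption (i) of Theorem~\ref{thm:surj}). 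Finally, since $z_1$ is the apex of the triangle $\bP_v$, convexity forces $z_1\notin\bP_{i_*F}$: otherwise $z_1$ would be an extreme point of the convex set $\bP_{i_*F}$, which would then contain $\bP_{0z_1z_2}=\bP_v$, a contradiction; in particular $f(c)>r-s$.

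The decisive point is a dichotomy at height $c$. The breakpoints of $f$ are heights of vertices of $\bP_{i_*F}$, which are the lattice points $\overline Z(\widetilde E_i)$, hence integers; and $c$ is the only integer in the open interval $(c-1,c+1)$. So either (a) $c$ is a breakpoint of $f$ — then $(f(c),c)$ is a vertex of $\bP_{i_*F}$ and $f$ is affine on each of $[c-1,c]$ and $[c,c+1]$ — or (b) $c$ is not a breakpoint — then the affine piece of $f$ through height $c$ already covers $[c-1,c+1]$, whence $f(c)=\tfrac12(f(c-1)+f(c+1))\ge\tfrac12(\re z_1'+\re z_2')\ge r-s+d$. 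In case (b), and in case (a) when $f(c)\ge r-s+d$: on $[c-1,c]$ the affine functions $f$ and $g$ satisfy $f\ge g$ at both endpoints ($f(c-1)\ge\re z_1'=g(c-1)$ and $f(c)\ge r-s+d=g(c)$), hence on all of $[c-1,c]$, and likewise on $[c,c+1]$; combined with the two ranges handled above this gives $f\ge g$ on $[0,mr]$, i.e.\ (i). In the remaining situation one is in case (a) with $r-s<f(c)<r-s+d$; since $f(c)$ is an integer, $f(c)=r-s+j$ with $1\le j\le d-1$, and the vertex $(f(c),c)=z_1^{+j}$ of $\bP_{i_*F}$ yields (ii).

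The step I expect to be most delicate is the bookkeeping one: extracting cleanly from $v^2=0$ that $z_2'$ genuinely lies on $\rL_{[z_1,z_2]}$ at height $c+1$ and that the numerical hypothesis is precisely $\tfrac12(\re z_1'+\re z_2')\ge\re z_1^{+d}$, and then keeping track of which regime of the HN polygon one is in — a single edge straddling height $c$, versus a vertex sitting exactly at height $c$. Once these are pinned down, the sandwich $f\ge g$ is immediate from affineness of $f$ and $g$ on the two unit intervals $[c-1,c]$ and $[c,c+1]$, with no further calculation.
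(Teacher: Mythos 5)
Your proof is correct and takes essentially the same route as the paper's (much terser) argument: the paper observes that the only integer points in the interior of $\bP_v\setminus\bP_{0z_1'z_1^{+d}z_2'z_2}$ are the $z_1^{+j}$ with $1\le j\le d-1$ and that convexity of $\bP_{0z_1'z_1^{+d}z_2'z_2}$ is literally \eqref{eq:k-sharp}, which is exactly what your bookkeeping step and your dichotomy at the integer height $c$ make precise. The left-boundary-function formalism and the midpoint-convexity computation are just an explicit packaging of the same two observations, correctly executed.
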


\begin{proof}
From the definition of two polygons, one observes that the interior of $\bP_{v}-\bP_{0z'_1z_1^{+d} z_2' z_2}$  only contains $z_1^{+j} ~ (1\leq j\leq d-1)$ as integer points. If $\bP_{0z'_1z_1^{+d} z_2' z_2}$ is convex, then either $\bP_{i_\ast F}$ is contained in $\bP_{0z'_1z_1^{+d} z_2' z_2}$  or $z_1^{+j}$ is a vertex of $\bP_{i_\ast F}$.   A little writing reveals the convexity of this polygon literally means \eqref{eq:k-sharp}.
\end{proof}

The following is an enhancement of Theorem \ref{thm:surj} for special Mukai vectors. 

\begin{theorem}\label{thm_surj2}
Suppose   $g\geq 3$. Let $v=(g-1,k,k^2)\in \rH_{\alg}^\ast(X)$ be a primitive Mukai vector with $\gcd(g-1,k)=1$. Assume that   $(m,k)$ satisfies the following three conditions
\begin{itemize}
    \item $ g < 2k$ and $g\neq k$
    \item $ g < 2(m(g-1) - k)$ and $g\neq m(g-1)-k$
    \item either $k\nmid g+1$ or $m(g-1)-k \nmid g+1$
\end{itemize}
Then the restriction map $\psi:\bM(v)\to \BN_C(v)$ is surjective. 
\end{theorem}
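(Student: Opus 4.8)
The plan is to run the Harder--Narasimhan polygon argument of Section~\ref{sec:surj}. Since $v=(g-1,k,k^2)$ one has $v^2=2k^2(g-1)-2(g-1)k^2=0$, so $\bM(v)$ is a K3 surface and Section~\ref{sec:resriction} applies; a direct computation gives $v(-m)=(g-1,-k',k'^2)$ with $k'\defeq m(g-1)-k$, again a square-zero vector, and the three hypotheses on $(m,k)$ are visibly symmetric under $k\leftrightarrow k'$. From $3\le g<2k$ and $3\le g<2k'$ one gets $k,k'\ge 2$ and, multiplying the inequalities, $kk'>g-1$; the latter is exactly $(mr-c)s>rc$, so Corollary~\ref{cor:inj num} makes $\psi$ an injective morphism with stable image and puts Theorems~\ref{thm_1st wall} and \ref{thm_polygons} at our disposal. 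By Theorems~\ref{thm_polygons} and \ref{thm_1st wall}, $\psi$ is surjective iff $\bP_{i_*F}=\bP_v$ for every $F\in\BN_C(v)$, so it suffices to rule out the existence of some $F\in\BN_C(v)$ with $\bP_{i_*F}\subsetneq\bP_v$; I assume such an $F$ exists and derive a contradiction.

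First I would establish $2$-sharpness of $\bP_v$. For $v=(g-1,k,k^2)$ one computes $\tfrac{s-r}{c}=k-\tfrac{g-1}{k}$, and since $\gamma=\tfrac{mr}{c}-1=\tfrac{k'}{k}$ also $\tfrac{\gamma^2 s-r}{\gamma c}=k'-\tfrac{g-1}{k'}$; as $g<2k$ and $g<2k'$ force $\tfrac{g-1}{k}<2$ and $\tfrac{g-1}{k'}<2$, the left-hand side of \eqref{eq:k-sharp} with $d=2$ is at least $k+k'-4+\tfrac2k+\tfrac2{k'}=m(g-1)-4+\tfrac2k+\tfrac2{k'}\ge 4$ whenever $m(g-1)=k+k'\ge 6$. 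The only remaining possibility is $k=k'=2$, which forces $g=3$, $m=2$ and then $2\mid g+1$ for both $k$ and $k'$, a case excluded by the third hypothesis. Hence Lemma~\ref{lem_sharp} gives that $\bP_v$ is $2$-sharp, so either (A) $\bP_{i_*F}\subseteq\bP_{0z_1'z_1^{+2}z_2'z_2}$, or (B) $z_1^{+1}$ is a vertex of $\bP_{i_*F}$.

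In Case (A) I would apply the integral bound \eqref{eq_fine upper bound} of Proposition~\ref{prop_upper bound} along the boundary of the small polygon, as in the proof of Theorem~\ref{thm:surj} but keeping careful track of the $\gcd$-term in $\ell$. Because $v^2=0$ and $\gcd(g-1,k)=\gcd(g-1,k')=1$, both slanted edge vectors $(g-1-k^2,k)$ and $(k'^2-(g-1),k')$ of $\bP_v$ are primitive, so the only lattice points the small polygon pinches against are the ones nearest $z_1$ on the two slanted sides; the crucial $\gcd$-corrections occur precisely at $z_1^{+2}$ and at the corresponding point on the $z_2$-side, and these are governed by $\gcd(g+1-k^2,k)=\gcd(g+1,k)$ and $\gcd(g+1-k'^2,k')=\gcd(g+1,k')$. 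A short computation of $\lfloor\tfrac{\ell(\cdot)-\re\overline Z(\cdot)}{2}\rfloor$ at these two points shows that one contribution is $\le s$ with equality iff $k\mid g+1$, the other is $\le r$ with equality iff $k'\mid g+1$; summing these over a convex HN polygon inside $\bP_{0z_1'z_1^{+2}z_2'z_2}$ gives $h^0(X,i_*F)\le r+s-1$ unless both $k\mid g+1$ and $k'\mid g+1$, which the third hypothesis forbids. Then $h^0(C,F)=h^0(X,i_*F)\le r+s-1<r+s$ contradicts $F\in\BN_C(v)$.

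In Case (B), a vertex at $z_1^{+1}$ produces a step $\widetilde E_i$ of the $\overline\sigma$-HN filtration of $i_*F$ with $\overline Z(\widetilde E_i)=z_1^{+1}$; writing $E_1$ for the first HN factor inside it, the rank bound $\rk(E_1)\le g-1$ (valid here because $gc^2-(r+1)s=0$ and $g(c-mr)^2-(r+1)\tilde s=0$) together with the emptiness of $\Omega_v^+(\rL_{(o,o')})$ from Lemma~\ref{lem_no int}(i) (applicable since $r/\gcd(r,c)=g-1$) leave only $v(E_1)=(g-1,k,k^2-1)$. Then $\rk(i^*E_1)=\rk F=g-1$ and $\deg(i^*E_1)=kmH^2=2mk(g-1)=\deg F$, so any nonzero map $i^*E_1\to F$ is an isomorphism by slope stability of $F$ (a proper quotient would have non-positive slope, while $\mu_H(F)=2mk>0$), whence $F\cong E_1|_C$ and, running Theorem~\ref{thm_restriction map} in reverse, $h^0(C,F)=h^0(X,E_1)\le\lfloor\tfrac{\ell(z_1^{+1})+\chi(E_1)}{2}\rfloor$ with $\chi(E_1)=r+s-1$ and $\ell(z_1^{+1})^2=(r+s+1)^2-4\bigl(k^2-\gcd(g,k)^2\bigr)$; this bound is $<r+s$ unless $k\mid g$, and the residual possibility $k\mid g$ (together with the symmetric one on the $v(-m)$-side) must be excluded by a finer analysis of the HN polygon and the failure of equality in \eqref{eq_fine upper bound} for a $\mu_H$-stable bundle of positive square. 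This last step is the delicate point and the one I expect to cost the most work. Having ruled out (A) and (B), we get $\bP_{i_*F}=\bP_v$ for all $F\in\BN_C(v)$, hence $\psi$ is surjective.
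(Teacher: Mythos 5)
Your overall architecture (injectivity from Corollary~\ref{cor:inj num}, reduction to $\bP_{i_*F}=\bP_v$ via Theorems~\ref{thm_1st wall} and~\ref{thm_polygons}, then sharpness plus a case split) matches the paper, but the paper works with $3$-sharpness (and $4$-sharpness for $(g,k,m)=(5,3,3)$), splitting into three cases: containment in $\bP_{0z'_1z_1^{+3}z_2'z_2}$, $z_1^{+1}$ a vertex, and $z_1^{+2}$ a vertex. By stopping at $2$-sharpness you have folded the hardest configuration --- an HN polygon passing through $z_1^{+2}$ --- into your Case~(A), and your proposed treatment of (A) does not handle it. Concretely, the extremal path $0\to z_1'\to z_1^{+2}\to z_2'\to z_2$ is contained in $\bP_{0z'_1z_1^{+2}z_2'z_2}$, and for it the coarse norm comparison gives essentially no slack; moreover the gcd-correction in $\ell$ is taken on the charge $\overline Z(E_i)$ of each HN \emph{factor}, and the two edges of this extremal path adjacent to $z_1^{+2}$ have imaginary part $1$, so $\ell=\|\cdot\|$ there and there is no correction ``at $z_1^{+2}$ governed by $\gcd(g+1,k)$'' of the kind you describe. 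In the paper this configuration is exactly case~(iii), and killing it needs both the hypothesis $k\nmid g+1$ or $m(g-1)-k\nmid g+1$ and a parity argument (the integer $\|z_1^{+1}\|-\hbar_1$ is odd and shown to exceed $1$, hence $\geq 3$), fed by a lower bound on $\|E_1\|-\ell(E_1)$ for the \emph{first} HN factor --- which is where $g<2k$ and $g\neq k$ actually enter, by forcing $\im\overline Z(E_1)\nmid\re\overline Z(E_1)$. Your Case~(A) sketch reproduces none of this, and it also skips the finitely many triples $(5,3,m),(6,4,3),(8,5,2)$ for which even the containment estimate fails and the paper must pass to the convex hull of the integer points of the small polygon.

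Case~(B) also has a genuine gap: from $\overline Z(\widetilde E_i)=z_1^{+1}$ you only learn that $v(\widetilde E_i)=(r_1,k,s_1)$ with $r_1-s_1=g-k^2$, and neither the rank bound nor the emptiness of $\Omega_v^+(\rL_{(o,o')})$ pins this down to $(g-1,k,k^2-1)$; nor need the vertex be reached in one step, so $E_1\neq\widetilde E_i$ in general. The identification $F\cong E_1|_C$ and the transfer $h^0(C,F)=h^0(X,E_1)$ therefore rest on an unproved classification, and you yourself leave the residual sub-case open. (As an aside, that sub-case $k\mid g$ is vacuous: $k\mid g$ with $0<g<2k$ forces $g=k$, excluded by hypothesis --- but this does not repair the unjustified reduction preceding it.) The paper's case~(ii) avoids all of this by bounding $h^0(X,i_*F)$ directly: it splits the sum in \eqref{eq_fine upper bound} at the index $j$ with $\overline Z(\widetilde E_j)=z_1^{+1}$ and combines the $\ell$-versus-$\|\cdot\|$ gap for the first and last HN factors with the parity of $\|z_1^{+1}\|-\hbar_1$ and $\|z_1^{+1}-z_2\|-\hbar_2$. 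As it stands, your argument would need to be substantially rebuilt along these lines.
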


\begin{proof}
By Lemma \ref{lem_sharp},  if there is $\bP_{i_\ast F}\neq \bP_v$ for some $F$, the polygon $\bP_{v}$ will be at least $3$-sharp. Therefore, one of the following is true:
\begin{enumerate}
\item $\bP_{i_*F}$ is contained in the polygon $\bP_{0z'_1z_1^{+3} z_2' z_2}$. 
    \item $z_1^{+1}=g-k^2+k\sqrt{-1}$ is a vertex of $\bP_{i_*F}$ \label{item_+1}
    \item $z_1^{+2}=g+1-k^2+k\sqrt{-1}$ is a vertex of $\bP_{i_*F}$ \label{item_+2}
\end{enumerate}
We will analyse them by cases. Let us first show that case (i) is impossible if $(g,k,m)\neq (5,3,3)$. 
By \eqref{eq:surjective}, it suffices to show that 
\begin{equation}\label{eq:est-ineq}
\hbar-  \|i_\ast F\| >\hbar+\chi-2(g-1+k^2).
\end{equation}
when $\bP_{i_*F}\subseteq \bP_{0z'_1z_1^{+3} z_2' z_2}$.   Set $\tilde{k} = m(g-1)-k$.  As in the proof of Theorem \ref{thm:surj},  from the convexity and a direct computation, one can get 
\begin{equation}\label{eq:est-3}
\begin{aligned}
    \hbar-  \|i_\ast F\|& > \|z_1 -z'_1\|-\|z'_1 -z_1^{+3}\|+ \|z_1 -z'_2\|-\|z'_2- z_1^{+3}\|\\ 
    & = \sqrt{(\frac{k^2-g+1}{k})^2+4g} - \sqrt{(\frac{k^2-g+1}{k}-3)^2+4g} \\    & \mathrel{\phantom{>}} + \sqrt{(\frac{\tilde k^2-g+1}{\tilde k})^2+4g}  - \sqrt{(\frac{\tilde k^2-g+1}{\tilde k}-3)^2+4g}\\ &  \geq  \frac{4k^2}{\sqrt{(g-1+k^2)^2+4k^2}+(g-1+k^2)}  +\frac{4\tilde k^2}{\sqrt{(g-1+\tilde k^2)^2+4\tilde k^2}+(g-1+\tilde k^2)}  \\    &=\hbar+\chi-2(g-1+k^2) 
\end{aligned}
\end{equation}
whenever $(g,k,m)\notin \Big\{(5,3,m), (6,4,3), (8,5,2)\Big\}$ satisfies our assumption.  

In the case $(g, k,m)=(6,4,3)$, $(8,5,2)$, or $(5,3,m)$ with $m\geq 4$, though the inequality \eqref{eq:est-3} fails, one can give an improvement of the estimate \eqref{eq:est-3} by considering  the convex hull of integer points in $\bP_{0z_1'z_1^{+3}z_2'z_2}$. In those cases, the convex hull is a convex polygon  with vertices $z_1, z_1^{+3}, z_1', z_2',$ and $z_3$,  where $z_3$ is given as below:
\begin{itemize}
\item  $(g, k,m)=(5,3,m)$, $z_3=-3+2\sqrt{-1}$;
\item  $(g, k,m)=(6,4,3)$, $z_3=-8+3\sqrt{-1}$;
\item  $(g, k,m)=(8,5,2)$, $z_3=-14+4\sqrt{-1}$.
\end{itemize}
Then one can get
\begin{equation}
     \hbar-  \|i_\ast F\| > \|z_1\|-\|z_3\|- \|z_3-z_1^{+3}\|+ \|z_1 -z'_2\|-\|z'_2- z_1^{+3}\|
\end{equation}
A computer calculation of their values shows that \eqref{eq:est-ineq} still holds.

In case (ii) and (iii), if $z_1^{+1}$  or $z_1^{+2}$ is a vertex of  $\bP_{i_*F}$, there exists $\widetilde{E}_j \subset i_*F$ in the HN-filtration \eqref{eq:HN} such that $\overline Z(\widetilde{E}_j)=z_1^{+1}$ (respectively, $z_1^{+2}$).  Then we have 
\begin{equation}\label{up-bound}
\begin{aligned}
   h^0(X,i_*F)  &\leq \sum_i \left\lfloor \frac{ l (E_i)+\chi(E_i)}{2}\right\rfloor \nonumber \\
    &\leq  \sum_{i\leq j} (\left\lfloor \frac{l (E_i)+\chi(E_i)}{2}\right\rfloor -\frac{\chi(E_i)}{2}) + \sum_{i > j}  (\left\lfloor \frac{l (E_i)+\chi(E_i)}{2}\right\rfloor-\frac{\chi(E_i)}{2} ) +\frac{\chi}{2}\\
    &  \leq \frac{\hbar+\chi}{2} \nonumber
\end{aligned}    
\end{equation}
For simplicity, we may use $\frac{\hbar_1}{2}$ and $\frac{\hbar_2}{2}$ to denote the first two terms in the second row. As $h^0(X,i_\ast F)\geq g-1+k^2$, following the argument in Theorem~\ref{thm:surj}, it suffices to prove the inequality 
\begin{equation*}
    \frac{\hbar + \chi}{2} - (g-1+k^2)< \frac{\hbar-\hbar_1-\hbar_2}{2}, 
\end{equation*}
 or equivalently,
\begin{equation}\label{eq:surjective++}
   \hbar_1+\hbar_2 < 2(g-1+k^2) -\chi=\|z_1^{+1}\| +\|z_1^{+1} -z_2\|-2.
\end{equation}
For case ii), a direct computation  shows
    \begin{align}
    \|z_1^{+1}\|-\hbar_1& = \sum_{i\leq j} \|E_i\|-\hbar_1-(\sum_{i\leq j}\|E_i\|-\|z_1^{+1}\|) \nonumber\\  
         & \geq  \sum\limits_{i\leq j}  (\|E_i\|- \ell(E_i))-(\sum_{i\leq j}\|E_i\|-\|z_1^{+1}\|) \nonumber \\  &\geq  (\|E_1\|- \ell(E_1))-(\sum_{i\leq j}\|E_i\|-\|z_1^{+1}\|)\nonumber \\ & \geq  \frac{3}{\sqrt{k^2+4g-3}}-(\sum_{i\leq j}\|E_i\|-\|z_1^{+1}\|) \label{ie1}\\ &\geq \frac{3}{\sqrt{k^2+4g-3}}-(\|z'_1\| + \|z'_1-z_1^{+1}\|-\| z_1^{+1}\| ) \label{ie:conv}\\ &>0. \label{ie:dircom} 
    \end{align}
Let us explain why the inequality \eqref{ie1} holds.  Note that $\overline{Z}(E_1)=x+y\sqrt{-1}$ satisfies
\[k^2-g \leq -\frac{x}{y} \leq k^2-g+1.\]
Then  we have $-x< ky$ and $y \nmid x$ by our assumption $g-1\neq k$, $g\neq k$, and $g<2k$. This will give 
\begin{align*}
    \|E_1\|- \ell(E_1) &\geq \sqrt{x^2 +4g y^2}- \sqrt{x^2+4(g-1)y^2+y^2} \quad \textrm{(since $ y\nmid x$)}\\
    & \geq \frac{3y^2}{2 \sqrt{x^2+(4g-3)y^2}} \\
    & \geq \frac{3y^2}{2 \sqrt{k^2y^2+(4g-3)y^2}} \\
    & > \frac{3}{\sqrt{k^2+4g-3}} \quad \textrm{(since $y \geq 2$, which is a consequence of $ y\nmid x$)}.
\end{align*}
The inequality \eqref{ie:dircom} holds  because
\begin{align*}
    \|z_1^{+1} -z'_1 \|+\|z'_1 \| -\|z_1^{+1}\|&= \sqrt{(\frac{k^2-(g-1)}{k}-1)^2+4g} + (k-1)\sqrt{(\frac{k^2-(g-1)}{k})^2+4g } -(k^2 +g) \\
    &=\sqrt{(\frac{k^2-(g-1)}{k}-1)^2+4g}- \left(k-1+\frac{g+1}{k}\right)\\
    &\mathrel{\phantom{=}}+(k-1)\sqrt{(\frac{k^2-(g-1)}{k})^2+4g } -\left((k^2 +g)-(k-1+\frac{g+1}{k})\right)\\
    &< \frac{2g(k-1)}{k^3-k^2+(g+1)k} + \frac{-2 g (k-1)^2}{k^4-k^3+(g+1)k^2-(g+1)k}\\
    &= \frac{2g(k-1)}{(k^2+g+1)(k^2-k+g+1)} \\
    & \leq \frac{3}{\sqrt{k^2+4g-3}},
\end{align*}
when $k\geq \frac{g+1}{2}\geq 2$. 
Note that $ \|z_1^{+1}\|-\hbar_1=k^2+g- 2\sum\limits_{i\leq j} \left\lfloor \frac{l (E_i)+\chi(E_i)}{2}\right\rfloor-\sum\limits_{i\leq j} \chi(E_i)$ is an even number, this yields
$  \|z_1^{+1}\|-\hbar_1 \geq 2.$ 

Next,   recall that  $\widetilde{E}_l=i_\ast F$ in the HN filtration \eqref{eq:HN}, we can get
\begin{align*}
    \|z_1^{+1} -z_2\|-\hbar_2 & \geq  \sum\limits_{i >j}  (\|E_i\|- \ell(E_i))-(\sum_{i> j}\|E_i\|-\|z_1^{+1}- z_2\|) \\
    &\geq  (\|E_l\|- \ell(E_l))-(\|z_1^{+1}-z'_2\| + \|z'_2- z_2\|-\|z_1^{+1} - z_2\| ) \\
    &> \frac{3}{\sqrt{\tilde k^2+4g-3}} - \frac{2g(\tilde k-1)}{(\tilde k^2+g+1)(\tilde k^2-\tilde k+g+1)}>0
\end{align*}
as $\tilde{k}>\frac{g+1}{2}$. Combining them together, we can obtain \eqref{eq:surjective++}. 

For case (iii), if $k\nmid g+1$, we have 
\begin{align}
    \|z_1^{+1}\|-\hbar_1& =  \sum_{i\leq j}\|E_i\|-\hbar_1+\|z_1^{+1}\|-\sum_{i\leq j}\|E_i\| \nonumber \\ 
    &\geq \|E_1\|-\ell(E_1) +\|z_1^{+1}\|-\|z'_1\|-\|z'_1-z_1^{+2}\| \nonumber \\ 
    &\geq \frac{3}{\sqrt{k^2+4g-3}} + \|z_1^{+1}\|-\|z'_1\|-\|z'_1-z_1^{+2}\| \\ &>1 \label{ie:+2}
\end{align}
Here, the inequality \eqref{ie:+2} holds because
\begin{align*}
    \|z_1^{+2}-z'_1 \|+\|z'_1\| -\|z_1^{+1}\|
    &=\sqrt{(\frac{k^2-(g-1)}{k}-2)^2+4g}+(k-1)\sqrt{(\frac{k^2-(g-1)}{k})^2+4g} -(k^2+g)\\
    &=\sqrt{(\frac{k^2-(g-1)}{k}-2)^2+4g}-(k-2+\frac{g+1}{k})\\
    &\mathrel{\phantom{\geq}} +  (k-1)\sqrt{(\frac{k^2-(g-1)}{k})^2+4g}-(k^2-k+(g+1)-\frac{g+1}{k})-1 \\
    &\leq \frac{2 g (2 k-1)}{k \left(g+(k-1)^2\right)}-\frac{2 g (k-1)}{k \left(g+k^2+1\right)}-1\\
    &= \frac{2 g \left(k^2+2k+g-1\right)}{\left(k^2+g+1\right)\left(k^2-2k+g+1\right)} -1\\
    &< \frac{3}{\sqrt{k^2+4g-3}}-1.
\end{align*}
Note that $\|z_1^{+1}\|-\hbar_1$ is an odd number, this yields $\|z_1^{+1}\|-\hbar_1 \geq 3$. Similarly, one can get  
\begin{align*}
    \|z_1^{+1}- z_2\|-\hbar_2\geq 3
\end{align*}
under the assumption $m(g-1)-k \nmid g+1 $. Since both of them are at least positive under our assumption, we get \eqref{eq:surjective++} as well. This finishes  the proof for $(g,k,m)\neq (5,3,3)$.

For the remaining case $(g,k,m)=(5,3,3)$, we have to make use of the $4$-sharpness of $\bP_v$.  We just need to verify  $\bP_{i_*F}$ is not contained in  $\bP_{0z'_1z_1^{+4} z_2' z_2}$ and $z_1^{+3}=-2+3\sqrt{-1}$ is not a vertex of $\bP_{i_\ast F}$. As above, by using the convex hull of integer points in  $\bP_{0z'_1z_1^{+4} z_2' z_2}$, we have 
\begin{equation*}
\begin{aligned}
 \hbar-  \|i_\ast F\| \geq & \|z_1\|-\|-3+2\sqrt{-1}\|- \|-3+2\sqrt{-1}-z_1^{+4}\| + \|z_1 -z'_2\|-\|z'_2- z_1^{+4}\|\\ 
    &=-2 \sqrt{6}-\sqrt{89}+\sqrt{205}+\frac{\sqrt{7549}}{9}-\frac{\sqrt{3301}}{9} \\
    &>\hbar+\chi-2(g-1+k^2).
\end{aligned}
\end{equation*}
which show that $\bP_{i_*F}$ cannot lie in  $\bP_{0z'_1z_1^{+4} z_2' z_2}$.  Moreover, a similar estimate of $\|E_1\|-\ell(E_1)$ and $\|E_l\|-\ell(E_l)$ in (ii) and (iii) shows that $z_1^{+3}$ is not a vertex of $\bP_{i_\ast F}$. 
\end{proof}

\begin{remark}
One can also directly check the small genera cases by running  the computer program in \cite[Section 4]{Feyzbakhsh2020+} .
\end{remark}

\section{Surjectivity of the tangent map}\label{sec:iso}
 
In this section, we adapt Feyzbakhsh's approach to study the surjectivity of the tangent map and obtain a sufficient condition for the restriction map being an isomorphism.  
\begin{theorem}\label{thm:iso}
 Let $v=(r,c,s)\in\Halg(X)$ be a  Mukai vector satisfying \eqref{eq:inj-cond} and $\eqref{eq:inj-cond2}$.  The  morphism $$\psi:\bM(v)\to \BN_C(v)$$ is an isomorphism whenever the following conditions hold \begin{enumerate}
     \item $\psi$ is surjective;
     \item $h^0(X,E) = r+s$ for any $E \in \bM(v)$;
     \item there exists $\sigma \in  \rL_{(\pi_{v(-m)},\pi_{v_K})}\cap V(X)$ such that 
     \begin{equation}
    \Omega^+_{v(-m)}(\rL_{(o,\sigma]}) \cap \Halg(X) = \Omega^+_{v_K}(\rL_{(o,\sigma]}) \cap \Halg(X)=\emptyset, 
    \end{equation}
    where $v_K = (s,-c,r)$;
    \item $2s>v^2+2c^2$, or $2s>v^2+2$ and $\gcd(c,s)=1$.
 \end{enumerate} 
\end{theorem}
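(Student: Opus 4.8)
The plan is to deduce the isomorphism from surjectivity of $d\psi$ pointwise. By Corollary~\ref{cor:inj num} --- whose hypotheses are in force --- together with (1) and (2), $\psi$ is a bijective morphism with stable image, $\bM(v)$ is smooth and irreducible of dimension $v^2+2$, and hence $\BN_C(v)$ is irreducible of the same dimension, so $\dim T_F\BN_C(v)\ge v^2+2$ for all $F=E|_C$. It therefore suffices to show that $d\psi_E\colon\operatorname{Ext}^1_X(E,E)\to T_{E|_C}\BN_C(v)$ is surjective for every $E\in\bM(v)$: since $\dim\operatorname{Ext}^1_X(E,E)=v^2+2$, this forces $\dim T_F\BN_C(v)=v^2+2$, so $\BN_C(v)$ is smooth, $d\psi$ is everywhere an isomorphism, and a bijective \'etale morphism of complex varieties is an isomorphism.

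\textbf{Homological reduction.} Applying $\operatorname{Hom}_X(E,-)$ to the triangle $E(-C)\to E\to i_*(E|_C)$ and using $\operatorname{Hom}_X(E,E(-C))=0$ (because $\mu_H(E)>\mu_H(E(-C))$) and $\operatorname{Hom}_X(E,E)\xrightarrow{\sim}\operatorname{Hom}_C(E|_C,E|_C)$ (stability of $E|_C$, Theorem~\ref{thm_restriction map}) yields $\ker d\psi\cong\operatorname{Ext}^1_X(E,E(-C))$ and $\operatorname{coker}d\psi\cong\ker\!\big(\operatorname{Ext}^2_X(E,E(-C))\to\operatorname{Ext}^2_X(E,E)\big)$. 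As $K_X\cong\cO_X$, dualising over $X$ and $C$ shows that this connecting map is Serre-dual to the restriction $H^0(X,\mathcal{E}nd(E)(mH))\to H^0(C,\mathcal{E}nd(E|_C)\otimes K_C)$. Meanwhile, using $h^0(E|_C)=r+s$ (Theorem~\ref{thm_restriction map} and (2)), $T_F\BN_C(v)=\ker\!\big(\operatorname{Ext}^1_C(F,F)\to\operatorname{Hom}_\CC(H^0(F),H^1(F))\big)$, and the Serre-dual annihilator of this kernel inside $H^0(C,\mathcal{E}nd(F)\otimes K_C)$ is the image of the Petri map $\mu_0\colon H^0(C,F)\otimes H^0(C,K_C\otimes F^\vee)\to H^0(C,\mathcal{E}nd(F)\otimes K_C)$. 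Now (2) gives $H^1(X,E)=0$ (since $h^0(X,E)=\chi(E)=r+s$ and $H^2(X,E)=\operatorname{Hom}_X(E,\cO_X)^\vee=0$), hence $H^1(X,E^\vee)=0$, hence $H^0(X,E)\xrightarrow{\sim}H^0(C,F)$, $H^0(X,E^\vee(mH))\xrightarrow{\sim}H^0(C,K_C\otimes F^\vee)$ and $H^1(X,E^\vee(mH))=0$; so $\mu_0$ is the composite of the multiplication $\operatorname{mult}\colon H^0(X,E)\otimes H^0(X,E^\vee(mH))\to H^0(X,\mathcal{E}nd(E)(mH))$ with restriction to $C$. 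Comparing the two descriptions, $d\psi_E$ is surjective onto $T_F\BN_C(v)$ precisely when $\operatorname{im}(\operatorname{mult})$ and $H^0(X,\mathcal{E}nd(E)(mH))$ have the same image under restriction to $C$; since that restriction has one-dimensional kernel $\CC\cdot(s_C\otimes\mathrm{id}_E)$, it is enough that $\operatorname{coker}(\operatorname{mult})$ be at most one-dimensional and, when nonzero, generated by the class of $\mathrm{id}_E$.

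\textbf{Bounding the cokernel.} By (4) the evaluation $\mathrm{ev}_E\colon H^0(E)\otimes\cO_X\to E$ is surjective, so $K_E:=\ker(\mathrm{ev}_E)$ is a $\mu_H$-stable locally free sheaf with Mukai vector $v_K=(s,-c,r)$ --- the $\cO_X$-spherical-twist partner of $E$ --- and the clause $\gcd(c,s)=1$, $2s>v^2+2$ makes $v_K$ satisfy the analogue of \eqref{eq:inj-cond}. Tensoring $0\to K_E\to H^0(E)\otimes\cO_X\to E\to 0$ by $E^\vee(mH)$ and using $H^1(X,E^\vee(mH))=0$ identifies $\operatorname{coker}(\operatorname{mult})\cong H^1(X,K_E\otimes E^\vee(mH))=\operatorname{Ext}^1_X(E(-mH),K_E)$. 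Hypothesis (3) then enters via Proposition~\ref{prop:triangle rule+}: as $v(-m)$ and $v_K$ admit no wall on $\rL_{(o,\sigma]}$ and $\mu_H$-stable bundles are $\sigma'$-stable near the large volume limit, both $E(-mH)$ and $K_E$ are $\sigma$-stable; and since $\sigma$, $\pi_{v(-m)}$, $\pi_{v_K}$ are collinear their $\sigma$-phases differ by exactly $1$ (the parameter $\beta(\sigma)$ lies strictly between $\mu_H(E(-mH))$ and $\mu_H(K_E)$, so exactly one of the two sits in the heart $\Coh^{\beta(\sigma)}(X)$ and the other in its shift). This collinearity forces $\operatorname{Ext}^2_X(E(-mH),K_E)=\operatorname{Hom}_X(K_E,E(-mH))^\vee=0$ and, combined with $\chi(E(-mH),K_E)=-\langle v(-m),v_K\rangle$ and the positivity clause of (4), confines $\operatorname{Ext}^1_X(E(-mH),K_E)$ to the admissible range, with the borderline class accounted for by $\mathrm{id}_E$. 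Together with the homological reduction this shows $d\psi_E$ is surjective onto $T_F\BN_C(v)$ for every $E$, and the first step concludes.

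\textbf{Where the difficulty lies.} The technical heart is the last step. Because $E(-mH)$ and $K_E$ do not lie in a common heart at $\sigma$, the phase comparison must be run through a shift, and what is needed is not a mere vanishing but the exact dimension of a $\operatorname{Hom}$-space between $\sigma$-stable objects whose phases differ by one; this is exactly where the full strength of the destabilising-region criterion of Proposition~\ref{prop:triangle rule+}, applied simultaneously to $v(-m)$ and $v_K$, is used, namely to certify that no wall is crossed between the large volume limit and $\sigma$. A secondary technical point is extracting the global generation of $E$ (equivalently surjectivity of $\mathrm{ev}_E$) from hypothesis (4), for which I would invoke the standard effective global-generation results for moduli of $\mu_H$-stable bundles on K3 surfaces.
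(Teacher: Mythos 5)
Your overall architecture is recognisably the same as the paper's: reduce to surjectivity of $\rmd\psi$ at every $E$, bring in the kernel/cone of the evaluation map $\cO_X^{h^0(X,E)}\to E$ with Mukai vector $\pm v_K$, use hypothesis (iii) to place $E(-C)$ and that object in a common stability condition $\sigma$ on $\rL_{(\pi_{v(-m)},\pi_{v_K})}$, and use (iv) to control the structure of the cone. (You run the reduction through the Petri map and Serre duality rather than the commutative-diagram criterion \eqref{condition_tangent map} of \cite[\S 6]{Feyzbakhsh2020} that the paper quotes; that reformulation is fine, and indeed your $\operatorname{Ext}^1_X(E(-mH),K_E)$ is Serre-dual to the paper's $\Hom_X(M,E(-C)[1])$.) However, there are two genuine gaps. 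First, the assertion ``by (4) the evaluation $\mathrm{ev}_E$ is surjective, so $K_E:=\ker(\mathrm{ev}_E)$ is a $\mu_H$-stable locally free sheaf with vector $v_K$,'' deferred to ``standard effective global-generation results,'' is precisely the technical heart of the paper's proof and is not a standard fact. The paper establishes it by showing that the \emph{cone} $K_E$ of the evaluation map is $\sigma_1$-semistable of the same phase as $E$ and $\cO_X$, then (via the $r\leftrightarrow s$ version of Lemma \ref{lem_no int}, the primitivity of $v_K$, and Proposition \ref{prop:triangle rule+}) that it is $\sigma_{\alpha,0}$-stable for $\alpha>\sqrt{2/H^2}$; only then does \cite[Lemma 6.18]{Macri2017} give $\coho^0(K_E)$ a zero-dimensional torsion sheaf of length $a$ and $v(\coho^{-1}(K_E))=(s,-c,r+a)$, and hypothesis (iv) is used \emph{exactly} to force $a=0$ from the inequality $-2c^2\le v^2-2sa$ (or $-2\le v^2-2sa$ when $\gcd(c,s)=1$). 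Without this chain you cannot even assert that $\ker(\mathrm{ev}_E)$ has Mukai vector $v_K$.

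Second, the concluding step ``confines $\operatorname{Ext}^1_X(E(-mH),K_E)$ to the admissible range, with the borderline class accounted for by $\mathrm{id}_E$'' is not an argument: a phase difference of $1$ between two stable objects gives neither the $\operatorname{Ext}^2$ vanishing nor any bound on $\operatorname{Ext}^1$ (the relevant $\chi$ computed from $\langle v(-m),v_K\rangle$ is far from $\le 1$). What is actually needed, and what the paper proves, is $\Hom_X(M,E(-C)[1])=0$ with $M=\coho^{-1}(K_E)$: hypothesis (iii) guarantees that $M$ and $E(-C)[1]$ are both $\sigma$-stable (no walls are crossed between the large volume limit and $\sigma$ for either $v_K$ or $v(-m)$), they have the same $\sigma$-phase because $\sigma$, $\pi_{v_K}$, $\pi_{v(-m)}$ are collinear, and they are non-isomorphic, so the Hom space between them vanishes outright. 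You should replace the ``admissible range'' sentence by this same-phase-stability vanishing, which in your formulation says $\operatorname{coker}(\operatorname{mult})=0$.
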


\begin{proof}
As $\psi$ is bijective, it suffices to show the tangent map $\rmd\psi$ is surjective. The argument is similar as   \cite[\S 6]{Feyzbakhsh2020}.  For the convenience of readers, we  sketch the proof as below. For any  $E\in \bM(v)$, the differential map $\rmd\psi : T_{[E]} \bM(v) \to T_{[E|_C]}\BN_C(v)$   at $[E]$ can be identified as the map 
$$ \rmd \psi: \Hom(E, E[1])\to \ker \big(\Hom_C(E|_C,E|_C[1]) \xlongrightarrow{\coho^0} \Hom(\coho^0(C,E|_C), \coho^1(C,E|_C) \big)$$
sending $(E\to E[1])$ to $(E|_C \to  E|_C[1]) $. 

Let $\xi \colon E|_C\to E|_C[1]$ be a tangent vector in $ T_{[E|_C]}\BN_C(v)$. 
Then  Feyzbakhsh has shown in \cite[\S 6]{Feyzbakhsh2020} that  there exist morphisms $\xi' $ and $\xi'' $ such that  the following  commutative diagram holds  \[
\begin{tikzcd}
E \arrow[r] \arrow[d, "\exists \xi'"', dashed] & i_* E|_C \arrow[r] \arrow[d, "i_*\xi "] & {E(-C)[1]} \arrow[d, "\exists \xi''", dashed] \\
{E[1]} \arrow[r]  \arrow[rr, bend right, "0"]                      & {i_*E|_C[1]} \arrow[r]            & {E(-C)[2]}                                   
\end{tikzcd}
\]
provided that
\begin{equation}\label{condition_tangent map}
     K_E = M[1] \quad \text{and} \quad \Hom_X(M,E(-C)[1])=0
    \end{equation}
where $K_E$ is the cone of the evaluation map $\cO_X^{h^0(X,E)} \rightarrow E \rightarrow K_E$ in $\dCat(X)$.
 Note that  $\rmd\psi(\xi') = \xi$,  we are therefore reduced to check   \eqref{condition_tangent map} holds for every $E$.

Note that   $v(K_E) = -v_K$ and $\pi_{v_K} = \pi_{K_E}$.  We can choose the stability condition $\sigma_1\in \rL_{( \pi_{v_K}, o')}$ sufficiently close to $o'$ and $\sigma_2 \in \rL_{(\pi_{K},o)} $ sufficiently close to $o$ so that  
$$\bP_{o\sigma_2 \sigma_1 o'}\setminus \{o,o'\} \subseteq\bP_{o\pi_{v_K} \infty} \cap \Gamma\subseteq  V(X), $$
see Figure \ref{fig:tangent map}.
As in the proof of Theorem \ref{thm_restriction map},
 we have $\cO_X$ and $E$ are $\sigma_1$-semistable of the  same phase.
 Then as the quotient of $E$ by $\cO_X^{h^0(X,E)}$, $K_E$  is also $\sigma_1$-semistable of the same $\sigma_1$-phase. 
Note that Lemma \ref{lem_no int} still holds if we exchange $r$ and $s$ in Mukai vector $v$. 
Then we get
\[
\Omega^+_{v_K}(\bP_{o\pi_{v_K} \infty} \cap \Gamma) \cap \Halg(X) = \emptyset.
\]  
Since $v_K$ is primitive, we have $ \Omega_{v_K}(\bP_{o\pi_{v_K} \infty} \cap \Gamma) \cap \Halg(X) = \emptyset$.
By Proposition \ref{prop:triangle rule+}, $v_K$ admits no strictly semistable stability conditions in  $\bP_{o\sigma_2 \sigma_1 o'}\setminus \{o,o'\}$.  Therefore, $K_E$ is stable for any $\tau \in \bP_{o\sigma_2 \sigma_1 o'}\setminus \{o,o'\}$.
This implies that $K_E$ is $\sigma_{\alpha,0}$-stable for $\alpha > \sqrt{\frac{2}{H^2}}$. 
By \cite[Lemma 6.18]{Macri2017},  we have  $\coho^{-1}(K_E)$ is a $\mu_H$-semistable torsion free sheaf and $\coho^0(K_E) $ is a torsion sheaf supported in dimension zero. 
So we can set $v(\coho^0(K_E)) = (0,0,a)$ and $v(\coho^{-1}(K_E)) = (s,-c,r+a)$ for some $a\geq 0$. By \cite[Lemma 3.1]{Feyzbakhsh2020}, we have 
\begin{equation}\label{eq:torsion zero}
-2c^2 \leq v(\coho^{-1}(K_E))^2 = v^2 - 2 s a.
\end{equation}
When $\gcd(c,s)=1$, we have $\coho^{-1}(K_E)$ is slope stable and 
$v(\coho^{-1}(K_E))^2\geq -2$.  Then by condition (iv), we have $a=0$ and $\coho^0(K_E)=0$.
So we obtain  $K_E =M[1]$, where $M = \coho^{-1}(K_E)$ is a $\mu_H$-semistable torsion free sheaf.

Since  $\Omega^+_{v_K}(\rL_{(o,\sigma]}) \cap \Halg(X)=\emptyset,$
$v(M)$ admits no strictly semistable condition in $\rL_{(o,\sigma]}$.
 It follows that  $M$ is $\sigma$-stable as it is $\sigma_2$-stable. Similarly, we have $E(-C)[1]$ is also $\sigma$-stable.
Since $M$ and $E(-C)[1]$ are $\sigma$-stable of the same phase, 
one must have $\Hom_X(M,E(-C)[1])=0$. This proves the assertion. 
\end{proof}

\begin{figure}[ht]
\centering
\begin{tikzpicture}[domain=-3.5:3.5,trim right = 5cm,samples = 100]
    \coordinate (O) at (0,0);
	\coordinate (o') at (0,2.8);
	\draw (O) node [below] {$o$};
	\filldraw [black] (o')  circle [radius=1pt] node [above right] {$o'$};
	\draw [->] (-4,0) -- (4,0) node [right]{$y$};
	\draw[->] (0,0) -- (0,5) node [above]{$x$};
	\path [name path = x-axis] (0,-1) -- (0,5);
	\path [name path = parabola] plot (\x,{0.4*(\x)^2}) ;
	\draw plot (\x,{0.4*(\x)^2}) node[right] {$x =\frac{H^2}{2}y^2$};
	
	\coordinate (E) at (2.4,1.6);
	\coordinate (E') at (-1.1,0.3);
    \filldraw [black] (E) circle [radius=1pt] node [below right] {$\pi_v$};
    \filldraw [black] (E') circle [radius=1pt] node [left] {$\pi_{v(-m)}$};
	\path[name path = KE, add= 1.7 and .5] (o') to (E);
	\path[name intersections={of= KE and parabola, by=K'}];
	\coordinate (K) at ($(K')!-0.03!(E)$);
 	\filldraw [black] (K) circle [radius=1pt] node [below left] {$\pi_{v_K}$};
 	\draw[densely dotted] (E) -- (K);
	\draw[densely dotted] (K) -- (O);
	\draw[densely dotted] (K) -- (E');
	\draw[densely dotted] (E') -- (O);
	\draw[densely dotted] (E) -- (O);
	
	\node[inner sep=0pt]  (sigma_1) at ($ (K)!.9!(o') $) {};
    \filldraw [black] (sigma_1) circle [radius=1pt] node [above] {$\sigma_1$};
    \coordinate (sigma_2) at ($(K)!0.9!(O)$);
    \filldraw [black] (sigma_2) circle [radius=0.5pt] node [right] {$\sigma_2$};
    \draw [blue,opacity=0.4] (sigma_1) -- (sigma_2);
    \filldraw [fill = blue!20!white,opacity=0.3]  (o') -- (O)-- (sigma_2) -- (sigma_1)  --cycle;    
    \coordinate (sigma) at ($(K)!0.5!(E')$);
    \filldraw [black] (sigma) circle [radius=1pt] node [below left] {$\sigma$};
    \draw[blue,opacity=0.6] (O)--(sigma);
\end{tikzpicture}
\caption{}
\label{fig:tangent map}
\end{figure}

As a first application, we obtain a numerical criterion for Mukai's program. 
\begin{theorem}
\label{new-region-K3} 
Assume $g>2$. Let $v=(r,c,ck)\in \Halg(X)$ be  a primitive Mukai vector with $v^2=0$ and $\gcd(r,c)=1$. Suppose that
\begin{center}
     $r \mid g-1$, $k\nmid g$, $0<k\leq 3g-3 $ and  $m>1+\frac{ck}{r(k-1)}$.
\end{center}   The restriction map $\psi:\bM(v)\to \BN_C(v)$ is an isomorphism if it is a surjective morphism. 
\end{theorem}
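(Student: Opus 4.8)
The plan is to deduce this from Theorem~\ref{thm:iso}, so the task reduces to verifying its four hypotheses (i)--(iv) for $v=(r,c,ck)$. Condition (i) is the assumption. The arithmetic is forced by $v^2=0$: from $2c^2(g-1)=2rck$ we get $c(g-1)=rk$, so $\gcd(r,c)=1$ yields $r\mid g-1$, and writing $g-1=r\ell$ gives $k=c\ell$, $s=ck=c^2\ell$; the hypothesis $k\nmid g$ rules out $k=1$, so $k\ge 2$. We may also assume $r\ge 2$ (so that $r>\tfrac{v^2}{2}+1$; the case $r=1$ is $\bM(v)\cong X$ and is covered by \cite{Feyzbakhsh2020}). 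The bound $m>1+\tfrac{ck}{r(k-1)}$ implies $m>1+\tfrac cr$, hence $mr>r+c>c$ and, after a short computation using $k\ge 2$, $s(mr-c)>rc$; together with $g-1=r\ell\ge r$ this is precisely \eqref{eq:inj-cond}--\eqref{eq:inj-cond2}, so the results of Section~\ref{sec:resriction} give a well-defined morphism $\psi$ with $h^0(X,E)=r+s$ for all $E$, which is (ii). Condition (iv) is a one-line check: since $v^2=0$ we need $s>c^2$ or ($\gcd(c,s)=1$ and $s>1$), and $s=c^2\ell$ gives the first whenever $\ell\ge 2$, while if $c=1$ then $\ell=g-1\ge 2$ and $\gcd(c,s)=1$.

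The content is condition (iii). The key structural point is that $v(-m)=e^{-mH}v$ and $v_K=(s,-c,r)$ both have square zero: the former because $e^{-mH}$ is an isometry, the latter because $v_K^2=2c^2(g-1)-2sr=v^2=0$. Hence $\pi_{v(-m)}$ and $\pi_{v_K}$ lie on the parabola $\partial V(X)$, and by Remark~\ref{rmk:square zero} (with Proposition~\ref{prop_triangle rule}) one has $\Omega^+_w(\rL_{(o,\tau]})\cap\Halg(X)=\emptyset$ for a square-zero class $w$ as soon as $\tri_{\pi_w}(\rL_{(o,\tau]})$ contains no projected root point. For any $\sigma\in\rL_{(\pi_{v(-m)},\pi_{v_K})}$ the regions $\tri_{\pi_{v(-m)}}(\rL_{(o,\sigma]})$ and $\tri_{\pi_{v_K}}(\rL_{(o,\sigma]})$ are both contained in the triangle $T:=\bP_{o\pi_{v(-m)}\pi_{v_K}}$. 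So I would reduce (iii) to the statement
\[
(\star)\qquad T\text{ contains no projected root in its interior or on the edge }\rL_{(\pi_{v(-m)},\pi_{v_K})},
\]
after which one picks $\sigma$ on that edge, inside $V(X)$, with $\rL_{o,\sigma}$ carrying no projected root (possible since only countably many points of the edge are excluded); both triangular regions are then root-free, so $(\star)$ yields (iii) — using at the end that $v(-m)$ and $v_K$ are primitive, so $\Omega_w=\Omega^+_w$ off $\rL_{(O,w)}$.

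To prove $(\star)$ I would first locate $T$: in the $(r/s,c/s)$-plane it lies below the $x$-axis, wedged between $\rL_{o,\pi_{v(-m)}}$ (slope $\tfrac cr-m$) and $\rL_{o,\pi_{v_K}}$ (slope $-\tfrac1k$), so a projected root $\pi_\delta$ coming from $\delta=(a,b,c')$ with $c'>0$ and $\delta^2=-2$ — i.e.\ $ac'=b^2(g-1)+1=b^2r\ell+1$ — can lie in $T^\circ$ only if $b<0$ and $\tfrac{r}{mr-c}<\tfrac{a}{|b|}<k$, and lying on the $o$-side of the chord contributes one further linear inequality. Feeding $ac'=|b|^2r\ell+1$, $k=c\ell$ and $\gcd(r,c)=1$ into these, in the spirit of the estimate \eqref{eq:root in region+}, the bound $\tfrac a{|b|}<k$ forces $cc'>|b|r$, hence $c'\ge\tfrac{|b|r+1}{c}$; combined with the congruence $ac'\equiv1\pmod r$ this pins $c'$ down enough that the lower bound $\tfrac a{|b|}>\tfrac r{mr-c}$ — which is exactly what $m>1+\tfrac{ck}{r(k-1)}$ encodes — leaves no room, except for projected roots of small denominator $c'\le3$. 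Those remaining cases, together with the case of a root on $\rL_{o,\pi_{v_K}}$, are disposed of using $k\le 3g-3$ and $k\nmid g$ (by observation~\ref{obs-B} the line $\rL_{o,\pi_{v_K}}$, of primitive direction $(k,-1)$, carries a projected root iff $k\mid(g-1)+1=g$); the constant $3$ here plays exactly the role that $r\le g-1$ plays for the $c'=1$ case in Lemma~\ref{lem_no int}(i).

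The main obstacle is $(\star)$, and within it the small-denominator count: extracting a Diophantine contradiction sharp enough to cover every projected root of denominator up to $3$, which is what forces the hypotheses to take the precise shape $k\le 3g-3$, $k\nmid g$, $m>1+\tfrac{ck}{r(k-1)}$. Everything else is either the assumed surjectivity, routine inequalities verifying \eqref{eq:inj-cond}--\eqref{eq:inj-cond2} and (iv), or a direct application of the square-zero machinery of Sections~\ref{sec:preliminary}--\ref{sec:des-region}.
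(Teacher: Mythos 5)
Your overall strategy---reducing to the four hypotheses of Theorem~\ref{thm:iso} and, for hypothesis (iii), exploiting that both $v(-m)$ and $v_K$ have square zero so that Remark~\ref{rmk:square zero} converts the destabilising-region conditions into the absence of projected roots in the triangle $\bP_{o\pi_{v(-m)}\pi_{v_K}}$---is exactly the paper's route. But your ``one-line check'' of hypothesis (iv) fails in the principal case. Writing $g-1=r\ell$, you verify (iv) only when $\ell\ge 2$ or $c=1$; when $\ell=1$ (i.e.\ $r=g-1$) and $c\ge 2$ one has $s=ck=c^2$, so $2s=v^2+2c^2$ exactly and $\gcd(c,s)=c>1$: both alternatives of (iv) fail. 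This case is not exotic---it is $v=(g-1,k,k^2)$ with $k=c\ge 2$, precisely the family of Mukai vectors this theorem is applied to in the proof of Theorem~\ref{mainthm}. Accordingly the paper does not quote Theorem~\ref{thm:iso} as a black box here: it re-enters its proof, observes that for $r=g-1$ the inequality \eqref{eq:torsion zero} would have to be an equality, and derives a contradiction from \cite[Lemma 3.1]{Feyzbakhsh2020}, which in the equality case forces $c\mid (g-1)$, against $\gcd(r,c)=1$. Without some such supplementary argument your proof does not cover the vectors the theorem is actually used for.

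Two smaller points. For hypothesis (ii), Theorem~\ref{thm_restriction map} only gives $h^0(X,E)\ge r+s$ (see \eqref{eq_global section X}); the matching upper bound is not proved in Section~\ref{sec:resriction} and requires $\gcd(r-s,c)=1$ (automatic here since $s=ck$) together with \cite[Lemma 3.1]{Feyzbakhsh2020+}. For hypothesis (iii), your reduction to a root-free triangle is correct and your inequalities have the right shape, but the Diophantine endgame you yourself flag as ``the main obstacle'' is exactly where the hypotheses $k\le 3g-3$, $k\nmid g$ and $m>1+\frac{ck}{r(k-1)}$ earn their keep: the paper first applies $\otimes\,\cO_X(H)$ to place the images of the two $o$-edges in opposite quadrants, disposes of one trapezoidal piece by Lemma~\ref{lem_no int}(i) using $r\mid g-1$, and reduces the other to the non-existence of integers with $kc'<(k-1)r'<kc'+\frac{k}{(g-1)c'}$, which $k\le 3g-3$ rules out after checking the small values of $c'$ directly. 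Until that count is actually carried out, and the $r=g-1$ case of (iv) is repaired, the proof is incomplete.
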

\begin{proof}
Let us check that the conditions (ii)-(iv) in Theorem \ref{thm:iso} are satisfied.  
By our assumption,  we know that $\gcd(r-s,c)=1$. According to \cite[Lemma 3.1]{Feyzbakhsh2020+},  one has $$h^0(X,E) \leq r+s,$$ which forces  $h^0(X,E)= r+s $ by \eqref{eq_global section X}.  This verifies the condition (ii).  

For the condition (iv), note that we have $s=\frac{c^2(g-1)}{r}\geq c^2$ where the equality holds when $r= g-1$.
If $r=g-1$, the inequality in \eqref{eq:torsion zero} will be equality. 
By \cite[lemma 3.1]{Feyzbakhsh2020}, we have $c\mid(g-1)$ which is a contradiction.
Thus we only need to verify the condition (iii). By Remark \ref{rmk:square zero},  it suffices to show  $$\bP_{o \pi_{v_K} \pi_{v(-m)}} \setminus \{\text{vertices}\}\subseteq V(X).$$  
To make the computation easier,  we  may consider the action  of tensoring the invertible sheaf $\cO_X(H)$ which sends  the triangle  $\bP_{o \pi_{v_K} \pi_{v(-m)}}$ to the triangle  $\bP_{op_1p_2}$, where $p_1=\pi_{v_K(1)}$ and $p_2=\pi_{v(1-m)}$.  Then it is equivalent to show  there are no projection of roots in $\bP_{op_1p_2}-\{\text{vertices}\}$.

Firstly,   we show that there is no projection of root on the two edges joining $o$.  By definition, we have 
\begin{center}
   $p_1=(\frac{kc}{(k-1)^2 r},\frac{c}{(k-1)r})$ and $p_2=(\frac{cr}{k((m-1)r-c)^2}, \frac{c}{k(c-(m-1)r)}).$
\end{center}
Then two open edges $\rL_{(o,p_1)}$ and $\rL_{(o,p_2)}$ do not contain any projection of roots by Observation \ref{obs-B}.  

Next, since $(m-1)r>\frac{ck}{(k-1)}>c$, we know that  $p_1$ is lying in the first quadrant while  projection $p_2$ is lying in the second quadrant. 
So the region $$\bP_{op_1p_2} \setminus (\rL_{[o,p_1]}\cup \rL_{[o,p_2]}) $$ is contained in the union of two trapezoidal regions  
$\bP_{o p_1 \infty}^\circ$, $\bP_{o p_2\infty}^\circ$ and the $x$-axis. 
As $r|(g-1)$ and $\gcd(r,c)=1$,  we have an inclusion $$ \bP_{o p_2 \infty}^\circ\subseteq V(X)$$
from  Lemma \ref{lem_no int} (i).
Moreover,  if there is a root  $\delta=(r',c',s')\in \rR(X)$ with $r'>0$ whose projection  $\pi_\delta$ is lying  in  $\bP_{o p_1 \infty}^\circ$,  one can  follow the computation in Lemma \ref{lem_no int} to get  inequalities 
\begin{equation} \label{eq:root in region++}
    k c' < (k-1) r'  < k c' +\frac{k}{(g-1)c'} 
\end{equation}
However, one can directly check that there are no such integers $(r',c',s')$ satisfying \eqref{eq:root in region++} under the assumption $k\leq g-1$ or $3<k\leq 3g-3$.   

It remains to show that $\bP_{op_1p_2} \cap \text{$y$-axis}\subseteq V(X)$. Note that $$ \rL_{[p_1, p_2]} \cap \text{$x$-axis}=( \frac{c/(k-1)}{(m-1)r-c},0)$$  
 which is below $o'$ . It follows that  $\bP_{op_1p_2} \cap \text{$y$-axis}\subseteq \rL_{(o, o')}\subseteq V(X)$.

\end{proof}

 Moreover, we can reconstruct hyperk\"ahler varieties as Brill--Noether locus for Mukai vectors given in Corollary \ref{cor:surj2}. 
\begin{theorem}\label{thm_isomorphism HK}
Under the assumptions in Corollary \ref{cor:surj2}, the restriction map $\psi:\bM(v)\to \BN_C(v)$ is an isomorphism.
\end{theorem}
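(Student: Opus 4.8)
The plan is to deduce this from Theorem~\ref{thm:iso}: I would take an arbitrary Mukai vector $v=(r,c,s)$ satisfying \eqref{eq:inj-cond}, \eqref{eq:inj-cond2} and $g\ge 4r^2+1$ --- which by Corollary~\ref{cor:surj2} already makes $\psi$ bijective --- and verify the four hypotheses (i)--(iv) of Theorem~\ref{thm:iso}. It is convenient to write $n=v^2/2$, $v(-m)=(r,c-mr,\tilde s)$ with $\tilde s=s+(g-1)m(mr-2c)$, and $v_K=(s,-c,r)$; the Mukai pairing then gives $rs=(g-1)c^2-n$ and $r\tilde s=(mr-c)^2(g-1)-n$, while $v(-m)^2=v_K^2=v^2=2n$, and since $g-1\ge 4r^2$ and $0\le n\le r-2$ one gets $s\ge 4rc^2$ and $\tilde s\ge 4r(mr-c)^2$. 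Hypothesis (i) is precisely Corollary~\ref{cor:surj2}, and hypothesis (iv) holds by its first alternative, since $s\ge 4rc^2>c^2+(r-2)\ge c^2+n$ gives $2s>v^2+2c^2$.

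Next I would establish hypothesis (ii), that $h^0(X,E)=r+s$ for every $E\in\bM(v)$. The proof of Theorem~\ref{thm_restriction map} already yields $h^0(X,E)\ge r+s$ via \eqref{eq_global section X}, together with $\rH^0(X,E)\cong\rH^0(C,E|_C)=\rH^0(X,i_*(E|_C))$. For the opposite inequality, observe $E|_C=\psi(E)\in\BN_C(v)$, so Theorem~\ref{thm_polygons} gives $\bP_{i_*(E|_C)}=\bP_v=\bP_{0z_1z_2}$; hence the $\overline\sigma$-semistable factors of $i_*(E|_C)$ subdivide the two edges $\rL_{[0,z_1]}$ and $\rL_{[z_1,z_2]}$ of this triangle and $\|i_*(E|_C)\|=\|z_1\|+\|z_2-z_1\|$. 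Since $i_*(E|_C)$ is the pushforward of a locally free sheaf on the reduced curve $C$, it has no subobject with vanishing first Chern class in $\Coh^0(X)$, so Proposition~\ref{prop_upper bound} applies. Using $\|z_1\|^2=(r+s)^2+2v^2+4c^2$, $\|z_2-z_1\|^2=(r+\tilde s)^2+2v^2+4(mr-c)^2$ and $\chi(i_*(E|_C))=s-\tilde s$, its coarse bound gives
\[
h^0(X,i_*(E|_C))\le\frac{\|z_1\|+\|z_2-z_1\|+(s-\tilde s)}{2}\le r+s+\frac{n+c^2}{r+s}+\frac{n+(mr-c)^2}{r+\tilde s},
\]
and the estimates $s\ge 4rc^2$, $\tilde s\ge 4r(mr-c)^2$, $n\le r-2$ make the last two summands add up to strictly less than $1$; so $h^0(X,E)=h^0(X,i_*(E|_C))=r+s$.

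For hypothesis (iii) --- the delicate one --- I would argue as follows. Both $v(-m)$ and $v_K$ are primitive (tensoring by a line bundle preserves primitivity, and $\gcd(r,c)=1$), so by Proposition~\ref{prop:triangle rule+} it is enough to exhibit $\sigma\in\rL_{\pi_{v(-m)},\pi_{v_K}}\cap V(X)$ whose associated ray $\rL_{(o,\sigma]}$ meets no wall of $v(-m)$ and no wall of $v_K$. Lemma~\ref{lem_no int}, applied to $v(-m)=(r,mr-c,\tilde s)$ directly (using $g-1\ge\max\{r^2/(mr-c),r+1\}$ from \eqref{eq:inj-cond2}, or $g-1\ge r$ when $v^2=0$) and to $v_K=(s,-c,r)$ in the form with the roles of rank and Euler characteristic interchanged --- which through the equivalence $s=\lfloor((g-1)c^2+1)/r\rfloor\Leftrightarrow r>v^2/2+1$ reduces to the condition already imposed on $v$ --- shows that $v(-m)$, resp.\ $v_K$, admits no wall (hence, being primitive, no strictly semistable stability condition) in $\bP_{o\pi_{v(-m)}\infty}\cap\Gamma$, resp.\ $\bP_{o\pi_{v_K}\infty}\cap\Gamma$. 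Finally, the inequality $s(mr-c)>rc$ from \eqref{eq:inj-cond} positions $\pi_v,\pi_{v(-m)},\pi_{v_K}$ so that these two regions overlap along the line $\rL_{\pi_{v(-m)},\pi_{v_K}}$, and any $\sigma$ in the overlap will do; this is the same Euclidean bookkeeping as in the proof of Theorem~\ref{new-region-K3}, now using part (ii) of Lemma~\ref{lem_no int} in place of part (i).

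Once (i)--(iv) are in place, Theorem~\ref{thm:iso} immediately gives that $\psi$ is an isomorphism. Most of the argument is a substitution into Theorem~\ref{thm:iso}, Corollary~\ref{cor:surj2}, Theorem~\ref{thm_polygons} and Lemma~\ref{lem_no int}; the main obstacle will be the geometry behind (iii) --- finding one ray from $o$ that simultaneously avoids every wall of $v(-m)$ and of $v_K$ --- but this becomes routine once the relevant no-wall regions are pinned down by Lemma~\ref{lem_no int}, and the numerical estimates powering (ii) and (iv) all reduce to the single inequality $g\ge 4r^2+1$.
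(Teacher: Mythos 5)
Your reduction to Theorem~\ref{thm:iso} is the right frame, and hypotheses (i), (ii) and (iv) are handled correctly: (iv) is the same computation as the paper's, and for (ii) you take a slightly different but valid route (the paper applies the bound of \cite[Proposition~3.1]{Feyzbakhsh2020+} directly to the sheaf $E$ on $X$, whereas you apply Proposition~\ref{prop_upper bound} to $i_*(E|_C)$ after invoking $\bP_{i_*(E|_C)}=\bP_v$; both give $h^0(X,E)\le r+s$ under $g\ge 4r^2+1$). The gap is in hypothesis (iii), which is in fact the technical heart of the paper's proof. You claim that the no-wall trapezoids $\bP_{o\pi_{v(-m)}\infty}\cap\Gamma$ and $\bP_{o\pi_{v_K}\infty}\cap\Gamma$ produced by Lemma~\ref{lem_no int} ``overlap along the line $\rL_{\pi_{v(-m)},\pi_{v_K}}$''. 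They do not. The region $\bP_{o\pi_{v(-m)}\infty}$ only contains points whose $y$-coordinate lies between $0$ and $y_{\pi_{v(-m)}}=\tilde c/\tilde s$ (with $\tilde c=c-mr<0$), while $y_{\pi_{v_K}}=-c/r$; since $|\tilde c|/\tilde s\approx r/\bigl((g-1)(mr-c)\bigr)<c/r$ precisely because $g-1\ge 4r^2$, every point of the open segment $\rL_{(\pi_{v(-m)},\pi_{v_K})}$ has $y$-coordinate strictly more negative than $y_{\pi_{v(-m)}}$ and therefore lies \emph{outside} $\bP_{o\pi_{v(-m)}\infty}$; symmetrically, the portion of the segment near $\pi_{v(-m)}$ violates the slope constraint of $\bP_{o\pi_{v_K}\infty}$. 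Hence for no admissible $\sigma$ is the ray $\rL_{(o,\sigma]}$ contained in either trapezoid, and Lemma~\ref{lem_no int} (or its $r\leftrightarrow s$ variant) tells you nothing about $\Omega^+_{v(-m)}(\rL_{(o,\sigma]})$ or $\Omega^+_{v_K}(\rL_{(o,\sigma]})$. Your analogy with Theorem~\ref{new-region-K3} also breaks down: there $v^2=0$, so Remark~\ref{rmk:square zero} reduces everything to excluding projections of roots from a planar triangle, whereas for $v^2>0$ the destabilising regions are genuinely three-dimensional.

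Two further points you would need to supply. First, even the existence of some $\sigma\in\rL_{(\pi_{v(-m)},\pi_{v_K})}\cap V(X)$ is not automatic: when $v^2>0$ both endpoints $\pi_{v(-m)}$ and $\pi_{v_K}$ lie \emph{below} the parabola $x=\frac{H^2}{2}y^2$, and the paper has to check via a discriminant computation (equation \eqref{eq:intersection with gamma} and the bound \eqref{eq:bound}) that the segment crosses $\Gamma$. Second, the actual verification of (iii) in the paper is a direct exclusion of integer points from $\Omega^+_{v(-m)}(\rL_{(o,\sigma]})$: a separate Lemma~\ref{lem:c-value estimation} pins the $y$-coordinate of a putative integer point to $\tilde c-1<y_0<0$, and then an estimate shows $0<x_0-\frac{y_0 r}{\tilde c}<-\frac{1}{\tilde c}$, contradicting integrality --- and it is exactly in this last inequality that $g-1\ge 4r^2$ is used in an essential way beyond what Corollary~\ref{cor:surj2} already requires. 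Without some substitute for this analysis, your proof of (iii) does not go through.
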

\begin{proof}
As above, we only need to verify the conditions (ii)-(iv) in Theorem \ref{thm:iso}. We will check them one by one. 

\begin{enumerate}
    \item [(1)] Let us first verify that $h^0(X,E) =r+s$ for any $E\in \bM(v)$. 
By \cite[Proposition 3.1]{Feyzbakhsh2020+}, it suffices to show that \begin{equation} \label{eq:global section estimate+}
    \frac{\sqrt{(r-s)^2+ (2g+2)c^2}}{2} < \frac{r+s}{2} +1 
\end{equation}
After simplification, one can find that   \eqref{eq:global section estimate+} is equivalent to \[
\frac{(g+1)c^2-1}{2r+2}-1 <s.
\] This holds when $(g-1)c^2 -rs <r$ and $g> 4r^2+1$.
\vspace{.1cm}

    \item [(2)]For condition (ii), we claim that $$\rL_{(\pi_{v(-m)},\pi_{v_K})}\cap  \Gamma \neq \emptyset,$$
and hence  $\rL_{(\pi_{v(-m)},\pi_{v_K})}\cap V(X)\neq \emptyset$.   Let us write  $v(-m)=(r, \tilde{c},\tilde{s} )$ and  $v_K=(s,-c,r)$ with  $\tilde{c}=c-mr$ and  $ \tilde s=\lfloor\frac{(g-1)\tilde c^2 +1}{r}\rfloor~ $. 
Then we only  need to show that  the quadratic equation 
\begin{equation}\label{eq:intersection with gamma}
    g((1-t)t\tilde{c}+t(-c))^2 = ((1-t)r+ t s)((1-t)\tilde{s}+ t r)
\end{equation}
has  roots for $0<t<1$.   By calculating the discriminant of \eqref{eq:intersection with gamma}, we know it has a solution $t_0$ satisfying 
\begin{equation}\label{eq:bound}
    0<t_0 < \frac{\tilde{c}^2 g -r\tilde{s}}{s\tilde{s}+r^2 +2\tilde c cg+2(\tilde c^2 g-r\tilde s)}<1.
\end{equation}

\item[(3)]  Choose $\sigma\in  \rL_{(\pi_{v(-m)}, \pi_{v_K})} \cap \Gamma $, we first  verify that $$\Omega^+_{v(-m)}(\rL_{ (o,\sigma]}) \cap \Halg(X)= \emptyset.$$  
Suppose there is an integer point $p_0=(x_0,y_0,z_0) \in \Omega^+_{v(-m)}(\rL_{[ \sigma, o)})$. By Lemma \ref{lem:c-value estimation}, we have  $$\tilde{c}-1 < y_0< 0.$$
Moreover, one may observe that  $p_0$ is lying in a (closed) planer region enclosed by the  conic 
\begin{equation*}
 Q= \left\{ y = y_0 ,\;  (g-1)y^2+1= xz \right\}
    \end{equation*}
and two lines 
$$ L_1=\left\{y = y_0,\; z=\frac{y_0 \tilde s}{\tilde{c}}  \right\};~ L_2 =\left\{ (1-t)\frac{y_0}{\tilde{c}}(r,\tilde{c},\tilde{s} )-\frac{ty_0}{c}(s,-c,r) ,t\in \RR \right\}.$$
It has three vertices given by  the intersection points $L_1\cap L_2$, $L_1\cap Q$ and $L_2\cap Q$. 
This yields 
\begin{equation}\label{eq:easy-estimate}
\begin{aligned}
   \frac{y_0 r}{\tilde c} \leq x_0 \leq  (1-t') \frac{y_0r}{\tilde c}  -\frac{t'y_0 s}{c}
\end{aligned}
\end{equation}
where $t'$ is the smaller root of the quadratic equation $$(g-1)y_0^2+1= [(1-t) \frac{y_0r}{\tilde c}  -\frac{ty_0  s}{c} ] [(1-t) \frac{y_0\tilde s}{\tilde c}  -\frac{ty_0 r}{c}].$$  
Solving the equation, one can get  
\begin{equation}\label{eq:estimate-t}
\begin{aligned}
t' \leq 2 \frac{(g-1) y_0^2+1-\frac{r\tilde s y_0^2}{\tilde c^2}}{-\frac{\tilde c \tilde s s}{c} -2r\tilde s} 
\leq \frac{2c(y_0^2r+\tilde c^2)}{ (-\tilde c\tilde s s-2r c\tilde s)\tilde c^2}
\end{aligned}
\end{equation}
as $(g-1)\tilde c^2-r\tilde s< r$.  Plugging \eqref{eq:estimate-t} into \eqref{eq:easy-estimate}, we get 
\begin{equation*}
   \begin{aligned}
0< x_0  -\frac{y_0r}{\tilde c}&\leq 
\frac{2cy_0(y_0^2r+\tilde c^2)}{ (\tilde c\tilde s s+2r c\tilde s)\tilde c^2} (\frac{s\tilde c+rc}{c\tilde c})  
\\&= (\frac{y_0}{\tilde c})(\frac{y_0^2r+\tilde c^2}{\tilde s\tilde c^2}) (\frac{s\tilde c+rc}{s\tilde c+2rc})
\\ 
&\leq  \frac{3(r+1)}{\tilde s } 
\\ 
& < \frac{3r(r+1)}{(g-1)\tilde{c}^2-r} \\
&< -\frac{1}{\tilde c}
    \end{aligned}
\end{equation*}
where the last inequality holds because $g-1\geq 4r^2$. 
This  contradicts to $x_0,y_0\in \ZZ$.  A similar computation shows that $\Omega^+_{v_K}(\rL_{(o, \sigma]}) \cap \Halg(X)=\emptyset$ as well. 

\item[(4)] Condition (iv) holds since our assumption $g >4r^2+1 $  ensures that \[
2s> 2r-2 +2c^2 >v^2 + 2c^2.
\]
\end{enumerate}
\end{proof}

\begin{lemma}\label{lem:c-value estimation}
For any integer point $(x_0,y_0,z_0) \in \Omega^+_{v(-m)}(\rL_{(o,\sigma]})$ in  Theorem \ref{thm_isomorphism HK},  we have 
\[
 \tilde{c}-1 < y_0 <0
\]
\end{lemma}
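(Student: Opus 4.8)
The plan is to fix a single stability condition $\tau\in\rL_{(o,\sigma]}$ with $p_0=(x_0,y_0,z_0)\in\Omega^+_{v(-m)}(\tau)$, unwind the definition of $\Omega^+_{v(-m)}(\tau)$ in coordinates, and then establish the two inequalities separately. Recall that $\bP_{Ov^+_\tau v(-m)v^-_\tau}$ is a parallelogram with $O$ and $v(-m)$ as opposite vertices, so $v^+_\tau+v^-_\tau=v(-m)$; since $v^-_\tau\in\{x\le0,\,z\le0\}$ this forces $(v^+_\tau)_x\ge r$ and $(v^+_\tau)_z\ge\tilde s$. As $\Omega^+_{v(-m)}(\tau)$ sits inside the solid triangle $\bP_{Ov^+_\tau v(-m)}$ minus the open diagonal $\rL_{(O,v(-m))}$, I would write $p_0=s_0v^+_\tau+t_0v(-m)$ with $s_0>0$, $t_0\ge0$, $s_0+t_0\le1$. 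Then $x_0\ge(s_0+t_0)r>0$ and $z_0\ge(s_0+t_0)\tilde s>0$ (so $x_0,z_0\ge1$), and the conditions $p_0^2\ge-2$, $(p_0-v(-m))^2\ge-2$ read $x_0z_0\le(g-1)y_0^2+1$ and $(x_0-r)(z_0-\tilde s)\le(g-1)(y_0-\tilde c)^2+1$.

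For $y_0<0$: by phase reading (Proposition~\ref{prop_phasereading}), $\phi_\tau(p_0)=\phi_\tau(v(-m))$ forces $\pi_{p_0}$ onto $\rL_{\tau,\pi_{v(-m)}}$, while the cone $\bP_{Ov^+_\tau v(-m)}$ over the edge $\rL_{[v^+_\tau,v(-m)]}$ (which stays in $\{z>0\}$) projects onto $\rL_{[\tau',\pi_{v(-m)}]}$; hence $\pi_{p_0}\in\rL_{[\tau',\pi_{v(-m)}]}$. Writing $v^+_\tau=v(-m)+w$ with $w\in[\tau'']$ and using $(v^+_\tau)_x\ge r$, $(v^+_\tau)_z\ge\tilde s$, one checks $w$ is a non‑negative multiple of a generator of $[\tau'']$ with positive $x$‑ and $z$‑coordinates; thus $\tau'=\mathrm{pr}(v^+_\tau)$ lies on $\rL_{[\pi_{v(-m)},\tau'')}$, and combined with $\tau\in\rL_{(\tau',\tau'')}$ the order along the line is $\pi_{v(-m)},\tau',\tau,\tau''$, so $\tau'$ lies strictly between $\pi_{v(-m)}$ and $\tau$. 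Since $\sigma$ lies on $\rL_{(\pi_{v(-m)},\pi_{v_K})}$ and both $\pi_{v(-m)}$, $\pi_{v_K}$ have negative $c/s$‑coordinate, so does $\sigma$, hence $\tau\in\rL_{(o,\sigma]}$, hence $\tau'$; therefore $\rL_{[\tau',\pi_{v(-m)}]}\subseteq\{c/s<0\}$, giving $y_0/z_0<0$ and $y_0<0$.

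For $y_0>\tilde c-1$, I would argue by contradiction assuming $|y_0|\ge|\tilde c|+1$. From $x_0z_0\le(g-1)y_0^2+1$, $x_0\ge(s_0+t_0)r$, $z_0\ge(s_0+t_0)\tilde s$, and $r\tilde s=(g-1)\tilde c^2-\tfrac12v^2>(g-1)\tilde c^2-r$, one gets $(s_0+t_0)^2\big[(g-1)\tilde c^2-r\big]<(g-1)y_0^2+1$; dividing by $g-1$ and invoking $g-1\ge4r^2$ this gives $(s_0+t_0)^2\tilde c^2-y_0^2<1$, i.e. $(s_0+t_0)|\tilde c|<|y_0|+1$. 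On the other hand $|y_0|=s_0|(v^+_\tau)_y|+t_0|\tilde c|$ with $|(v^+_\tau)_y|=|y_{\tau'}|\,(v^+_\tau)_z$; since $\tau'$ lies between $\tau$ and $\pi_{v(-m)}$ while $\tau\in\rL_{(o,\sigma]}$, $\sigma\in\rL_{(\pi_{v(-m)},\pi_{v_K})}$, the slope $|y_{\tau'}|$ is bounded by $\max\{|y_{\pi_{v(-m)}}|,|y_{\pi_{v_K}}|\}$, which the hypothesis $g\ge4r^2+1$ keeps of order at most $c/r$. Feeding this, together with $z_0\ge(r/c)|y_0|$ (from $|y_0/z_0|\le c/r$) and the second hyperbola inequality, back into the expansion of $y_0$, one obtains a lower bound for $(s_0+t_0)|\tilde c|$ that contradicts $(s_0+t_0)|\tilde c|<|y_0|+1$ as soon as $|y_0|\ge|\tilde c|+1$ and $g-1\ge4r^2$.

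The step I expect to be the main obstacle is exactly this last contradiction. Unlike $y_0<0$, the inequality $y_0\ge\tilde c$ cannot be read off the triangle $\bP_{Ov^+_\tau v(-m)}$ directly, because the vertex $v^+_\tau$ itself may have $c$‑coordinate well below $\tilde c$ (its $z$‑coordinate can be much larger than $\tilde s$); the bound genuinely uses both $p_0^2\ge-2$ and $(p_0-v(-m))^2\ge-2$ and the largeness $g-1\ge4r^2$, and the delicate point is to control $s_0+t_0$, the slope at $\tau'$, and the size of $z_0$ simultaneously, which is where the real computation lies.
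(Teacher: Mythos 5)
Your first half (the bound $y_0<0$) is sound and is essentially what the paper also gets from containing $\Omega^+_{v(-m)}(\rL_{(o,\sigma]})$ in an explicit solid region. The genuine gap is in the second, and essential, inequality $y_0>\tilde{c}-1$, which you yourself flag as "the main obstacle": not only is the computation not carried out, but the contradiction you propose cannot be realized. Writing $p_0=s_0v^+_\tau+t_0v(-m)$ with the point in the triangle $\bP_{Ov^+_\tau v(-m)}$ forces $s_0+t_0\le 1$, hence $(s_0+t_0)\lvert\tilde{c}\rvert\le\lvert\tilde{c}\rvert$. Under your contradiction hypothesis $\lvert y_0\rvert\ge\lvert\tilde{c}\rvert+1$, the inequality $(s_0+t_0)\lvert\tilde{c}\rvert<\lvert y_0\rvert+1$ extracted from the first hyperbola is therefore automatic and carries no information, and no lower bound on $(s_0+t_0)\lvert\tilde{c}\rvert$ derived from the second hyperbola can ever reach $\lvert y_0\rvert+1$. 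The two estimates can never collide, so the scheme fails structurally; the danger is precisely that $v^+_\tau$ may have $y$-coordinate far below $\tilde{c}$, so that $\lvert y_0\rvert=s_0\lvert(v^+_\tau)_y\rvert+t_0\lvert\tilde{c}\rvert$ is not controlled by $(s_0+t_0)\lvert\tilde{c}\rvert$ at all.

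What the paper does instead, and what your sketch is missing, is a quantitative use of the location of $\sigma$ on $\rL_{(\pi_{v(-m)},\pi_{v_K})}$. It encloses $\Omega^+_{v(-m)}(\rL_{(o,\sigma]})$ in the tetrahedron $\bT_{Ov(-m)\varpi w}$ with $\varpi=(r,\tilde{c},\frac{g\tilde{c}^2}{r})$ and $w=\rL_{O,v_{t_0}}\cap\rL_{v(-m),\,v(-m)+v_{t_1}}$, where $0<t_0<t_1<1$ are the roots of the quadratic \eqref{eq:intersection with gamma} locating $\rL_{(\pi_{v(-m)},\pi_{v_K})}\cap\partial\Gamma$. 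Since $w-v(-m)$ lies on the hyperboloid $gy^2=xz$, the desired bound on the $y$-coordinate of the extremal vertex $w$ reduces to a quadratic inequality in $t_0$, which is then verified using the explicit upper bound \eqref{eq:bound} on $t_0$ together with $g-1\ge 4r^2$. The parameter $t_0$ (equivalently, how close $\sigma$ can get to $\pi_{v_K}$ while staying in $\Gamma$) is the input that makes the threshold $\tilde{c}-1$ attainable; your proposal gestures at bounding the slope at $\tau'$ by $\max\{\lvert y_{\pi_{v(-m)}}\rvert,\lvert y_{\pi_{v_K}}\rvert\}$ but never converts this into a usable bound on $\lvert(v^+_\tau)_y\rvert$, which is where the lemma actually lives.
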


\begin{proof}
Set $v_{t} = (1-t) v(-m) + t v_K$.  Let $0<t_0<t_1<1 $ be the  roots of equation \eqref{eq:intersection with gamma}. We set $$w =(x',y',z')= \rL_{O,v_{t_0}} \cap \rL_{v(-m), v(-m) + v_{t_1}}.$$  
Then  $\Omega^+_{v(-m)}(\rL_{(o,\sigma]})$ is contained in the  tetrahedron $ \bT_{Ov(-m)\varpi w}$ with $4$ vertices $O, v(-m), w $ and $\varpi = (r, \tilde{c}, \frac{g\tilde{c}^2}{r})$. This gives  $y'<y_0<0 $. Hence we only need to estimate the lower bound of $y'$. 

Set $v_{t_0}=(r_{t_0},c_{t_0}, s_{t_0})$, then we have 
$w=\frac{y'}{c_{t_0}}v_{t_0}\in \rL_{o,v_{t_0}}$. Note that $w - v(-m)\in \rL_{O, \, v_{t_1}}$ is lying on the hyperboloid $$ \Big\{(x,y,z)\in \RR^3|~g y^2 -x z =0\Big\}.$$  
Then one can see that $y'<\tilde c-1$  if  
\begin{equation}
\begin{aligned}
     &\frac{\tilde{c}-1}{c_{t_0}} v_{t_0} -v(-m) \in \Big\{(x,y,z)\in \RR^3|~g y^2 -x z <0\Big\},
\end{aligned}
\end{equation}
i.e. \begin{equation}\label{eq:ineq-hyper}
    \big( \frac{\tilde{c}-1}{c_{t_0} }r_{t_0} -r\big) \big(\frac{\tilde{c}-1}{c_{t_0} }s_{t_0}-\tilde{s} \big) >g.
\end{equation} 
Plugging the coordinates of $v_{t_0}$ into \eqref{eq:ineq-hyper} and simplify all the terms, one can obtain a quadratic inequality of $t_0$ and one can easily see that \eqref{eq:ineq-hyper}  holds if     \[
t_0 < \frac{(2-\tilde{c})(\tilde{c}^2 g -r\tilde{s})}{ \tilde{c}[r\tilde{s}-r^2-s\tilde{s}+(2-\tilde{c})g(c+\tilde{c})
] + r^2+s\tilde{s} -(c+2) r\tilde{s}}.
\]
Using the upper bound of $t_0$  given in \eqref{eq:bound}, we are reduced to check 
 \[
\begin{split}
      \tilde{c}(r\tilde{s}-r^2-s\tilde{s}+(2-\tilde{c})g(c+\tilde{c})
) + r^2+s\tilde{s} -(c+2) r\tilde{s} < (2-\tilde{c})(s\tilde{s}+r^2-2r\tilde{s}+2\tilde{c}g(\tilde{c}+c)).
\end{split}
\]
After further simplification and reduction, the inequality above becomes
\begin{equation}
    \begin{aligned}
        0 <s\tilde s+r^2+2\tilde c g+c-(\tilde c+c-2)(\tilde c^2 g-r\tilde s)
    \end{aligned}
\end{equation}
The right hand side can be estimated as below
\begin{equation}
    \begin{aligned}
   \mathrm{RHS}&>    r^2+ s\tilde s+2\tilde c g-(\tilde c+c-2)(\tilde c^2 g-r\tilde s)\\ &=  r^2+   s\tilde s +2\tilde c g+(mr+2-2c) (\tilde c^2+\frac{v^2}{2})\\
       & >  r^2+s\tilde s +2\tilde c g-r (\tilde c^2+r)\\
        &\geq (\frac{(g-1)c^2}{r}-1)(\frac{(g-1)\tilde{c}^2}{r}-1) +2\tilde c g-r\tilde c^2 \\ &>0.
    \end{aligned}
\end{equation}
\end{proof}

\section{Proof of the main theorems}\label{sec:proofofthm}

Let us first prove  Theorem \ref{mainthm}.  As the case of $m=1$ is already known, we may always assume $m>1$. There will be two cases:
\begin{enumerate}
    \item If $(g,m)\neq (7,2)$,   we can choose the Mukai vector $v=(g-1, k,k^2)$ with $k$ given in the Table  below: 

\begin{table}[ht]
\newcommand{\tabincell}[2]{\begin{tabular}{@{}#1@{}}#2\end{tabular}}
\renewcommand\arraystretch{1.5} 
\centering
\begin{tabular}{|c|c|c|}
\hline
Values  of $g$   &  Values of $k$          &  Range of $m$   \\ \hline
$3$              &  $k=5 $   &  $m\geq 5$ \\ \hline
$4$              &  $k=5$     &  $m\geq 4$ \\ \hline
$5$              &  $k=3$    &  $m\geq 3$   \\ \hline
$6$              &  $k=4$      &  $m\geq 3$    \\ \hline
$7$              & $k=5$      &  $m\geq 3$     \\ \hline
$\geq 8$         &   $k=\min\{k_0~ |~k_0>\frac{g}{2}, \gcd(g-1,k_0)=1\} $   &  $m\geq 2$     \\ \hline
\end{tabular}
\caption{Choices of Mukai vectors}
\label{tab_mukai vect}
\end{table}
\noindent Note that when $g\geq 8$, we have $$\min\left\{k_0~ |~k_0>\frac{g}{2},\gcd(g-1,k_0)=1\right\} < g-2.$$ By a direct computation, one can easily see that the values of $k$ and $m$ in Table \ref{tab_mukai vect} satisfy the  conditions in Theorem \ref{thm_surj2}  and Theorem \ref{new-region-K3}. 

\item If $(g,m)=(7,2)$, Theorem \ref{thm_surj2} fails for all primitive Mukai vectors of the form $(6,k,k^2)$.  However, we can choose $v=(2,1,3)$ and the assertion can be concluded  by the following result.

\end{enumerate}

\begin{proposition}
Suppose $g=7$.  The restriction map $\psi:\bM(2,1,3)\to \BN_C(2,1,3)$ is an isomorphism for any irreducible curve $C\in |2H|$. 
\end{proposition}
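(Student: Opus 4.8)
The plan is to obtain the statement almost entirely from results already in place, the only genuinely new input being one delicate surjectivity check.

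\textbf{Injectivity and the reduction to surjectivity.} Here $g=7$, so $H^{2}=12$, and for $v=(2,1,3)$ we have $r=2$, $c=1$, $s=3$ and $v^{2}=2\cdot 6-2\cdot 3-3\cdot 2=0$. Since $r>c$, $\gcd(r,c)=1$, $r=2>1=\tfrac{v^{2}+2}{2}$ and $g-1=6\ge r$, condition \eqref{eq:int-inj} (hence \eqref{eq:inj-cond}--\eqref{eq:inj-cond2} with $m=2$) holds, so by Corollary~\ref{cor:inj num} the map $\psi\colon\bM(2,1,3)\to\BN_{C}(2,1,3)$ is a well-defined injective morphism with stable image. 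Next, $v=(r,c,ck)$ with $r=2$, $c=1$, $k=3$, and one checks $v^{2}=0$, $\gcd(r,c)=1$, $r\mid g-1$, $k\nmid g$, $0<k=3\le 3g-3=18$, and $m=2>1+\tfrac{ck}{r(k-1)}=\tfrac74$, so Theorem~\ref{new-region-K3} applies; it disposes of conditions (ii)--(iv) of Theorem~\ref{thm:iso}, in particular of $h^{0}(X,E)=r+s$ via $\gcd(r-s,c)=\gcd(-1,1)=1$. Thus it only remains to prove that $\psi$ is surjective.

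\textbf{Surjectivity: the set-up.} By Theorem~\ref{thm_polygons}, $\bP_{i_{*}F}\subseteq\bP_{v}$ for every $F\in\BN_{C}(v)$, with equality iff $F=E|_{C}$, so we must exclude $\bP_{i_{*}F}\subsetneq\bP_{v}$. Here $z_{1}=-1+\sqrt{-1}$, $z_{2}=24+4\sqrt{-1}$ and $\chi=\chi(i_{*}F)=m(g-1)(2c-mr)=-24$, so the quantity $\hbar$ of Theorem~\ref{thm:surj} equals $\sqrt{29^{2}+4\cdot 9}+\sqrt{5^{2}+4}=\sqrt{877}+\sqrt{29}$, and the slack $\hbar+\chi-2(r+s)=\sqrt{877}+\sqrt{29}-34$ is only about $1$: the crude estimate of Theorem~\ref{thm:surj} fails and one must run the sharpness analysis of Section~\ref{sec:surj2}. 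With $\gamma=\tfrac{mr}{c}-1=3$ one has $\tfrac{s-r}{c}+\tfrac{\gamma^{2}s-r}{\gamma c}=1+\tfrac{25}{3}\ge 8$, so by Lemma~\ref{lem_sharp} the polygon $\bP_{v}$ is $4$-sharp; note $c=1$ forces $z_{1}'=0$, $z_{1}^{+d}=(d-1)+\sqrt{-1}$ and $z_{2}'=\tfrac{22}{3}+2\sqrt{-1}$. Inspecting which integer points of $\bP_{v}$ lie outside $\bP_{0z_{1}'z_{1}^{+4}z_{2}'z_{2}}$ (only $z_{1}$ and $z_{1}^{+1},z_{1}^{+2},z_{1}^{+3}$ do), one concludes that if $\bP_{i_{*}F}\subsetneq\bP_{v}$ then either (a) $\bP_{i_{*}F}\subseteq\bP_{0z_{1}'z_{1}^{+4}z_{2}'z_{2}}$, or (b) some height-$1$ point among $z_{1},z_{1}^{+1},z_{1}^{+2},z_{1}^{+3}$ is a vertex of $\bP_{i_{*}F}$. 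Since $i_{*}F$ has no subobject in $\Coh^{0}(X)$ with vanishing $c_{1}$, every $\overline\sigma$-semistable HN factor has $c_{1}\ge 1$, so a height-$1$ vertex must be $\overline Z(E_{1})$, forcing $v(E_{1})=(r_{1},1,s_{1})$; and $\overline Z(E_{1})=z_{1}$ is impossible as it forces $v(E_{1})=v$, whence $\bP_{i_{*}F}=\bP_{v}$.

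\textbf{Surjectivity: the two cases, and the main obstacle.} In case (a) one bounds $\|i_{*}F\|$ by the $\|\cdot\|$-length (minus the common base $[0,z_{2}]$) of the convex hull of the integer points of $\bP_{0z_{1}^{+4}z_{2}'z_{2}}$, namely the chain $0\to(3,1)\to(8,2)\to(24,4)$, giving $\|i_{*}F\|\le\sqrt{37}+\sqrt{53}+\sqrt{368}<34$; then Proposition~\ref{prop_upper bound} yields $h^{0}(C,F)=h^{0}(X,i_{*}F)\le\tfrac{\|i_{*}F\|+\chi}{2}<5=r+s$, contradicting the defining inequality of $\BN_{C}(v)$. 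In case (b), with $v(E_{1})=(r_{1},1,s_{1})$ and $r_{1}-s_{1}=j-1\in\{1,2,3\}$, the constraint $v(E_{1})^{2}\ge -2$ (applied to a JH factor if necessary) leaves only finitely many possibilities for $v(E_{1})$; one then splits the fine bound of Proposition~\ref{prop_upper bound} at $\widetilde E_{1}$, estimates $\sum_{i\ge 2}\lfloor\tfrac{\ell(E_{i})+\chi(E_{i})}{2}\rfloor\le\tfrac12\bigl(\|i_{*}F/\widetilde E_{1}\|+\chi(i_{*}F/\widetilde E_{1})\bigr)$ using again the convex hull inside $\bP_{v}$, and finds $h^{0}(X,i_{*}F)\le 4<5$ in every case. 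The hard part is exactly this last step: because $c=1$, the usual gain $\ell(E_{i})<\|E_{i}\|$ for the extreme factors \emph{vanishes} for this $v$ (for instance $\ell\bigl((j-1)+\sqrt{-1}\bigr)=\|(j-1)+\sqrt{-1}\|$), so none of the slack comes from the $\ell$-versus-$\|\cdot\|$ comparison and it must all be squeezed out of the integrality of the floor in Proposition~\ref{prop_upper bound}; this forces one to carry the (finite) case analysis above through to the end rather than quoting the generic formulas of Section~\ref{sec:surj2}. Alternatively, this single triple $(g,m,v)=(7,2,(2,1,3))$ is directly amenable to verification by the computer program of \cite[Section~4]{Feyzbakhsh2020+}.
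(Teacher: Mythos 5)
Your argument follows the paper's own route almost step for step: injectivity via Corollary~\ref{cor:inj num}, reduction to surjectivity via Theorem~\ref{new-region-K3}, and surjectivity via the sharpness analysis of Section~\ref{sec:surj2} specialised to $(g,m,v)=(7,2,(2,1,3))$. The only real divergence is the choice of sharpness parameter. The paper works with $d=2$ (which already holds since $\tfrac{s-r}{c}+\tfrac{\gamma^2 s-r}{\gamma c}=\tfrac{28}{3}\ge 4$): the containment case is killed by $\hbar-\|i_*F\|>\tfrac{1}{3}\bigl(\sqrt{877}-\sqrt{613}\bigr)\approx 1.62>\sqrt{877}+\sqrt{29}-34\approx 1.00$, and then only the single vertex $z_1^{+1}$ must be excluded, which the paper does explicitly using $\lfloor\ell(z_1^{+1})/2\rfloor=\lfloor\sqrt{7}\rfloor=2$ together with the coarse bound along $z_1^{+1}\to z_2'\to z_2$. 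You instead take $d=4$, which makes the containment case very comfortable (your convex-hull chain $0\to(3,1)\to(8,2)\to(24,4)$ and the estimate $\sqrt{37}+\sqrt{53}+\sqrt{368}<34$ are correct), but it leaves you three vertex subcases $z_1^{+1},z_1^{+2},z_1^{+3}$ instead of one.

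The weakness is that you never actually carry out those three subcases: you set up the splitting of the fine bound at $\widetilde E_1$ and then assert ``$h^0\le 4<5$ in every case,'' explicitly calling this the hard part and falling back on the computer program. Since the whole difficulty of this proposition is that the slack is only about $1$ and (as you correctly observe) must be extracted from the floor function rather than from $\|\cdot\|-\ell$, this is precisely the computation that cannot be waved at. For the record the numbers do close: for $z_1^{+1}$ one gets $h^0\le 2+\tfrac12\bigl(\tfrac{4\sqrt{46}}{3}+\tfrac{2\sqrt{877}}{3}-24\bigr)<5$, and the cases $z_1^{+2},z_1^{+3}$ are looser still. Note also that enumerating the finitely many $v(E_1)$ with $v(E_1)^2\ge -2$ is unnecessary: the bound $\lfloor(\ell(E_1)+\chi(E_1))/2\rfloor$ depends only on $\overline Z(E_1)=z_1^{+j}$, so there are exactly three computations to do, and choosing $d=2$ as the paper does would have reduced you to the single one the paper writes out.
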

\begin{proof}
Note that $v$ satisfies the hypothesis in Corollary \ref{cor:inj num}, we know that $\psi$ is an injective morphism with stable image.   Due to Theorem~\ref{new-region-K3}, it suffices to show that $\psi$ is surjective. Suppose $\bP_v\neq \bP_{i_\ast F}$ for some $F\in \BN_C(v)$.  
A direct computation shows $\bP_v$ is at least $2$-sharp. Then either $\bP_{i_*F}$ lies inside the polygon $\bP_{0 z_1^{+2} z'_2 z_2}$, or it has $z_1^{+1}$ as a vertex. For the first case, one has
\begin{align*}
\hbar-  \|i_\ast F\| 
    &>  \|z_1- z'_1\|-\|z'_1- z_1^{+2}\|+ \|z_1- z'_2\|-\|z'_2- z_1^{+2}\| \\
    &=\frac{\sqrt{877}}{3}-\frac{\sqrt{613}}{3} \\&> \sqrt{29}+\sqrt{877}-34 ={\hbar+ \chi} - 10 
\end{align*}
which contradicts to \eqref{eq:surjective}. For the second case, it forces $\overline Z(\widetilde{E}_1)=z_1^{+1}$ and hence
\begin{equation*}
    5 \leq h^0(C, F)= h^0(X,i_*F) \leq \lfloor \frac{\ell( z_1^{+1})}{2} \rfloor+ \frac{\|z_1^{+1}- z'_2\|+\|z'_2 -z_2\|+\chi}{2} \leq \frac{\hbar+\chi}{2}.
\end{equation*}
However, we have 
\begin{equation*}
\begin{aligned}
    \frac{\hbar-(\|z_1^{+1} -z'_2\|+\|z'_2- z_2\|)}{2}-\lfloor \frac{\ell( z_1^{+1})}{2} \rfloor&= \frac{1}{6} \left(\sqrt{877}-4 \sqrt{46}\right)+\frac{\sqrt{7}}{2}-1 \\&>  \frac{1}{2} \left(\sqrt{29}+\sqrt{877}-34\right)\\&= \frac{\hbar+\chi}{2}-5
\end{aligned}
 \end{equation*}
which is impossible.  It follows from Theorem \ref{thm_polygons} that  $\psi$ is surjective. 
\end{proof}

Now we prove  Theorem \ref{mainthm2}. As in the proof of Theorem \ref{mainthm},  for each $n>0$,  we need to find Mukai vectors $v$ with $v^2=2n$ satisfying the assumptions in Corollary \ref{cor:surj2}. A key tool is  
\begin{lemma}\label{bounded-prime}
For each $n$, there is an integer $N=N(n)$ such that for $g>N$, one can find a prime number $p$ satisfying that 
\begin{enumerate}
    \item $n+1<p<\frac{\sqrt{g-1}}{2}$
    \item the equation $x^2\equiv(g-1)n \mod p$ has a solution.  
\end{enumerate}
\end{lemma}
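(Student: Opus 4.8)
The plan is to use an effective version of the existence of primes in short intervals together with an averaging/counting argument over primes to control the density of primes modulo which $(g-1)n$ is a quadratic residue. First I would fix $n$ and observe that the two required conditions decouple: condition (i) merely asks for a prime in the window $\bigl(n+1,\tfrac{\sqrt{g-1}}{2}\bigr)$, and condition (ii) is a quadratic-residue condition modulo that prime. The window has length roughly $\tfrac{\sqrt{g-1}}{2}$, which tends to infinity with $g$, so for $g$ large there are \emph{many} primes in it --- by the prime number theorem (or even by Bertrand-type bounds applied iteratively) the number of such primes is asymptotically $\tfrac{\sqrt{g-1}}{\log g}$, hence eventually far larger than any fixed threshold. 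So the real content is showing that among these primes at least one makes $(g-1)n$ a quadratic residue.

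For condition (ii), I would argue by contradiction: suppose that for \emph{every} prime $p$ with $n+1<p<\tfrac{\sqrt{g-1}}{2}$ the congruence $x^2\equiv (g-1)n\pmod p$ has no solution, i.e.\ $(g-1)n$ is a quadratic \emph{nonresidue} mod $p$ (note $p\nmid (g-1)n$ is automatic once $p>n$ and $p>$ the relevant prime factors of $g-1$, which I would handle by first discarding the finitely many primes dividing $n$ and noting that a prime dividing $g-1$ and lying below $\sqrt{g-1}/2$ can occur but then $(g-1)n\equiv 0$, so such $p$ trivially satisfies (ii) and we would be done --- hence we may assume $p\nmid(g-1)n$). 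Then by multiplicativity of the Legendre symbol, writing $m_0$ for the squarefree part of $(g-1)n$, the symbol $\left(\tfrac{m_0}{p}\right)=-1$ for all primes $p$ in the window. This says $m_0$ is a \emph{prime nonresidue obstruction} simultaneously at all small primes, which forces the smallest prime nonresidue modulo $m_0$ (or rather the structure of the set of $p$ for which $m_0$ is a nonresidue) to be incompatible with known bounds. Concretely, the set of primes $p$ for which $\left(\tfrac{m_0}{p}\right)=1$ has density $\tfrac12$ among all primes by quadratic reciprocity and Dirichlet/Chebotarev; more effectively, by the Pólya--Vinogradov inequality the least prime $p$ with $\left(\tfrac{m_0}{p}\right)=1$ is $O_\varepsilon\bigl(m_0^{1/(4\sqrt e)+\varepsilon}\bigr)$, and $m_0\le (g-1)n$, so this least prime is $\ll ((g-1)n)^{1/(4\sqrt e)+\varepsilon}=o(\sqrt{g-1})$ as $g\to\infty$ with $n$ fixed. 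Once $g$ is large enough that this bound drops below $\tfrac{\sqrt{g-1}}{2}$ (and the least such prime also exceeds $n+1$ --- if it does not, we simply take the next one; there are infinitely many and they have positive density, so some one of them lands in $(n+1,\sqrt{g-1}/2)$), we obtain the desired prime, contradicting the assumption. Thus $N(n)$ can be taken as the threshold beyond which all these asymptotic inequalities hold.

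The main obstacle is making the final counting step fully rigorous: it is not enough that \emph{some} prime $p$ has $\left(\tfrac{m_0}{p}\right)=1$ below $\sqrt{g-1}/2$; we need one lying strictly \emph{above} $n+1$ as well. This is where one invokes that the primes $p$ with $\left(\tfrac{m_0}{p}\right)=1$ form a set of relative density $\tfrac12$ (by reciprocity this is a congruence condition on $p$ modulo $4m_0$, together with a splitting condition, so Dirichlet's theorem on primes in arithmetic progressions gives infinitely many with positive density $\tfrac12 \cdot \tfrac{1}{\phi(4m_0)}$-ish), and then an effective Brun--Titchmarsh or explicit prime-counting estimate shows that in the window $(n+1,\sqrt{g-1}/2)$ --- whose length grows --- the count of such primes is $\gg \tfrac{\sqrt{g-1}}{m_0\log g}$, which is positive and in fact unbounded for $g$ large since $n$, hence $m_0\le (g-1)n$... wait, $m_0$ itself grows with $g$; so one must instead use the \emph{least} nonresidue-type bound above rather than a density argument near the top of the range. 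I would therefore structure the proof around the Pólya--Vinogradov / Burgess bound for the least prime with prescribed quadratic character, applied to the (varying but controlled) modulus $m_0 \mid (g-1)n$, checking that the implied constant is uniform enough that $N(n)$ depends only on $n$. A cleaner alternative I would also consider is to fix the quadratic residue condition by choosing $g$ in a prescribed residue class --- but since $g$ is given, not chosen, the least-nonresidue route is the one to pursue.

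\begin{remark}
In the actual application only the existence of $N(n)$ matters, not its size, so one has considerable freedom: one may even avoid Burgess entirely and use the classical fact that the set of primes splitting in $\QQ(\sqrt{m_0})$ has natural density $\tfrac12$ (Chebotarev for the quadratic field), combined with the divergence of $\sum_{p}\tfrac1p$ restricted to such primes, to conclude that for $g$ large the window $(n+1,\sqrt{g-1}/2)$ must contain at least one of them; the only care needed is the mild non-uniformity coming from $m_0$ depending on $g$, which is absorbed because $m_0 \le (g-1)n$ grows only polynomially while the relevant counting function grows like $\sqrt{g-1}/\log g$ divided by a quantity of size $O(\log m_0)=O(\log g)$.
\end{remark}
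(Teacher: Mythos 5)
There is a genuine gap, and it is exactly at the point you yourself flag but do not resolve: guaranteeing a prime quadratic residue that lies \emph{above} $n+1$ as well as below $\sqrt{g-1}/2$. A bound on the \emph{least} prime $p$ with $\left(\tfrac{m_0}{p}\right)=1$ cannot do this, since that least prime may well be $2$ or $3$. Your two proposed patches both fail for the same reason: the modulus $m_0$ (or $4m_0$) has size comparable to $(g-1)n$, while the top of the window is only about $m_0^{1/2}$, which is far below the range where Dirichlet/Chebotarev density statements (or the divergence of $\sum_p 1/p$ over split primes) become effective unconditionally — Siegel--Walfisz requires the modulus to be at most a power of $\log x$, and Linnik-type bounds give only $q^{L}$ with $L>1$, both hopeless here. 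An unconditional estimate for $\sum_{p\le x}\chi(p)$ with $x\approx q^{1/2}$ is likewise unavailable. So the "take the next one, they have positive density" step is not justified, and the Remark's Chebotarev variant inherits the same non-uniformity problem; the dependence of $m_0$ on $g$ is not "mild" at this scale. (A secondary inaccuracy: the exponent $1/(4\sqrt e)$ from Burgess pertains to the least prime \emph{nonresidue}; the least prime \emph{residue} is a different and harder problem, though any exponent below $1/2$ would have sufficed.)

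What is actually needed — and what the paper uses — is a quantitative result producing \emph{many} small prime residues simultaneously: the theorem of Pollack cited as \cite[Theorem 1.4]{Po17} guarantees that for every sufficiently large $j$ and every quadratic character $\chi$ modulo $j$ there are at least $\log j$ primes $\ell\le \sqrt[3]{j}$ with $\chi(\ell)=1$. The paper builds, via quadratic reciprocity, a quadratic character $\chi$ modulo $8(g-1)n$ with $\chi(p)=\left(\tfrac{(g-1)n}{p}\right)$, applies this theorem, and then chooses $N(n)$ so large that $\log(8(g-1)n)$ exceeds the number of primes up to $n+1$ and $\sqrt[3]{8(g-1)n}\le \sqrt{g-1}/2$; discarding the few primes $\le n+1$ leaves at least one prime in the required window. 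Your reduction at the start (discarding primes dividing $(g-1)n$, for which $x\equiv 0$ solves the congruence trivially) is a correct and even mildly simplifying observation, but without a "many prime residues up to $q^{c}$ with $c<1/2$, uniformly in $q$" input, the argument does not close.
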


\begin{proof}
The idea is to use the bound for prime character nonresidues. 
In \cite[Theorem 1.4]{Po17}, it has been proved  that there exists an integer $m_0$ with the property: if $j > j_0$ and $\chi$ is a quadratic character modulo $j$, there are at least $\log(j)$ primes $\ell \leq \sqrt[3]{j}$ with $\chi(\ell)= 1$. Choose $N$ to be the minimal integer satisfying \begin{itemize}
    \item $8(N-1)n\geq  j_0$,
    \item the $\lfloor\log (8(N-1)n)\rfloor$-th prime number $>n+1$,
    \item $\sqrt[3]{8(N-1)n}\leq \frac{\sqrt{N-1}}{2}$.
\end{itemize} Clearly, it only depends on $n$.  For $g>N$, we write $$g(n-1)=a^2\prod_{i=1}^k q_i ,$$
where  $q_i$ are distinct primes.  
Let $\chi_i$ be the character defined by 
$$ \chi_i(d)= \left( \frac{d }{q_i}\right)(-1)^{\frac{(d-1)(q_i-1)}{4}} $$
if $q_i$ is odd and $\chi_i(d)=(-1)^{\frac{d^2-1}{8}}$ if $q_i=2$.  Consider the quadratic character
$$\chi(d)=\prod_{i=1}^k \chi_i(d) $$
modulo $8(g-1)n$. As $8(g-1)n>N\geq j_0$, there exists a prime $p$ such that $\chi(p)=1$ and $$n+1<p<\sqrt[3]{8(g-1)n}\leq\frac{ \sqrt{g-1}}{2}.$$ Then we have its Jacobi symbol is 
$$\left( \frac{g(n-1)}{p}\right)=\prod_{i=1}^k \left( \frac{q_i }{p}\right)=\prod_{i=1}^k \chi_i(p) =\chi(p)=1$$
by the law of reciprocity. It follows that  $x^2=g(n-1)\mod p$ has a solution. 
\end{proof}

Due to Lemma \ref{bounded-prime}, when $g>N(n)$, we can find an odd prime $p$ and an integer $0<c<p$ satisfying
\begin{center}
    $n+1<p<\frac{\sqrt{g-1}}{2}$ and $p$ divides $c^2(g-1)-n$.
\end{center}
Choose the Mukai vector $v=(p,c, \frac{c^2(g-1)-n}{p})$, it automatically satisfies  all assumptions in Corollary \ref{cor:surj2}. The assertion then follows immediately.

\bibliographystyle{plain}
\bibliography{main}
\end{document}